\def\R{\textrm{I\kern-0.21emR}}
\def\N{\textrm{I\kern-0.21emN}}
\renewcommand{\geq}{\geqslant}
\renewcommand{\leq}{\leqslant}
\renewcommand{\geq}{\geqslant}
\renewcommand{\leq}{\leqslant}
\newcommand {\de} {\delta}
\newcommand {\Chi} {{\bf \raise 2pt \hbox{$\chi$}} }
\newcommand {\f}   {\frac}
\newcommand {\ddt}   { \frac{d}{dt}}  
\newcommand  \ind[1]  {   \mathds{1}_{#1}   }
\newcommand{\re}{\eqref}
\newcommand{\la}{\label}
\newcommand{\beq}{\begin{equation}}
\newcommand{\eeq}{\end{equation}}
\newcommand{\bea} {\begin{array}{rl}}
\newcommand{\eea} {\end{array}}
\newcommand{\bepa}{\left\{ \begin{array}{l}}
\newcommand{\eepa} {\end{array}\right.}
\DeclareMathOperator*{\amax}{arg\,max}
\newcommand{\bmu}{\begin{multline}}
\newcommand{\emu}{\end{multline}}
\newtheorem{theorem}{Theorem}  
\newtheorem{proposition}{Proposition}
\newtheorem{corollary}{Corollary}
\newtheorem{lemma}{Lemma}
\theoremstyle{definition}\newtheorem{remark}{Remark}
\title{Asymptotic analysis and optimal control of an integro-differential system modelling healthy and cancer cells exposed to chemotherapy}
\author{Camille Pouchol\thanks{Sorbonne Universit\'es, UPMC Univ Paris 06, CNRS UMR 7598, Laboratoire Jacques-Louis Lions, F-75005, Paris, France} \thanks{INRIA Team Mamba, INRIA Paris, 2 rue Simone Iff, CS 42112, 75589 Paris, France} \footnotemark[4]\; \, Jean Clairambault\footnotemark[2] \footnotemark[1] \footnotemark[4]\; \, Alexander Lorz\thanks{CEMSE Division, King Abdullah University of Science and Technology, Thuwal 23955-6900, Saudi Arabia} \footnotemark[1] \footnotemark[2] \footnotemark[4] \; Emmanuel Tr\'elat\footnotemark[1] \thanks{e-mail: \href{mailto:pouchol@ljll.math.upmc.fr}{pouchol@ljll.math.upmc.fr} (corresponding author), \href{mailto:jean.clairambault@inria.fr}{jean.clairambault@inria.fr}, \href{mailto:alexander.lorz.1@kaust.edu.sa}{alexander.lorz.1@kaust.edu.sa}, \; \href{mailto:emmanuel.trelat@upmc.fr}{emmanuel.trelat@upmc.fr}}}
\begin{document}

\newcounter{assum}

\maketitle

\begin{abstract}
We consider a system of two coupled integro-differential equations modelling populations of healthy and cancer cells under therapy. Both populations are structured by a phenotypic variable, representing their level of resistance to the treatment. We analyse the asymptotic behaviour of the model under constant infusion of drugs. By designing an appropriate Lyapunov function, we prove that both densities converge to Dirac masses.
We then define an optimal control problem, by considering all possible infusion protocols and minimising the number of cancer cells over a prescribed time frame. We provide a quasi-optimal strategy and prove that it solves this problem for large final times. For this modelling framework, we illustrate our results with numerical simulations, and compare our optimal strategy with periodic treatment schedules.
\end{abstract}

\section{Introduction}
\label{Section1}
One of the primary causes of death worldwide is cancer~\cite{jemal2016}. Cancer treatment encounters two main pitfalls: the emergence of drug resistance in cancer cells and toxic side effects to healthy cells. Given these causes of treatment failure, designing optimized therapeutic strategies  is a major objective for oncologists. In this paper, we propose a mathematical framework for modelling these phenomena and optimally combining therapies. 

\subsection{Overview and motivation} 

The most frequently used class of anti-cancer drugs are chemotherapeutic (cytotoxic) drugs, which are toxic to cells, leading to cell death. For example, platinum-based agents kill dividing cells by causing DNA damage and disrupting DNA replication~\cite{Kelland2007}. Another class of drugs are cytostatic drugs, which slow down cell proliferation without killing cells. For example, trastuzumab is a cytostatic drug used in breast cancer treatment that targets growth factor receptors present on the surface of cells, and inhibits their proliferation~\cite{Gabriel2005}. Despite this obvious functional difference between the two classes, cytostatic drugs, such as tyrosine kinase inhibitors, can also be cytotoxic at high doses~\cite{Rixe2007}.  

It is a well documented fact that cytotoxic agents can fail to control cancer growth and relapse~\cite{Hanahan2000, Pasquier2010, Scharovsky2009}.  First, eradication of the tumour cell population is compromised by the emergence of drug resistance, due to intrinsic or acquired genotypic and phenotypic heterogeneity in the cancer cell population~\cite{bedard2013, burrell2013, greaves2015, navin2010}, because a subpopulation of resistant cells survives and proliferates, even in the presence of further treatment with identical~\cite{sharma2010}, or higher doses~\cite{pisco2015}. Second, chemotherapeutic treatments have unwanted side effects on healthy cells, which precludes unconstrained treatment use for fear of unwanted toxicities to major organs. It is therefore a challenge for oncologists to optimally and safely treat patients with chemotherapy. \par

The medical objective of killing cancer cells together with preserving healthy cells from excessive toxicity is routinely translated in mathematical terms as finding the best therapeutic strategies (\textit{i.e.}, below some maximum tolerated dose, referred to as MTD) in order to minimise an appropriately chosen cost function. There are many works in mathematical oncology focusing on the optimal modulation of chemotherapeutic doses and schedules designed to control cancer growth, {\it e.g.}~\cite{agur2006, costa1992, costa1994, kimmelswierniak2006, ledzewicz2006, ledzewicz2007a, ledzewiczschattler2008, ledzewiczMBE2011, swan1977, swanbook1984, swan1990}. \par

Since using ordinary differential equations (ODEs) is a common technique for modelling the temporal dynamics of cell populations, the mathematical field of optimal control applied to ODEs has emerged as an important tool to tackle such questions (see for instance~\cite{Schaettler2015} for a complete presentation).
In these ODE models, toxicity can either be incorporated in the cost functional as in~\cite{costa1994}, or by adding the dynamics of the healthy cells~\cite{billy2013c}. One simple, but rather coarse, paradigm used to represent drug resistance in such ODE models is by distinguishing between sensitive and resistant cancer cell subpopulations~\cite{costa1994,ledzewicz2006}. Herein, the main tools available to obtain rigorous results are the Pontryagin maximum principle (PMP) and geometric optimal control techniques~\cite{Agrachev2004, Pontryagin1964,Schaettler2012,Trelat2008}. \par

Another paradigm used in the mathematical modelling of drug resistance relies on the idea that \textit{phenotypic heterogeneity} in cancer cells and the dynamics of cancer cell populations can be understood through the principles of Darwinian evolution~\cite{greaves2010, greaves2012}.  Given a particular tumor micro- and macro-environment (\textit{e.g.}, access to oxygen, nutrients, growth factors, drug exposure), the fittest cells are selected. In the case of resistance, resistant cell subpopulations are assumed to emerge and be selected for their high levels of fitness in the presence of chemotherapeutic agents. Whether they already exist in the cell population, surviving and remaining dormant at clinically undetectable small numbers, and emerging only by natural selection, or they do not exist at all initially, but emerge as a result of an evolutionary trade-off between proliferation and development of costly survival mechanisms~\cite{aktipis2013}, likely of epigenetic nature, is still difficult to decide. The two scenarios have been studied in a modelling framework in~\cite{Chisholm2015}. \par

Adaptive dynamics is a branch of mathematical biology that aims at modelling Darwinian selection~\cite{Diekmann2004, Diekmann2005}. It is thus a natural theoretical framework for the representation of phenotypic evolution in proliferating cell populations exposed to anti-cancer drugs and tumor micro-environmental factors. Non-Darwinian evolutionary principles have also been proposed to take into account drug resistance phenomena~\cite{pisco2013}. Adaptive dynamics is amenable to modelling these principles as well. To this end, stochastic or game-theoretic points of view (see~\cite{Champagnat2008, Hofbauer1990}) are standard in adaptive dynamics. Apart from ODEs, partial differential equations (PDEs) and integro-differential equations (IDEs) represent other deterministic approaches. The latter ones represent the focus of our paper. For an introduction to PDE and IDE models in adaptive dynamics, we refer the interested reader to~\cite{Perthame2006,Lorz2011}. \par 

A common feature of these modelling techniques is that the population is structured by a trait, referred to as \textit{phenotype}. The resistance level of a cell to a drug therapy is an example of such trait. Often, this variable is assumed to be \textit{continuous} since it is correlated with biological characteristics, \textit{e.g}., the intracellular concentration of a detoxication molecule (such as reduced glutathione), the activity of detoxifying enzymes in metabolising the administered drug, or drug efflux transporters eliminating the drug. Another possible continuous structuring variable is the ability of some cancer cells to quickly change their phenotypes (otherwise said, their intrinsic {\it plasticity}) by regulating the level of DNA methylation and/or of activity of DNA methyltransferases~\cite{Chisholm2016, sandoval2012}. This ability is also correlated with the degree of resistance to a given drug. \par

To this end, a relevant modelling alternative to the binary sensitive versus resistant ODE framework (as already proposed long ago in e.g.,~\cite{costa1992, costa1994}) consists of studying the cells at the population level using structured population dynamics. Specifically, let us denote the density of cells at time $t$ and with phenotype $x$ by $n(t,x)$, with $x\in{[0,1]}$. The continuous phenotype $x$ represents an abstract level of resistance (which may be molecularly related to the activity level of an ABC transporter, or to a mean level of methylation of the DNA) to a cytotoxic drug in a cell population. Such models allow for the analysis of the asymptotic behaviour in terms of an asymptotically selected  phenotype  and of the total population $\rho(t) := \int_0^1 n(t,x) \, dx$. In the classical non-local logistic model, written as the IDE
\begin{equation*}
     \dfrac{\partial n}{\partial t} (t,x) 
        							=  \big(r(x) - d(x) \rho(t) \big) n(t,x),
\end{equation*}
where $\rho(t) := \int_0^1 n(t,x) \, dx$, cells proliferate at rate $r(x)$ and die at rate $d(x) \rho(t)$ (the more individuals, the more competition and thus death). Such equations have well known asymptotic properties, such as \textit{convergence} and \textit{concentration}~\cite{Perthame2006, Desvillettes2008, Jabin2011}. For large times,  $\rho$ converges to the smallest value $\rho^\infty$ such that $r - d \rho^\infty\leq 0$ on $[0,1]$ and $n(t,\cdot)$ concentrates on the set of points such that  $r(x) -d(x) \rho^\infty=0$. The limit is thus typically expected to be a sum of Dirac masses. This phenomenon can be interpreted as the \textit{selection} of dominant traits by the environment. 
A common strategy for proving this asymptotic behaviour consists in showing that $\rho$ has a bounded variation ($BV$) on $[0,+\infty)$, as in~\cite{Lorz2011, Lorz2014}. \par 

To the best of our knowledge, general results of convergence and concentration are still elusive for systems of IDEs: the methods used in the $BV$ framework do not seem to generalise, even in the setting that is of special interest to us, namely in the case of two competitively interacting populations of (healthy and cancer) cells. This leads to the asymptotic analysis of systems of the form
\begin{equation}
\label{2x2}
\begin{split}
     \dfrac{\partial n_1}{\partial t} (t,x)&= \big(r_1(x) - d_1(x) I_1(t) \big) n_1(t,x), \\
     \dfrac{\partial n_2}{\partial t} (t,x)&= \big(r_2(x) - d_2(x) I_2(t) \big) n_2(t,x).
\end{split}
\end{equation}
The competitive coupling comes from $I_1=a_{11} \rho_1 + a_{12}\rho_2$, $I_2=a_{22} \rho_2 + a_{21}\rho_1$ with $\rho_i(t)= \int_0^1 n_i(t,x) \, dx$, $i=1,2$. In particular, it is not clear \textit{a priori} whether such interactions may or may not lead to \textit{oscillatory} behaviours at the level of $\rho_1$, $\rho_2$. We mention the work~\cite{Busse2016} where convergence and concentration are completely characterized for a 2x2 system, where a triangular coupling structure is considered. Due to an appropriately designed Lyapunov function, the results of our paper imply that convergence and concentration hold for the model \eqref{2x2}. \par
Our goal here is also to include control terms in order to model the effect of the drugs on the proliferation and death rates. The equations we will use henceforth throughout this paper are
\begin{equation}
\label{2x2Control}
\begin{split}
     \dfrac{\partial n_1}{\partial t} (t,x)&= \left(\frac{r_1(x)} {1+\alpha_1 v(t)} - d_1(x) I_1(t)-\mu_1(x) u(t) \right) n_1(t,x),  \\
     \dfrac{\partial n_2}{\partial t} (t,x)&= \left(\frac{r_2(x)} {1+\alpha_2 v(t)} - d_2(x) I_2(t)-\mu_2(x) u(t) \right) n_2(t,x).  
\end{split}
\end{equation}
Here the asymptotic analysis is more complex because of the controls $u$ and $v$. On a fixed time-frame $(0,T)$, we will search among controls $u$, $v$ in $BV(0,T)$, since it would not biologically feasible to impose fast varying drug infusion rates to patients.
\par
The main difference between the approach developed in this paper and the ODE ones is that the model considered throughout is an IDE approach studied from an optimal control perspective. This model is inspired by~\cite{Lorz2013a, Lorz2015} where an IDE model (possibly with an additional space variable) has been used in order to model the effect of constant doses of cytotoxic and cytostatic drug chemotherapies. \par
However, a mathematical proof of the failure of chemotherapeutic treatments at MTD levels to eradicate cancer cell population has not yet been formulated in the IDE modelling framework, a result that we will obtain in the present study. In addition, previous works have only considered a priori prescribed drug treatment schedules. \par 

In this paper, we show that our model (\ref{2x2Control}) is consistent with clinical observations on the effect of constant infusion of high doses, and we address the optimal control problem of such IDE models. Our study has a potential impact for oncologists and mathematical biologists, since it provides an accurate and robust understanding of possible optimal strategies. Up to our knowledge, this is the first time that a mathematical model and its optimal control reflect the emerging fact, observed and acknowledged recently by many clinicians, that giving maximal tolerated drug doses, even periodically, may finally be detrimental and in any case is far from being an optimal strategy in view of curing cancer.

\subsection{Modelling and overview of the main results}
We consider both the healthy and cancer cell populations, modelled by their respective densities $n_H(t,x)$ and $n_C(t,x)$, where the variable $x\in{[0,1]}$, called \textit{phenotype}, represents drug resistance levels: a cell of phenotype $x$ is highly sensitive if $x$ is close to $0$, and is highly resistant if $x$ is close to $1$. Chemotherapy is modelled by two functions of time $u_1$ and $u_2$, representing cytotoxic and cytostatic drug infusion flows, respectively, in the two cell populations. 
These functions are the controls, assumed to be bounded variation functions of time if the equation is set on a bounded time-frame, subject to maximum tolerated doses (MTD) thresholds:
\begin{equation}\label{u_max}
0\leq u_1(t)\leq u_1^{\max},\qquad 0\leq u_2(t)\leq u_2^{\max}.
\end{equation}
We assume that the densities $n_H$ and $n_C$ satisfy the following lDE system:
\begin{equation}\label{contsyst}
\begin{split}
\dfrac{\partial n_H}{\partial t} (t,x) &= R_H\left(x, \rho_H(t), \rho_C(t),u_1(t),u_2(t)\right)
n_H(t,x), \\ \vspace{.8em}
\dfrac{\partial n_C}{\partial t} (t,x)& = 
R_C\left(x, \rho_C(t), \rho_H(t), u_1(t), u_2(t)\right) n_C(t,x), 
\end{split}
\end{equation}
with the net growth rates defined as 
\begin{equation}\label{FunctionsR}
\begin{split}
R_H(x,\rho_H,\rho_C,u_1,u_2) &:= \frac{r_H(x)}{1+\alpha_H u_2} - d_H(x) I_H - u_1 \mu_H(x), \\ \vspace{.8em}
R_C(x,\rho_C, \rho_H, u_1,u_2) &:= \frac{r_C(x)}{1+\alpha_C u_2} - d_C(x) I_C - u_1 \mu_C(x), \vspace{.8em}
\end{split}
\end{equation}
the non-local coupling as
\begin{equation}
\label{Logistic}
I_H :=  a_{HH} \rho_H + a_{HC} \rho_C, \; \; I_C := a_{CH} \rho_H + a_{CC} \rho_C,
\end{equation}
with
\begin{equation*}
\rho_H(t) = \int_0^1 n_H(t,x)\, dx,\; \;
\rho_C(t) = \int_0^1 n_C(t,x)\, dx,
\end{equation*}
which are the total number of healthy and tumour cells at time $t$. \par 
The system starts from the initial conditions
\begin{equation}\label{initial}
n_H(0,x) = n_H^0(x), \; \; 
n_C(0,x) = n_C^0(x).
\end{equation}
\medskip
In the above model: \par
\noindent
$\bullet$ \,
$r_H$ and $r_C$ are the drug-free proliferation rates, assumed to be positive, decreasing functions on $[0,1]$. \\
$\bullet$ \,
The factors $\f{1}{1+\alpha_H u_2(t)}$ and $\f{1}{1+\alpha_C u_2(t)}$ model the decrease in proliferation rates due to cytostatic drugs. The positive constants $\alpha_H$ and $\alpha_C$  represent average sensitivities of cells to cytostatic drugs. Throughout, we make the assumption that cancer cells are more sensitive to the  drugs, \textit{i.e.},
\begin{equation}
\begin{split}
\label{AssumptionAlpha}
\alpha_H < \alpha _C.
\end{split}
\end{equation}
$\bullet$ \,
The terms $d_H\, I_H$ and $d_C\, I_C$ are the drug-free death rates. The functions $d_H$ and $d_C$ are positive, decreasing functions on $[0,1]$. Given the dependence in $I_H$ and $I_C$, the model resembles a logistic one. According to their definitions, these functions  are linear combinations of the total population $\rho_H$ and $\rho_C$, \textit{i.e.}, we consider both intraspecific and interspecific competition. We assume that the intraspecific competition is stronger than the interspecific one:
\begin{equation}
\label{Competition}
0<a_{HC}<a_{HH}, \; \;
0<a_{CH}<a_{CC}.
\end{equation}
$\bullet$ \,
The terms $u_1(t) \mu_H(x)$ and $u_1(t) \mu_C(x)$ are additional death rates due to cytotoxic drugs, with $\mu_H$ and $\mu_C$ assumed to be non-negative, decreasing functions on $[0,1]$. These functions may vanish on some interval $[1-\epsilon, 1]$, which in this case reflects the fact that some cells become fully resistant to those drugs. \par
$\bullet$ \,
This model imposes that the phenotype $x$ be mostly linked to the resistance to cytotoxic drugs, in accordance with ~\cite{pisco2015}. This fact which will be made clearer by the analysis of the model (see below among the consequences of Theorem \ref{thm_asympt}). 
\paragraph{Asymptotic behaviour for controls in $BV\left([0,+\infty)\right)$.} 
Our first aim is to show that our model reproduces the following clinical observations: when high drug doses are administered, the tumour first reduces in size before regrowing, insensitive to further treatment. 

We thus want to establish asymptotic properties of the model, a challenging task since it is similar to system (\ref{2x2Control}). The following statement is our first main result: we achieve a complete description of the asymptotic behaviour of system (\ref{contsyst}), with a class of asymptotically constant controls. 

\smallskip
\begin{theorem}
\label{thm_asympt}
Let $u_1, u_2$ be any functions in $BV\left([0,+\infty)\right)$, and let $\bar u_1$, $\bar u_2$ be their limits at $+\infty$. Then, for any positive initial population of healthy and of tumour cells, $\left(\rho_H(t),\rho_C(t)\right)$ converges to some equilibrium point $(\rho_H^\infty,\rho_C^\infty)$, which can be explicitly computed. \par

Furthermore, $n_H$ and $n_C$ concentrate on a set of points which can also be explicitly computed.
\end{theorem}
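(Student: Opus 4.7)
The plan is to reduce the theorem to the autonomous limit system obtained by freezing $u_1, u_2$ to their limits $\bar u_1, \bar u_2$, and then to exploit a Lyapunov functional tailored to this limit system. First I would establish a priori bounds: since $u_1, u_2 \in BV([0,+\infty))$ are uniformly bounded, elementary comparison arguments on the scalar ODEs satisfied by $\rho_H$ and $\rho_C$ yield upper bounds and, for strictly positive initial data, strictly positive lower bounds on $[0,+\infty)$.

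Next I would identify the candidate limit. Set $\bar\varphi_H(x) := r_H(x)/(1+\alpha_H \bar u_2) - \bar u_1 \mu_H(x)$, $\bar\psi_H(x) := \bar\varphi_H(x)/d_H(x)$, and define $\bar\varphi_C, \bar\psi_C$ symmetrically. Let $x_H^* \in \amax_{[0,1]}\bar\psi_H$ and $x_C^* \in \amax_{[0,1]}\bar\psi_C$. The competition assumption (\ref{Competition}) ensures that the $2\times 2$ linear system
$$a_{HH}\rho_H^\infty + a_{HC}\rho_C^\infty = \bar\psi_H(x_H^*), \qquad a_{CH}\rho_H^\infty + a_{CC}\rho_C^\infty = \bar\psi_C(x_C^*)$$
has a unique positive solution $(\rho_H^\infty, \rho_C^\infty)$, which I claim is the sought limit, with $n_H, n_C$ concentrating on $\{x_H^*\}, \{x_C^*\}$.

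The heart of the proof is the construction of a Lyapunov function $V$ for the autonomous limit system. I would search for $V$ built from two kinds of ingredients: convex potentials in $\rho_H, \rho_C$ minimized at $(\rho_H^\infty, \rho_C^\infty)$ (of the type $\rho_i - \rho_i^\infty \log \rho_i$), and weighted moments $\int_0^1 w_i(x)\, n_i(t,x)\, dx$ with nonnegative weights $w_H, w_C$ vanishing at $x_H^*, x_C^*$. Computing $\dot V$ along trajectories, the $\rho$-contributions couple the two populations through $I_H$ and $I_C$, while the moment contributions have definite sign via the inequalities $\bar\psi_H(x) \leq \bar\psi_H(x_H^*)$ on $[0,1]$ and its $C$-analogue. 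The weights $w_i$ and the scalar multipliers have to be tuned so that the indefinite cross-couplings are dominated by the diagonal contributions; this is precisely where the strict diagonal dominance furnished by (\ref{Competition}) is used in an essential way. Once $\dot V \leq 0$ is established with equality only at the target equilibrium, LaSalle's invariance principle together with the uniform bounds forces $(\rho_H(t), \rho_C(t)) \to (\rho_H^\infty, \rho_C^\infty)$. Concentration then follows from the explicit representation $n_H(t,x) = n_H^0(x) \exp \int_0^t R_H\, ds$ (and similarly for $n_C$): the time average of the exponent converges uniformly in $x$ to $d_H(x)\bigl(\bar\psi_H(x) - \bar\psi_H(x_H^*)\bigr)$, which is strictly negative away from $\{x_H^*\}$.

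Finally, to pass from the autonomous limit system to the $BV$-controlled case, I would invoke the theory of asymptotically autonomous systems: the perturbation $R_H - \bar R_H$ (and its $C$-analogue) is pointwise bounded by a constant times $|u_1(t) - \bar u_1| + |u_2(t) - \bar u_2|$, which tends to $0$ as $t \to +\infty$ since $u_1, u_2$ are $BV$ functions admitting limits. This suffices to ensure that the $\omega$-limit set of the non-autonomous trajectory in the $(\rho_H, \rho_C)$-plane is contained in the equilibrium set of the autonomous system, which yields the announced convergence. The main obstacle I foresee is the third step: cooking up the correct weights $w_H, w_C$ and multipliers so that $\dot V \leq 0$ in the presence of the \emph{bidirectional} coupling between healthy and cancer cells, which makes systems like (\ref{2x2}) significantly more delicate than the triangular case treated in \cite{Busse2016}.
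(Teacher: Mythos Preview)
Your overall strategy---identify the candidate limit, build a Lyapunov functional, then absorb the non-autonomous perturbation---matches the paper's. But two of your three main steps have genuine gaps.

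\textbf{The Lyapunov functional.} Your proposed building blocks, a scalar potential $\rho_i - \rho_i^\infty \log \rho_i$ together with weighted moments $\int w_i(x)\,n_i(t,x)\,dx$, will not close. Differentiating $\rho_i - \rho_i^\infty \log \rho_i$ gives $(1 - \rho_i^\infty/\rho_i)\int_0^1 R_i(x,\ldots)\,n_i(t,x)\,dx$, and since $R_i$ depends on $x$ through $r_i(x), d_i(x), \mu_i(x)$, this does not factor into a quadratic form in $(\rho_H - \rho_H^\infty,\, \rho_C - \rho_C^\infty)$. The paper's key insight is to work at the \emph{density} level: with $m_i(x) := 1/d_i(x)$ and $n_i^\infty$ any measure supported on the concentration set with mass $\rho_i^\infty$, set
\[
V_i(t) = \int_0^1 m_i(x)\Bigl[\,n_i^\infty(x)\ln\bigl(1/n_i(t,x)\bigr) + n_i(t,x) - n_i^\infty(x)\Bigr]\,dx .
\]
Because $m_i(x)\bigl(R_i(x,\rho_H,\rho_C,\cdot) - R_i(x,\rho_H^\infty,\rho_C^\infty,\cdot)\bigr)$ is \emph{independent of $x$}, the derivative $\dot V_i$ splits cleanly into a quadratic form in $(\rho_H - \rho_H^\infty, \rho_C - \rho_C^\infty)$ plus a sign-definite ``concentration'' remainder. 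The multipliers $\lambda_H = 1/a_{HC}$, $\lambda_C = 1/a_{CH}$ then make the quadratic form $-\tfrac12 X^T M X$ negative definite precisely under (\ref{Competition}). This is the step you flagged as the main obstacle, and the resolution is not a clever choice of $w_i$ but a different functional altogether.

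\textbf{LaSalle and the asymptotically autonomous reduction.} Even with the correct $V$, invoking LaSalle is delicate here: the state $(n_H(t,\cdot),n_C(t,\cdot))$ lives in an infinite-dimensional space, the trajectory is not precompact in $L^1$ (it converges to Dirac masses), and $V$ itself is unbounded below (it diverges to $-\infty$ as concentration occurs). The paper bypasses LaSalle entirely by a quantitative argument: a Lipschitz estimate gives $n_i(t,x)\le Ct$, hence $V(t) \ge -C(\ln t + 1)$; then, introducing an auxiliary $G$ with $\dot V \le \tfrac12 G$ and $\dot G \ge -a(t)\dot u_2 - b(t)\dot u_1$, one integrates to get that $-\tfrac12 X^T M X$ plus the nonpositive concentration terms are bounded below by $2(V(t)-V(0))/t$ minus a Ces\`aro-type average of $\int \phi\,\dot u_i$. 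The $BV$ hypothesis is used \emph{here}, not via asymptotically-autonomous theory: since $\dot u_i$ is a finite measure, $\Gamma(t) = \int_0^t \phi\,\dot u_i$ has a limit, so $\Gamma(t) - \tfrac1t\int_0^t \Gamma$ tends to $0$. This forces each nonpositive piece to vanish, yielding both convergence of $(\rho_H,\rho_C)$ and concentration simultaneously, with explicit $O(\ln t/t)$ rates. Your plan to treat the $BV$ controls as a vanishing perturbation of an autonomous system and then appeal to $\omega$-limit-set chain arguments would require precompactness and a well-defined limit semiflow on measures, neither of which is available without substantial extra work.

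Your final step---deducing concentration from the exponential representation once $(\rho_H,\rho_C)$ is known to converge---is fine and is essentially how the paper argues as well.
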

The explicit values can be found in Section \ref{Section2}, where this result is proved. If $\bar u_1=0$, the sets of points on which $n_H$ and $n_C$ concentrate are independent of $\bar u_2$. This is due to the fact that the phenotypic variable $x$ models resistance to cytotoxic drugs. \par
Thus, system (\ref{contsyst}) also exhibits convergence and concentration: the classical features of a single IDE generalise to a system, and our method is flexible enough to incorporate controls in $BV([0,+\infty))$. The main ingredient of the proof is an appropriately designed Lyapunov function, inspired by~\cite{Jabin2011} and by the classical Lyapunov functions used for studying the global asymptotical stability of steady-states in Lotka-Volterra ODE systems, as in~\cite{Goh1977}. The proof then consists of a fine analysis of this Lyapunov functional, yielding both convergence and concentration, while providing estimates on their speed. This method is new in the analysis of such IDE systems. \par 
If $\mu_C$ vanishes identically on some interval $[1-\varepsilon,1]$ (meaning that full resistance is possible), this theorem explains why, in the long run, high doses are not optimal. This means that our mathematical conclusions are in agreement with the idea that the standard method used in the clinic, namely administering maximum tolerated doses, should be reconsidered. Alternatives are currently extensively being investigated by oncologists, e.g., metronomic scheduling, which relies on frequent and continuous low doses of chemotherapy~\cite{Benzekry2013, Carrere2017, Pasquier2010}. \par 

Theorem \ref{thm_asympt} thus motivates the optimal control problem of searching for the best possible functions $u_1$ and $u_2$ to minimise the number of cancer cells within a given horizon of time, which we now introduce in more details. \par

\paragraph{Optimal control problem: optimal chemotherapy strategy.} We fix some $T>0$, and assume that the initial conditions $n_H^0$ and $n_C^0$ are continuous and positive functions on $[0,1]$. For any $(u_1, u_2)$ in $\left(BV(0,T)\right)^2$ which satisfy \re{u_max}, we consider the associated trajectory $(n_H(\cdot,x), n_C(\cdot,x))$ on $[0,T]$, solution of the system \re{contsyst} starting from $(n_H^0, n_C^0)$. 
We also take into account two state constraints:
\par 
$\bullet$ \,
it is required to keep a minimal proportion of healthy cells with respect to the total number of cells, and hence we impose that
\begin{equation}\label{cont_HC}
\frac{\rho_H(t)}{\rho_H(t)+\rho_C(t)} \geq \theta_{HC},
\end{equation}
for some $0<\theta_{HC}<1$. \par
$\bullet$ \,
Moreover, we impose that the number of healthy cells always remains above a certain fraction of the initial number of healthy cells:
\begin{equation}
\label{cont_H}
\rho_H(t) \ge \theta_H \rho_H(0),
\end{equation}
for some $0<\theta_{H}<1$.  \par
We define $\cal{A}$$_T$ as the set of admissible controls, \textit{i.e.}, for which those constraints are satisfied on $(0,T)$. For given $(u_1, u_2)\in{\cal{A}}_T$, we define the associated cost as the number of cancer cells at the end of the time-frame:
\begin{equation}
\label{Cost}
C_T(u_1,u_2):=\rho_C(T).
\end{equation}


\noindent
We now define the optimal control problem, denoted in short ({\bf OCP}) in the sequel, as 
\begin{equation}
\label{OCP}
\inf_{(u_1, u_2)\in{\cal{A}}_T} C_T(u_1,u_2).
\end{equation}
In other words, we want to find the best drug administration strategy to minimise the number of cancer cells at the end of a fixed time-frame $[0,T]$, while both keeping toxicity to a tolerable level and controlling tumor size. It might seem more natural to study the problem in free final time, but as explained later on, the mapping $T\longmapsto \rho_C(T)$ (where $\rho_C(T)$ is the optimal value obtained by solving ({\bf OCP}) on $[0,T]$) is decreasing in $T$. This implies that the optimal control problem in free final time $T$ is ill-posed and does not admit any solution. The other implication is that when solving the optimal control problem in free final time $t_f$ under the constraint $t_f\leq T$ (where $T$ is a horizon), then the optimal solution will be such that $t_f=T$. This is why we focus on an optimal control problem in fixed final time.\par

In this paper, we perform a thorough study of ({\bf OCP}), both theoretically and numerically. 

The theoretical study is made on a smaller class of controls which, after a long phase of constant doses, are allowed to vary on a small final time frame. More precisely, for a given $T_1<T$, we consider the subclass $\mathcal{B}_T \subset \mathcal{A}_T$ defined by
$$\mathcal{B}_T:= \bigg\{(u_1, u_2) \in \mathcal{A}_T, \; (u_1(t),u_2(t)) = (\bar u_1, \bar u_2) \; \text{on} \; (0,T_1), \; T-T_1 \leq T_2^{M} \bigg\}$$ where $T$ is large and where the optimal length of the second phase $T_2:=T-T_1$ is bounded above by some small constant $T_2^{M}$. Thus, the first phase is long. Optimising within this class is equivalent to searching for constant optimal values $\bar u_1$, $\bar u_2$ of the controls during the first phase, the length of the second phase $T_2 \leq T_2^{M}$, and optimal $BV(T_1,T)$ controls $u_1$, $u_2$ on $(T_1,T)$. The reason for this restriction to this class of controls comes from the answer to the following question: given a specific tumour size (\textit{i.e.}, a given number of cancer cells), what would be the optimal phenotypic cellular distribution in order tumor burden at the end of the time interval? Proposition \ref{DiracStart} shows that, for a very short time, it is always better that the cancer cell population be concentrated on some appropriate phenotype, \textit{i.e.}, that the initial population be a Dirac mass at some appropriate point. 
\par 
From Theorem \ref{thm_asympt}, we know that it is possible to asymptotically reach Dirac masses with constant controls. The combination of these two results justifies the analysis in $\mathcal{B}_T$. 
\par
In this class of controls, our second main result characterises a quasi-optimal strategy in large time, a result which we now state informally. \par 
\textit{Consider the problem of minimising $\rho_C(T)$ within the class $\mathcal{B}_T$. 
When $T$ is large enough, the optimal strategy approximately consists of:}
\begin{itemize}
\item[$\bullet$] \textit{a first long-time arc, with constant controls on $[0,T_1]$, at the end of which populations have almost concentrated in phenotype (for $T$ large);
\item[$\bullet$] a last short-time part, on $[T_1,T]$ consisting of at most three arcs (for small $T_2 = T-T_1$):} 
\begin{itemize}
\item[-] \textit{a boundary arc\footnote{A boundary arc (for the state constraint $g\left(\rho_H,\rho_C\right) \leq 0$) is an arc along which $g\left(\rho_H,\rho_C\right)=0$, i.e., the constraint is saturated. A free arc is an arc along which $g\left(\rho_H,\rho_C\right) < 0$, i.e., the constraint is not saturated.}
\item[-] a free arc with controls $u_1= u_1^{max}$ and $u_2= u_2^{max}$;}
\item[-] \textit{a boundary arc along the constraint (\ref{cont_H}) with $u_2= u_2^{max}$. }
\end{itemize}
\end{itemize}
The precise result and hypotheses are given by Theorem \ref{SolveOCP} in Section \ref{Section3}. 

In order for Theorem \ref{SolveOCP} to hold, an important assumption we make is that when cancer cells are concentrated on a sensitive phenotype, the maximum tolerated doses will kill more cancer cells than healthy ones. Without this assumption, it is not clear whether one can expect the same strategy to be optimal, nor whether the patient can efficiently be treated. 

We also emphasise that, for these IDEs, a PMP can be established but would not lead to tractable equations. The key property to still be able to identify the optimal strategy in $\mathcal{B}_T$ is that the long first phase allows us to use Theorem \ref{thm_asympt}: both populations concentrate and their dynamics on the last phase are (approximately) governed by ODEs, as proved in Lemma \ref{Concentrated}. The second phase can thus be analysed with ODE techniques, here the Pontryagin maximum principle (see~\cite{Agrachev2004, Pontryagin1964, Trelat2012}). This is done in Proposition \ref{SecondPhase}.

More concretely, Theorem \ref{SolveOCP} says that:
\begin{quote}
To optimally treat a cancer, the quasi-optimal strategy consists of:
\begin{itemize}
\item
first, administering constant doses to the patient, over a long time. The role of the first long-time arc is to allow the cancer cell population to concentrate on a sensitive phenotype. From a mathematical point of view, this means that the healthy and tumour cell populations (almost) converge to a Dirac mass.
\item
second, during a short-time phase, following a strategy composed of at most three arcs. If the first phase is such that the constraint (\ref{cont_HC}) is saturated, then there can be a first arc along this constraint. The maximal amount of drugs is administered until the constraint (\ref{cont_H}) saturates. The last arc is along this constraint, with an appropriately chosen cytotoxic drug infusion which leads to a further decrease of the number of cancer cells. 
\end{itemize}
\end{quote}

\par 
Numerically, we solve the problem ({\bf OCP}) in $\mathcal{A}_T$. The simulations confirm the theoretical results and show that, with the chosen set of parameters, the strategy indeed approximately consists of these two phases for $T$ large. 
We also compare the optimal strategy with a periodic one, and verify that the former performs better than the latter.
\par
Furthermore, the numerical results suggest that for generic parameters, the optimal choice of constant controls on the first phase is such that the constraint (\ref{cont_HC}) is saturated. Thus, the second phase possibly starts on this constraint.
\par 
Another important property highlighted by the numerical simulations is that, given the choice of parameters made, $\rho_C$ can decrease arbitrarily close to $0$ once the cancer cell population has concentrated on a sensitive enough phenotype. We thus find a strategy for which $T\longmapsto \rho_C(T)$ is decreasing to $0$; hence, there would be no solution to ({\bf OCP}) if the final time $T$ were let free. \par

This is the first time that a mathematical model based on integro-differential equations demonstrates that, within our modeling framework, immediate administration of  maximal tolerated drug doses, or a periodic treatment schedule, is an optimal solution for eradicating cancer. Here, we prove that it is better to allow the phenotypes to concentrate, before administering maximal doses. Such a strategy is much more efficient. This is also a message to be conveyed to clinicians, who have become increasingly aware of such facts.

\bigskip
The paper is organised as follows. Section \ref{Section2} is devoted to the proof of Theorem \ref{thm_asympt} and to numerical simulations showing how the model can reproduce the regrowth of a cancer cell population. Using these results, we have theoretical and numerical grounds for our claim that constant doses are sub-optimal and we then turn our attention to ({\bf OCP}). In Section \ref{Section3}, several arguments are given to justify the restriction to the class $\mathcal{B}_T$, with a long first phase. The rest of the section is then devoted to proving Theorem \ref{SolveOCP}. The numerical solutions of ({\bf OCP}) in $\mathcal{A}_T$ are provided in Section \ref{Section4}. They are compared to periodic strategies. In Section \ref{Section5}, we conclude with several comments and open questions.

\section{Constant infusion strategies}
\label{Section2}

%
This section is devoted to the asymptotic analysis of the IDE model (\ref{contsyst}), in order to specifically understand the effect of giving constant doses on the long run. We start by considering one equation only, for which relatively simple and well known arguments are sufficient to conclude that both convergence and concentration hold. 

\subsection{Asymptotics for healthy or cancer cells alone}
In this section, we assume that $n_H^0=0$ and that $n_C^0$ is continuous and positive on $[0,1]$. We have the following result of convergence and concentration for constant controls and cancer cells alone. Of course, we have a similar statement for healthy cells alone.

\begin{lemma}\label{lem1}
Assume that $u_1$ and $u_2$ are constant: $u_1\equiv\bar u_1$, and $u_2\equiv \bar u_2$, and assume that 
\begin{equation}
\label{Positive}
\frac{r_C}{1+\alpha_C\bar u_2} - \bar u_1 \mu_C > 0 \; \text{on} \; [0,1].
\end{equation}
Then the total population of cancer cells $\rho_C(t)$ converges to $\rho_C^\infty\geq 0$, which is the smallest nonnegative real number such that
\begin{equation}\label{ineq1}
\frac{r_C}{1+\alpha_C\bar u_2} - \bar u_1 \mu_C \leq d_Ca_{CC}\rho_C^\infty \; \, \text{on} \, \; [0,1].
\end{equation}
Let $B_C\subset[0,1]$ be the set of points such that the equality holds in \eqref{ineq1}.
Then $n_C(t,\cdot)$ concentrates on $B_C$ as $t$ goes to $+\infty$. In particular, if $B_C$ is reduced to a singleton $x_C^\infty$, then $n_C(t,\cdot)$ converges to $\rho_C^\infty \delta_{x_C^\infty}$ in $\cal{M}$$^1(0,1)$. 
\end{lemma}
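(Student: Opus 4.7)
First, I would reduce to a single equation. Since $n_H^0 \equiv 0$ and $n_H \equiv 0$ is obviously a solution of the first equation in \eqref{contsyst} starting from this data, uniqueness yields $n_H(t,\cdot) \equiv 0$, so $I_C(t) = a_{CC}\rho_C(t)$. Setting
$$R(x) \;:=\; \frac{r_C(x)}{1+\alpha_C \bar u_2} - \bar u_1\mu_C(x),$$
which is positive on $[0,1]$ by \eqref{Positive}, the problem reduces to
$$\partial_t n_C(t,x) \;=\; \bigl(R(x) - a_{CC} d_C(x)\, \rho_C(t)\bigr)\, n_C(t,x).$$
Since $R$ and $d_C$ are continuous and positive, the smallest $\rho \geq 0$ with $R \leq a_{CC} d_C\, \rho$ on $[0,1]$ is $\rho_C^\infty = \max_{[0,1]} R/(a_{CC} d_C) > 0$, and $B_C$ is precisely the (closed, non-empty) argmax.

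\textbf{A priori bounds.} Integrating in $x$ yields the logistic-type inequality $\rho_C'(t) \leq (\sup R)\rho_C(t) - a_{CC}(\inf d_C)\rho_C(t)^2$, so $\rho_C$ is uniformly bounded above. Moreover, the Duhamel-type representation
$$n_C(t,x) \;=\; n_C^0(x)\exp\!\left( t R(x) - a_{CC} d_C(x) \int_0^t \rho_C(s)\,ds\right)$$
shows that $n_C(t,\cdot)$ remains continuous and positive on $[0,1]$. Since $n_C^0$ is continuous and positive, standard arguments give a uniform strictly positive lower bound on $\rho_C$.

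\textbf{Convergence of $\rho_C$.} I would now show that $\rho_C \to \rho_C^\infty$ by proving $\rho_C \in BV([0,+\infty))$. Writing
$$\rho_C'(t) \;=\; \int_0^1 \bigl(R(x) - a_{CC} d_C(x)\rho_C^\infty\bigr)\, n_C(t,x)\,dx \;-\; a_{CC}(\rho_C(t)-\rho_C^\infty) \int_0^1 d_C(x)\, n_C(t,x)\,dx,$$
and recalling $R - a_{CC}d_C\rho_C^\infty \leq 0$ everywhere, one obtains sign information linking $\rho_C'$ to $\rho_C - \rho_C^\infty$ that, together with the boundedness above and below of $\rho_C$ and of $\int d_C n_C\,dx$, yields an $L^1$-in-time estimate on $\rho_C'$ of definite sign, from which the full $BV$ bound follows as in \cite{Perthame2006, Lorz2011}. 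Any cluster point $\rho^*$ of $\rho_C(t)$ must then satisfy the equilibrium condition of the reduced dynamics, and the monotonicity in the definition of $\rho_C^\infty$ forces $\rho^* = \rho_C^\infty$.

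\textbf{Concentration and main obstacle.} Once $\rho_C \to \rho_C^\infty$, Cesaro averaging gives $\frac{1}{t}\int_0^t \rho_C \to \rho_C^\infty$, hence
$$\frac{1}{t}\ln n_C(t,x) \;=\; \frac{\ln n_C^0(x)}{t} + R(x) - a_{CC} d_C(x)\,\frac{1}{t}\!\int_0^t \rho_C(s)\,ds \;\longrightarrow\; R(x) - a_{CC} d_C(x)\, \rho_C^\infty$$
uniformly in $x\in[0,1]$. The right-hand side is nonpositive with zero set exactly $B_C$, so for any open neighbourhood $U$ of $B_C$ one obtains exponential decay of $n_C(t,x)$ on $[0,1]\setminus U$; combined with $\rho_C(t)\to\rho_C^\infty$ and the uniqueness of weak-$*$ limits on bounded total variation, this forces $n_C(t,\cdot)\rightharpoonup^* \rho_C^\infty \delta_{x_C^\infty}$ in $\mathcal{M}^1(0,1)$ when $B_C=\{x_C^\infty\}$, and concentration on $B_C$ in general. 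The main obstacle is the $BV$ estimate for $\rho_C$; everything else is essentially a Laplace-type concentration argument built on the explicit exponential representation.
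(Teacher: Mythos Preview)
Your overall architecture is sound and matches the paper's: reduce to a single equation, get upper and lower bounds on $\rho_C$, prove $\rho_C\in BV([0,\infty))$ so that it converges, identify the limit via the explicit exponential formula, and read off concentration on $B_C$. The concentration step and the identification $\rho_C^\infty=\max_{[0,1]} R/(a_{CC}d_C)$ are fine.

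The gap is in your $BV$ step. The decomposition
\[
\rho_C'(t)=\int_0^1\!\bigl(R-a_{CC}d_C\rho_C^\infty\bigr)n_C\,dx \;-\; a_{CC}(\rho_C-\rho_C^\infty)\!\int_0^1 d_C n_C\,dx
\]
does give the sign information ``$\rho_C>\rho_C^\infty\Rightarrow\rho_C'<0$'', but this does \emph{not} yield an $L^1$ bound on either $(\rho_C')_+$ or $(\rho_C')_-$: the first integral is $\leq 0$ but not a priori integrable in time, and the second is controlled by $|\rho_C-\rho_C^\infty|$, which you have not yet shown to be integrable. The references you cite (Perthame, Lorz et al.) do \emph{not} proceed from this decomposition; they, and the paper, differentiate once more. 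Setting $q_C:=\rho_C'$ and using $\partial_{\rho_C}R_C=-a_{CC}d_C$, one gets
\[
q_C'=\int_0^1 R_C^2\,n_C\,dx \;-\; a_{CC}\!\left(\int_0^1 d_C n_C\,dx\right) q_C,
\]
so that $(q_C)_-$ satisfies $\dfrac{d}{dt}(q_C)_-\leq -a_{CC}\,d_C^{\min}\,\rho_C^{\min}\,(q_C)_-$ and hence decays exponentially; together with the upper bound on $\rho_C$ this gives $\rho_C\in BV$. This second differentiation and the positivity of $\int R_C^2 n_C$ is the missing idea; once you insert it, the rest of your argument goes through essentially as written.
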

Here and in the sequel, $\delta_x$ denotes the Dirac mass at $x$, and $\mathcal{M}^1(0,1)$ is the set of Radon measures supported in [0,1].

The proof of this lemma is rather classical, its main ingredient is proving that $\rho$ is a $BV$ function to obtain convergence, and concentration follows. It is done in Appendix \ref{AppA}. \par

\begin{remark}
\label{MoreGeneral}
As it clearly appears in the proof, the result holds for more general initial conditions in $L^\infty(0,1)$. One only needs to require that they are bounded from below by a positive constant on a neighbourhood of one of the points of $B_C$. 
\end{remark}


\subsection{Asymptotics for the complete model: proof of Theorem \ref{thm_asympt}}
Now, let us take into account the complete coupling between healthy and tumour cells.
For the remaining part of this article, we assume for simplicity that both $n_H^0$ and $n_C^0$ are continuous and positive on $[0,1]$ (although it is possible to be slightly more general, see Remark \ref{MoreGeneral}), but we emphasise that we no longer require assumption (\ref{Positive}). A further technical assumption is needed to prove that convergence and concentration hold, namely that the functions are Lipschitz continuous:
\begin{equation}
\label{Regularity}
r_H, r_C, d_H, d_C, \mu_H, \mu_C\in{C^{0,1}(0,1)}.
\end{equation}
In two dimensions and with constant controls $\bar u_1$, $\bar u_2$, the previous technique of proving that $\rho$ is $BV$ cannot be extended. As for a single equation, however, we can integrate the equations with respect to $x$ to obtain upper bounds for $\rho_H$ and $\rho_C$. For example, let us integrate the equation defining $\rho_H$ and bound as follows: 
\[
\dfrac{d\rho_H(t)}{dt} \leq \int_0^1 \big(r_H(x) - d_H(x) a_{HH} \rho_H(t)\big) n_H(t,x) \, dx.
\]
Thus, we clearly have $\limsup_{t\rightarrow +\infty}\rho_i \leq \rho_i^{max}:= \max_{x}\frac{r_i}{a_{ii}d_i}$ for $i=H,C$. \par

It also still holds with the reasoning made in the proof of Lemma \ref{lem1} that if $\rho_H$ and $\rho_C$ converge, then the limits must be the solution of the (invertible) system
\begin{equation}
\begin{split}
a_{HH} \rho_H^\infty + a_{HC} \rho_C^\infty &= I_H^\infty , \\
a_{CH} \rho_H^\infty + a_{CC} \rho_C^\infty &= I_C^\infty.
\end{split}
\end{equation}
where $I_H^\infty\geq 0$ is the smallest nonnegative real number such that
\begin{equation}\label{ineq11}
\frac{r_H(x)}{1+\alpha_H \bar u_2} - \bar u_1 \mu_H(x) \leq  d_H(x) I_H^\infty,
\end{equation}
and $I_C^\infty\geq 0$ is the smallest nonnegative real number such that
\begin{equation}\label{ineq22}
\frac{r_C(x)}{1+\alpha_C \bar u_2} - \bar u_1 \mu_C(x) \leq d_C(x) I_C^\infty .
\end{equation}
Furthermore, if this convergence holds true, then $n_H$ (resp. $n_C$) concentrate on $A_H$ (resp. $A_C$) defined as
\begin{align*}
A_H & = \bigg\{x\in[0,1], \; \frac{r_H(x)}{1+\alpha_H \bar u_2} - \bar u_1 \mu_H(x) - d_H(x) I_H^\infty = 0\bigg\}, \\ 
A_C & = \bigg\{x\in[0,1], \; \frac{r_C(x)}{1+\alpha_C \bar u_2} - \bar u_1 \mu_C(x) - d_C(x) I_C^\infty = 0\bigg\}.
\end{align*}

\medskip 
\noindent
\textbf{Proof of Theorem 1.} 

\textit{First step: definition of the Lyapunov functional.} \par
We adapt a strategy developed in~\cite{Jabin2011}. We choose any couple of measures $(n_H^\infty,n_C^\infty)$ in $\cal{M}$$^1(0,1)$ satisfying $\int_0^1 n_{\scriptscriptstyle{i}}^\infty (x) \, dx = \rho_{\scriptscriptstyle{i}}^\infty$, $i=H,C$ which furthermore satisfy 
\begin{equation}
\label{Support}
\text{supp}(n_H^\infty) \subset{A_H}, \; \text{supp}(n_C^\infty) \subset{A_C}.
\end{equation}

For $i=H,C$, and $m_{i} := \frac{1}{d_i}$, let us define the Lyapunov functional as 
\[V(t):= \lambda_H V_H(t) + \lambda_C V_C(t),\]
 where 
\[V_{i} (t) = \int_0^1 m_{i} (x) \left[ n_{i}^\infty(x) \ln\left(\dfrac{1}{n_{i}(t,x)}\right) + \left( n_{i}(t,x) - n_{i}^\infty(x) \right) \right] \, dx, \]
with positive constants $\lambda_H$ and $\lambda_C$ to be adequately chosen later. \par
\medskip
\textit{Second step: computation and sign of the derivative.}  

In what follows, we skip dependence in t in the functions $R_H$ and $R_C$ to increase readability.
We have
\begin{align*}
\dfrac{dV_H}{dt} & = \int_0^1 m_H(x) \left[- n_H^\infty(x) \dfrac{\partial_t n_H(t,x)}{n_H(t,x)} + \partial_t n_H(t,x) \right] \, dx \\ 
			 & = \int_0^1 m_H(x) \, R_H\left( x,\rho_H, \rho_C,u_1,u_2 \right)  \left[n_H(t,x)- n_H^\infty(x) \right] \, dx \\ 
			 & = \int_0^1 m_H(x) \, \left(R_H\left(x,\rho_H, \rho_C,u_1,u_2\right)-R_H\left(x,\rho_H^\infty, \rho_C^\infty,u_1,u_2\right)\right) \left[n_H(t,x)- n_H^\infty(x) \right] \, dx \\
			 &  \hspace{12em} + \int_0^1 m_H(x) \, R_H\left(x,\rho_H^\infty, \rho_C^\infty,u_1,u_2\right)  \left[n_H(t,x)- n_H^\infty(x) \right] \, dx
\end{align*}
The first term is simply 
\begin{align*}
 \int_0^1 m_H(x) \, \left(R_H\left(x,\rho_H, \rho_C,u_1,u_2\right)-R_H\left(x,\rho_H^\infty, \rho_C^\infty,u_1,u_2\right)\right) \left[n_H(t,x)- n_H^\infty(x) \right] & \\
  =\int_0^1 m_H(x) d_H(x) \, \left[a_{HH} (\rho_H^\infty - \rho_H) + a_{HC} (\rho_C^\infty - \rho_C)\right]  \left[n_H(t,x)- n_H^\infty(x) \right]  \, dx \\
 = - a_{HH} (\rho_H^\infty - \rho_H)^2 - a_{HC} (\rho_C^\infty - \rho_C)(\rho_H^\infty - \rho_H)
\end{align*}
The second term can also be written as
\begin{align*}
B_H(t) &:=  \int_0^1 m_H(x) \, R_H\left(x,\rho_H^\infty, \rho_C^\infty,u_1,u_2 \right)  \left[n_H(t,x)- n_H^\infty(x) \right] \, dx \\
&\; =  \int_0^1 m_H(x) \, R_H\left(x,\rho_H^\infty, \rho_C^\infty,\bar u_1,\bar u_2 \right)  \left[n_H(t,x)- n_H^\infty(x) \right] \, dx \\
			 +& \int_0^1 m_H(x) \, \left(R_H\left(x,\rho_H^\infty, \rho_C^\infty,u_1,u_2\right)-R_H\left(x,\rho_H^\infty, \rho_C^\infty,\bar u_1,\bar u_2\right) \right)\left[n_H(t,x)- n_H^\infty(x) \right] \, dx \\
	&\;  = \int_0^1 m_H(x) R_H\left(x,\rho_H^\infty, \rho_C^\infty,\bar u_1,\bar u_2 \right) n_H(t,x)  \, dx \\
	  +  \int_0^1& m_H(x) \left[ r_H(x) \left(\frac{1}{1+\alpha_H u_2}- \frac{1}{1+\alpha_H \bar u_2}  \right) + \mu_H(x) (\bar u_1-u_1)\right] \left[n_H(t,x) -n_H^\infty(x)\right]\, dx, 
\end{align*}
where we use (\ref{Support}) for the last equality. Note that the first term in the last expression is nonpositive by definition of $\left(\rho_H^\infty, \rho_C^\infty\right)$, and the second goes to $0$ as $t$ goes to $+\infty$. Consequently, the decomposition
 \begin{equation}
 \label{Decomp}
 B_{i}= \tilde{B}_{i} + E_{i}, \; \; i=H,C, 
 \end{equation}
holds, with $\tilde{B}_{H}$, $\tilde{B}_{C}$ nonpositive, and $E_H$, $E_C$ which asymptotically vanish. This decomposition will be important in the last step.

Eventually, we have: 
\begin{equation}
\label{Derivative}
\dfrac{dV}{dt} = 
- \dfrac{1}{2} X^T M X + \lambda_H B_H + \lambda_C B_C
\end{equation}
with $M=A^TD+ DA$, 
$X=$
$\begin{pmatrix}
  \rho_H^\infty-\rho_H \\
  \rho_C^\infty-\rho_C \\
 \end{pmatrix}$,
$D=$ 
$\begin{pmatrix}
 \lambda_H & 0  \\
 0 &  \lambda_C  \\
 \end{pmatrix}$ and 
 $A=$
  $\begin{pmatrix}
a_{HH} & a_{HC}  \\
 a_{CH} & a_{CC}  \\
 \end{pmatrix}$
. \\
We first look for a choice of constants $\lambda_H$, $\lambda_C$ that ensures that the symmetric matrix $M$, is also positive semi-definite. \\ 
Since $M=$
 $\begin{pmatrix}
 2  \lambda_H a_{HH} & \lambda_H a_{HC} + \lambda_C a_{CH}  \\
 \lambda_H a_{HC} + \lambda_C a_{CH}   & 2  \lambda_C a_{CC}  \\
 \end{pmatrix}$
has positive trace, both its eigenvalues are non-negative provided that its determinant is non-negative. 
Now, $\det(M) = 4 \lambda_H \lambda_C a_{HH} a_{CC}- \left(\lambda_H a_{HC} + \lambda_C a_{CH}\right)^2$, and we see that choosing $\lambda_H:=\frac{1}{a_{HC}}$ and $\lambda_C:=\frac{1}{a_{CH}}$ leads to 
$$
\det(M) = 4 \, \dfrac{a_{HH}a_{CC}- a_{HC}a_{CH}}{a_{HC}{a_{CH}}}>0
$$
using the assumption (\ref{Competition}). \\
Our aim is to prove that $- \dfrac{1}{2}X^T M X$ converges to $0$ as $t$ goes to $+\infty$, which will yield the convergence of $(\rho_H, \rho_C)$. Concentration of $(n_H, n_C)$ then follows easily with the arguments developed in the proof of Lemma \ref{lem1} . \par
\medskip
\textit{Third step: lower estimate for $V.$}\par
\noindent
To estimate $V$ from below, we need a uniform (in $x$) upper bound on $n_H$, $n_C$. Because of the regularity assumption made on the data (functions are Lipschitz continuous), there exists $C>0$ such that: 
$$ \forall (x,y) \in{[0,1]}, \; \; R_H\left(y, \rho_H, \rho_C,u_1,u_2\right) \geq R_H\left(x, \rho_H, \rho_C,u_1,u_2\right) - C |x-y|,$$
$C$ can be chosen to be independent of $t$ since $\rho_H$, $\rho_C$, $u_1$, and $u_2$ are all bounded. \\
This implies that
\begin{align*}
\int_0^1 n_H(t,y) \, dy &= \int_0^1 n_H^0(y) \exp\left(\int_0^t R_H\left(y, \rho_H, \rho_C,u_1,u_2\right) \, ds \right) \, dy \\
& \geq \int_0^1 \dfrac{n_H^0(y)}{n_H^0(x)} \left(n_H^0(x) \exp\left(\int_0^t R_H\left(x, \rho_H, \rho_C,u_1,u_2\right)\right)\right) \exp \left(- Ct |x-y|\right) \, dy \\
& \geq \dfrac{n_H(t,x)}{n_H^0(x)} \int_0^1 \exp \left(- Ct |x-y|\right) \,dy
\end{align*}
Using the boundedness of $\rho_H$ and $n_H^0$ ($C$ has changed and is independent of $t$ and $x$) and computing the integral, we can write: 
for $t$ large enough, $n_H(t,x) \leq C t $. Similarly, $n_C(t,x) \leq C t.$ \\
The bound on $V$ follows immediately: 
\begin{equation}
\label{BoundV}
V(t) \geq -C \left (\ln(t) + 1\right).
\end{equation}
We now define another function, close to $\frac{dV}{dt}$, whose behaviour will allow us to conclude. \par
\medskip
\textit{Fourth step: estimates on $\frac{dV}{dt}$.} \par
\noindent
We set $$G:= -\dfrac{1}{2} X^T M X +  2 \left(\lambda_H B_H + \lambda_C B_C\right).$$ 
The first term is: 
\begin{align*}-\dfrac{1}{2} X^T M X &= - \lambda_H a_{HH}  (\rho_H^\infty - \rho_H)^2 - \lambda_C a_{CC} (\rho_C^\infty - \rho_C)^2  \\
&  \hspace{10em}- (\lambda_H a_{HC} + \lambda_C a_{CH}) (\rho_H^\infty - \rho_H)(\rho_C^\infty - \rho_C),
\end{align*}
so that (writing in short $R_{H}$ for $R_{H}(x,\rho_H, \rho_C,u_1,u_2)$ and $R_C$ for $R_{C}(x,\rho_C, \rho_H,u_1,u_2)$):
\begin{align*}
\dfrac{d}{dt} \left(-\dfrac{1}{2} X^T M X \right) & = (\rho_H^\infty - \rho_H) \left[ 2 \lambda_H a_{HH} \int_0^1 R_H n_H + (\lambda_H a_{HC} + \lambda_C a_{CH}) \int_0^1 R_C n_C\right] \\
&  \hspace{.4em}+  (\rho_C^\infty - \rho_C) \left[ 2 \lambda_C a_{CC} \int_0^1 R_C n_C + (\lambda_H a_{HC} + \lambda_C a_{CH}) \int_0^1 R_H n_H\right]
\end{align*}
Let us now differentiate $B_H$ and $B_C$ using the expression that initially defines them: 
\begin{align*}
\dfrac{dB_H}{dt} &= \int_0^1 m_H(x) R_H(x,\rho_H^\infty,\rho_C^\infty,u_1,u_2) R_H(x,\rho_H, \rho_C,u_1,u_2) n_H(t,x) \, dx \\
& - \int_0^1 m_H(x) \left(\alpha_H r_H(x) \frac{1}{\left(1+\alpha_H u_2\right)^2} \dfrac{d u_2}{dt} + \mu_H(x)\dfrac{d u_1}{dt} \right) \left[ n_H(t,x) -n_H^\infty(x)\right] \, dx 
\end{align*}
Using the following estimate 
\begin{align*}
 m_H(x) R_H(&x,\rho_H^\infty, \rho_C^\infty,u_1,u_2) \; R_H(x,\rho_H, \rho_C,u_1,u_2) \\ 
&\geq m_H(x) \left(R_H(x,\rho_H^\infty,\rho_C^\infty,u_1,u_2)-R_H(x,\rho_H, \rho_C,u_1,u_2)\right) R_H(x,\rho_H, \rho_C,u_1,u_2) \\
& = - \left[ a_{HH} (\rho_H^\infty- \rho_H) + a_{HC}(\rho_C^\infty- \rho_C) \right] R_H(x,\rho_H, \rho_C,u_1,u_2),
\end{align*}
we obtain the inequality 
\begin{align*}
\dfrac{dB_H}{dt} & \geq - \lambda_H \left[ a_{HH} (\rho_H^\infty- \rho_H) + a_{HC}(\rho_C^\infty- \rho_C) \right] \int_0^1 R_H n_H \\
& - \int_0^1 m_H(x) \left( r_H(x) \frac{1}{\left(1+\alpha_H u_2\right)^2} \dfrac{d u_2}{dt} + \mu_H(x)\dfrac{d u_1}{dt} \right) \left[ n_H(t,x) -n_H^\infty(x)\right] \, dx 
\end{align*}
and similarly for $B_C$. 
We thus obtain (the terms in $a_{HH}$ and $a_{CC}$ cancel each other out):
\begin{align}
\dfrac{dG}{dt} &\geq (\rho_H^\infty - \rho_H) \left[ (\lambda_H a_{HC} + \lambda_C a_{CH}-2 \lambda_C a_{CH} ) \int_0^1 R_C n_C \right]  \nonumber \\
& + (\rho_C^\infty - \rho_C) \left[ (\lambda_H a_{HC} + \lambda_C a_{CH}-2 \lambda_H a_{HC} ) \int_0^1 R_H n_H \right]\nonumber  \\
& - \left(a(t) \dfrac{d u_2}{dt} + b(t)\dfrac{d u_1}{dt}\right)\label{e.dGdt}
\end{align}
where $a$ and $b$ are bounded functions defined by
\begin{align*}
a(t) & :=2 \lambda_H \frac{1}{\left(1+\alpha_H u_2\right)^2} \int_0^1 m_H(x) r_H(x) (n_H(t,x) - n_H^\infty(x)) \, dx \\
  & \hspace{2em} + 2  \lambda_C  \frac{1}{\left(1+\alpha_C u_2\right)^2}\int_0^1  m_C(x) r_C(x) (n_C(t,x) - n_C^\infty(x))\, dx ,
\end{align*}
\begin{align*}
b(t) & :=2 \lambda_H \int_0^1 m_H(x) \mu_H(x) (n_H(t,x) - n_H^\infty(x)) \, dx \\
  & \hspace{2em} +  2 \lambda_C \int_0^1 m_C(x) \mu_C(x) (n_C(t,x) - n_C^\infty(x))\, dx.
\end{align*}
The first two terms at the right-hand side of inequality \eqref{e.dGdt} are equal to $0$ thanks to the choice made for $(\lambda_H, \lambda_C)$ so that it simplifies to 
\begin{equation}
\label{BoundG}
\dfrac{dG}{dt} \geq - \left(a(t) \dfrac{d u_2}{dt} + b(t)\dfrac{d u_1}{dt}\right).
\end{equation}

\textit{Fifth step: conclusion.} \par

\noindent
Noting that $\frac{dV}{dt} \leq \frac{1}{2} G$, it follows that $V(t) - V(0) \leq \frac{1}{2} \int_0^t G(s) \, ds$.
From $G(s)=  G(t) - \int_s^t \frac{d G}{d t}(z) \, dz$, using (\ref{BoundG}) and by integrating the previous inequality, we have
$$\dfrac{V(t) - V(0)}{t} \leq \dfrac{1}{2} G(t) + \dfrac{1}{2t} \int_0^t \int_s^t \left( a(z) \frac{d u_2}{d t}(z) + b(z) \frac{d u_1}{d t}(z)\right) \, dz \, ds. $$  \\
Now, using the decomposition (\ref{Decomp}) introduced in the second step, we obtain: 
\begin{align*}
2 \dfrac{V(t) - V(0)}{t} & - \dfrac{1}{t} \int_0^t \int_s^t \left( a(z) \frac{d u_2}{d t}(z) + b(z) \frac{d u_1}{d t}(z)\right) \, dz \, ds - 2\left(\lambda_H E_H + \lambda_C E_C\right)   \\
& \leq  - \dfrac{1}{2} X^T M X +2 \left( \lambda_H \tilde{B}_H + \lambda_C \tilde{B}_C\right).
\end{align*}
In other words, since the right-hand side of this inequality consists of nonpositive terms, the claim on the convergence of $\rho_H$ and $\rho_C$ is proved if we establish that the left-hand side tends to $0$.\\
As a consequence of the estimate (\ref{BoundV}) on $V$ established in the third step, $2 \dfrac{V(t) - V(0)}{t}$ converges to $0$. This is also true for $2 \left(\lambda_H E_H + \lambda_C E_C\right)$. It thus remains to analyse the asymptotic behaviour of the function $\frac{1}{t} \int_0^t \int_s^t \left( a(z) \frac{d u_2}{d t}(z) + b(z) \frac{d u_1}{d t}(z)\right) \, dz \, ds$.
The analysis relies on the following lemma.

\begin{lemma}
\label{LemmaBV}
Let $\phi$ in $L^\infty(0, +\infty)$, and $u$ in $BV\left([0, +\infty)\right)$. 
Then $$\lim_{t \rightarrow +\infty} \frac{1}{t} \int_0^t \int_s^t \phi(z) u'(z)   \, dz \, ds = 0.$$
\end{lemma}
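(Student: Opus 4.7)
The first step is to swap the order of integration via Fubini. The triangular domain $\{(s,z) : 0 \leq s \leq t,\ s \leq z \leq t\}$ rewrites as $\{(s,z) : 0 \leq z \leq t,\ 0 \leq s \leq z\}$, which yields
\[
\int_0^t \int_s^t \phi(z) u'(z)\, dz\, ds = \int_0^t z\, \phi(z) u'(z)\, dz,
\]
so the claim reduces to showing that $\frac{1}{t}\int_0^t z\, \phi(z) u'(z)\, dz \to 0$ as $t \to +\infty$.

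The second step uses that $u \in BV([0,+\infty))$ implies $\int_0^{+\infty} |u'(z)|\, dz = V(u) < +\infty$ (where $u'$ is understood as the associated finite signed Radon measure if $u$ has jumps; the bounds below then read with $|du|$ in place of $|u'|\, dz$). Fix $\varepsilon > 0$ and choose $T > 0$ such that $\int_T^{+\infty} |u'(z)|\, dz \leq \varepsilon$. Splitting the integral at $T$, I would bound
\[
\left|\frac{1}{t}\int_0^t z\, \phi(z) u'(z)\, dz\right| \leq \frac{1}{t}\left|\int_0^T z\, \phi(z) u'(z)\, dz\right| + \frac{1}{t}\left|\int_T^t z\, \phi(z) u'(z)\, dz\right|.
\]
For the first piece, I bound $|z|\leq T$ and get a contribution at most $\frac{T\,\|\phi\|_\infty V(u)}{t}$, which tends to $0$ as $t \to +\infty$. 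For the second piece, since $z \leq t$ on the integration domain, I bound $|z\,\phi(z) u'(z)| \leq t\,\|\phi\|_\infty |u'(z)|$, which gives a contribution at most $\|\phi\|_\infty \int_T^{+\infty}|u'(z)|\, dz \leq \|\phi\|_\infty \varepsilon$. Taking $\limsup_{t\to+\infty}$ and then letting $\varepsilon \to 0$ concludes.

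The argument is essentially routine once Fubini has been applied, and I do not expect any real obstacle. The only mildly delicate point is the interpretation of $u'$ when $u$ is merely $BV$ and not absolutely continuous, but since only the total variation $V(u)$ and bounds of the form $\int |\phi|\, |du|$ are used, the estimates remain valid verbatim in the measure-theoretic formulation.
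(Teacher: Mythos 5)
Your proof is correct, but it takes a different route from the paper's. The paper keeps the double integral and rewrites it as $\Gamma(t)-\frac{1}{t}\int_0^t\Gamma(s)\,ds$ with $\Gamma(t)=\int_0^t\phi(z)u'(z)\,dz$, i.e.\ as $\Gamma$ minus its Ces\`aro average, and concludes because $\Gamma$ converges (as $\phi u'$ is integrable on the half-line). You instead apply Fubini first, collapsing the double integral to $\frac{1}{t}\int_0^t z\,\phi(z)u'(z)\,dz$, and then run a direct $\varepsilon$-splitting at a cutoff $T$; the two computations are consistent, since the paper's expression $\Gamma(t)-\frac{1}{t}\int_0^t\Gamma(s)\,ds$ equals exactly $\frac{1}{t}\int_0^t z\,\phi(z)u'(z)\,dz$ after the same interchange of integrals. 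Both arguments rest on the same two facts ($\phi$ bounded, the derivative of a $BV$ function integrable as a measure). The paper's version is slightly slicker in that it names the structural reason (Ces\`aro averaging preserves limits); yours is more self-contained and quantitative --- your splitting would immediately yield a rate of decay if one had a rate on the tail $\int_T^{+\infty}|du|$. Your remark on interpreting $u'$ as the total-variation measure when $u$ is not absolutely continuous is the right caveat and is handled identically (if implicitly) in the paper.
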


\begin{proof}
Let us start by writing
\begin{align*}
\dfrac{1}{t} \int_0^t \int_s^t  \phi(z) u'(z)  \, dz \, ds & =  \dfrac{1}{t} \int_0^t \int_0^t \phi(z) u'(z) \, dz \, ds - \dfrac{1}{t} \int_0^t \int_0^s  \phi(z) u'(z)  \, dz \, ds  \\ 
& = \int_0^t \phi(z) u'(z)  \, dz - \dfrac{1}{t} \int_0^t \int_0^s  \phi(z) u'(z) \, dz \, ds \\
& = \Gamma(t) - \dfrac{1}{t} \int_0^t \Gamma(s) \, ds 
\end{align*}
where $\Gamma(t):=\int_0^t \phi(z) u'(z)\, dz$.
The expression above can thus be decomposed as the function $\Gamma$ minus its Ces\`aro average.
To conclude, it suffices that $\Gamma$ has a limit at $+\infty$, which in turn is true as soon as $\phi \, u'$ is integrable on the half-line. This fact is a direct consequence of the boundedness of $\phi$ and the integrability of the derivative of a $BV$ function on $[0, +\infty)$.
\end{proof}
This ends the proof of Theorem \ref{thm_asympt}.

\begin{remark}
The situation differs from~\cite{Jabin2011}  where the non-local logistic term is of the form $\int_0^1 b(x,y) \, n(t,y) \, dy$, with some strong competition assumption on the kernel $b$. In particular, that assumption implies the uniqueness of the ESDs,  which is not necessarily true in our setting.
\end{remark}

\begin{remark}
This theorem means that under general conditions, both populations concentrate and the total number of healthy (resp., cancer) cells converge. In the case of constant controls and when there is selection of a unique phenotype in both populations, it provides a complete understanding of the mapping
\begin{equation}
\label{Mapping}
(\bar u_1,\bar u_2) \longmapsto (x_H^\infty,x_C^\infty, \rho_H^\infty, \rho_C^\infty),
\end{equation}
where $\rho_H^\infty \delta_{x_H^\infty}$ and $\rho_C^\infty \delta_{x_C^\infty}$ are the respective limits of $n_H(t,\cdot)$ and $n_C(t,\cdot)$ in $\cal{M}$$^1(0,1)$, as $t$ goes to $+\infty$.
In particular, if we restrict ourselves to constant controls and a large time $T$, the problem of minimising $\rho_C(T)$ is equivalent to minimising $\rho_C^\infty$ as a function of $(\bar u_1,\bar u_2)$.
\end{remark}

\subsection{Speed of convergence in Theorem \ref{thm_asympt}}

Because it relies on a Lyapunov functional, Theorem \ref{thm_asympt} also yields results on the speeds of convergence when the controls are constant, which we state and analyse separately in the following corollary.
\medskip

\begin{corollary} 
Assume $u_1 \equiv \bar u_1$, $u_2 \equiv \bar u_2$. 
\newline
Then, as $t \rightarrow +\infty$, $\left(\rho_H, \rho_C\right) = \left(\rho_H^\infty, \rho_C^\infty\right) + \mathrm{O}\left(\left(\frac{\ln(t)}{t}\right)^{\frac{1}{2}}\right)$ and concentration occurs at speed $\mathrm{O}\left(\frac{\ln(t)}{t}\right)$, in the following sense: 
\begin{align*}
\int_0^1  m_H(x) R_H\left(x,\rho_H^{\infty}, \rho_C^\infty, \bar u_1, \bar u_2\right) n_H(t,x) \, dx  & =  \mathrm{O}\left(\frac{\ln(t)}{t}\right), \\
\int_0^1 m_C(x) R_C\left(x,\rho_C^{\infty}, \rho_H^\infty, \bar u_1, \bar u_2\right) n_C(t,x) \, dx & =\mathrm{O}\left(\frac{\ln(t)}{t}\right).
\end{align*}
\noindent
In particular, if $A_H$ is reduced to a singleton $x_H^\infty$, then 
\begin{equation*}
\forall \epsilon>0, \; \; \int_{[0,1] \setminus  [x_H^\infty- \epsilon, x_H^\infty + \epsilon]} n_H(t,x) \, dx  = \mathrm{O}\left(\frac{\ln(t)}{t}\right),
\end{equation*}
and similarly for $A_C$.

\end{corollary}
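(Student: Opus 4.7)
The plan is to revisit the Lyapunov computation from the proof of Theorem \ref{thm_asympt} in the specialized case of constant controls, where two simplifications occur. First, with $u_1 \equiv \bar u_1$ and $u_2 \equiv \bar u_2$, the error terms $E_H, E_C$ appearing in the decomposition \eqref{Decomp} are identically zero, so $B_i = \tilde B_i$ is nonpositive throughout. Second, $\frac{du_1}{dt} = \frac{du_2}{dt} = 0$, hence the Cesàro-type double integral involving $a(z), b(z)$ drops out of the key inequality. Returning to the chain $V(t)-V(0)\leq \frac{1}{2}\int_0^t G(s)\,ds$ and using $G(s)=G(t)-\int_s^t G'(z)\,dz$ together with \eqref{BoundG}, the argument of the fifth step collapses to
\begin{equation*}
\tfrac{1}{2}X^T M X + 2\bigl(\lambda_H |\tilde B_H| + \lambda_C |\tilde B_C|\bigr) \;\leq\; -2\,\dfrac{V(t)-V(0)}{t}.
\end{equation*}

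The lower bound \eqref{BoundV}, namely $V(t)\geq -C(\ln(t)+1)$, established in the third step, now immediately converts the right-hand side into an $O\!\left(\frac{\ln t}{t}\right)$ term. Since both $\tfrac{1}{2}X^T M X$ and $2(\lambda_H|\tilde B_H|+\lambda_C|\tilde B_C|)$ are nonnegative, each is individually $O\!\left(\frac{\ln t}{t}\right)$. Positive definiteness of $M$ (verified in the second step of the proof of Theorem \ref{thm_asympt} via $\det(M)>0$ and positive trace) gives $\|X\|^2\lesssim X^T M X$, yielding $\|X\|=O\!\left((\ln t/t)^{1/2}\right)$, which is the first assertion of the corollary. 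For the second assertion, I simply recall that
\begin{equation*}
\tilde B_H(t) \;=\; \int_0^1 m_H(x)\, R_H(x,\rho_H^\infty,\rho_C^\infty,\bar u_1,\bar u_2)\, n_H(t,x)\,dx,
\end{equation*}
and analogously for $C$; by construction of $I_H^\infty, I_C^\infty$ (inequalities \eqref{ineq11}--\eqref{ineq22}), the integrand is $\leq 0$, so the absolute value coincides with $-\tilde B_H$ and the bound $|\tilde B_i|=O(\ln t/t)$ is exactly what is claimed.

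For the final statement, suppose $A_H=\{x_H^\infty\}$. The function $x\mapsto R_H(x,\rho_H^\infty,\rho_C^\infty,\bar u_1,\bar u_2)$ is continuous on $[0,1]$, nonpositive, and vanishes only at $x_H^\infty$. Fix $\epsilon>0$ and set $K_\epsilon:=[0,1]\setminus[x_H^\infty-\epsilon, x_H^\infty+\epsilon]$; by compactness there exists $\delta_\epsilon>0$ with $-m_H(x)R_H(x,\rho_H^\infty,\rho_C^\infty,\bar u_1,\bar u_2)\geq \delta_\epsilon$ on $K_\epsilon$. Dropping the nonnegative contribution from the complement of $K_\epsilon$ in the integral yields
\begin{equation*}
|\tilde B_H(t)| \;\geq\; \delta_\epsilon \int_{K_\epsilon} n_H(t,x)\,dx,
\end{equation*}
so the preceding $O(\ln t/t)$ bound on $|\tilde B_H|$ transfers to $\int_{K_\epsilon} n_H(t,x)\,dx$, as required, and the argument for $n_C$ is identical. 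The only delicate point is really to check that the cancellation of $E_H, E_C$ and of the double integral in the fifth step leaves a clean upper bound controlled solely by $V(t)-V(0)$; once this is observed, the rates follow mechanically from the logarithmic lower bound on $V$.
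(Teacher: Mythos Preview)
Your proof is correct and follows essentially the same approach as the paper's own argument: specialize the fifth step of the proof of Theorem~\ref{thm_asympt} to constant controls so that the $E_i$ and the Ces\`aro double integral vanish, then use the lower bound \eqref{BoundV} on $V$ to read off the $O(\ln t/t)$ rate for each of the three nonpositive summands; the final Chebyshev-type bound via a positive lower bound for $-m_H R_H$ on $K_\epsilon$ is likewise identical in spirit. Your write-up is in fact slightly more explicit than the paper's in invoking the positive definiteness of $M$ to pass from $X^T M X = O(\ln t/t)$ to $\|X\| = O((\ln t/t)^{1/2})$.
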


\begin{proof}
The speed of convergence of $\left(\rho_H, \rho_C\right)$ and of the integrals can be obtained by rewriting  the end of the proof, namely that $- \frac{1}{2} X^T M X +2 \left( \lambda_H \tilde{B}_H + \lambda_C \tilde{B}_C\right)$ is bounded from below by $2 \frac{V(t)-V(0)}{t}$, which converges to $0$ as $\mathrm{O}\left(\frac{\ln(t)}{t}\right)$. Since each of those three terms is nonpositive, they all converge to $0$ at the previous speed, and the integrals of interest are nothing but the functions $\tilde{B}_H$ and $\tilde{B}_C$. \par 

For the last statement, we fix $\epsilon>0$ and denote $h:=- m_H R_H\left(\cdot, \rho_H^{\infty}, \rho_C^\infty, \bar u_1, \bar u_2\right) \geq 0$ on $[0,1]$, which by assumption vanishes at $x_H^\infty$ only. We choose $a>0$ small enough such that 
$a \ind{[0,1] \setminus  [x_H^\infty- \epsilon, x_H^\infty + \epsilon]} \leq h$ on $[0,1]$. This enables us to write 
$$ \int_{[0,1] \setminus  [x_H^\infty- \epsilon, x_H^\infty + \epsilon]} n_H(t,x) \, dx \leq \dfrac{1}{a} \int_0^1  m_H(x) R_H\left(x,\rho_H^{\infty}, \rho_C^\infty, \bar u_1, \bar u_2\right) n_H(t,x) \, dx = \mathrm{O}\left(\frac{\ln(t)}{t}\right). $$
The reasoning is the same if $A_C$ is reduced to a singleton.
\end{proof}
\begin{remark}
Although the Lyapunov functional gives us information on the speed of both phenomena in the sense defined above, it does not say whether one of the two is faster. However, if $A_H$ or $A_C$ is reduced to a singleton, the speed $\mathrm{O}\left(\frac{\ln(t)}{t}\right)$ obtained for the convergence to $0$ of the expression $- \frac{1}{2} X^T M X +2 \left( \lambda_H \tilde{B}_H + \lambda_C \tilde{B}_C\right)$ is almost optimal: there cannot exist any $\alpha>1$ such that this sum vanishes like $\mathrm{O}\left(\frac{1}{t^\alpha}\right)$.
This comes from the fact that if it were to hold true, $\frac{d V}{dt}$ would be integrable on the half-line, which would imply the convergence of $V$. This is not possible since either $V_H$ or $V_C$ goes to $-\infty$.
\end{remark}

\subsection{Mathematical simulations of the effect of constant drug doses}

Throughout the study, we will consider the following numerical data, taken from~\cite{Lorz2013a}:
$$r_H(x)=\frac{1.5}{1+x^2},\quad r_C(x)=\frac{3}{1+x^2},$$
$$d_H(x)=\frac{1}{2}(1-0.1x), \quad d_C(x)= \frac{1}{2}(1-0.3x),$$
$$a_{HH} = 1,\quad a_{CC}= 1,\quad a_{HC} = 0.07,\quad a_{CH} = 0.01,$$
$$\alpha_H = 0.01,\quad \alpha_C = 1, $$
$$ u_1^{\max} = 3.5,\quad u_2^{\max} = 7,$$
and the initial data
$$n_H(0,x) = K_{H,0} \exp(-(x-0.5)^2/\varepsilon),\quad n_C(0,x) = K_{C,0} \exp(-(x-0.5)^2/\varepsilon), $$
with $\varepsilon>0$ small (typically, we will take either $\varepsilon=0.1$ or $\varepsilon=0.01$), and where $K_{H,0}>0$ and $K_{C,0}>0$ are such that
$$
\rho_H(0) = 2.7, \; \; \rho_C(0) = 0.5.
$$
The value $\rho_H(0)$ is not the same as in~\cite{Lorz2013a}: it is chosen to be slightly below the equilibrium value of the system with $n_C\equiv 0$, $u_1\equiv0$, $u_2 \equiv 0$, in accordance with the fact that there is \textit{homeostasis} in a healthy tissue. Indeed, we start with a non-negligible tumour which must have (due to competition) slightly lowered the number of healthy cells with comparison to a normal situation. \par
We also define $\rho_{CS}(t) := \int_0^1 (1-x) n_C(t,x)\, dx$,
which may be seen as the total number at time $t$ of tumour cells that are sensitive, and
$\rho_{CR}(t) := \int_0^1 x n_C(t,x)\, dx$,
which may be seen as the total number at time $t$ of tumour cells that are resistant.

Of course, sensitivity/resistance being by construction a non-binary variable, the weights $x$ and $1-x$ are an example of a partition between a relatively sensitive class and a relatively resistant class in the cancer cell population; other choices might be made for these weights, e.g., $x^2$ and $1-x^2$. 

\paragraph{Discussion of the choice for $\mu_H$ and $\mu_C$.}
These functions measure the efficiency of the drugs treatment. The choice done in~\cite{Lorz2013a} is
$$\mu_H(x)= \frac{0.2}{0.7^2+x^2},\quad \mu_C(x)= \frac{0.4}{0.7^2+x^2}.$$
However, with this choice of functions, if we take constant controls $u_1$ and $u_2$, with
$$
u_1(t)=\mathrm{Cst}=u_1^{\max}=3.5,\qquad u_2(t)=\mathrm{Cst}=2,
$$
then we can kill all tumour cells (at least, they decrease exponentially to $0$), and no optimisation is necessary. 
The results of a simulation can be seen on Figure \ref{figancienmu}.

\begin{figure}[H]
\centerline{
\includegraphics[width=10.5cm]{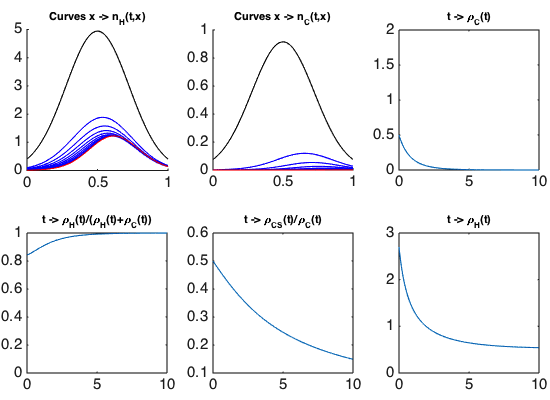}}
\caption{Simulation with $u_1(t)=\textrm{Cst}=3.5$ and $u_2(t)=\textrm{Cst}=2$, in time $T=10$. At the top, left and middle: evolution in time of the curves $x\mapsto n_H(t,x)$ and $x\mapsto n_C(t,x)$, with the initial conditions in black, and the final ones in red. At the right, top and bottom: graphs of $t\mapsto\rho_C(t)$ and of $t\mapsto\rho_H(t)$. At the bottom, left and middle: graphs of $t\mapsto\frac{\rho_H(t)}{\rho_H(t)+\rho_C(t)}$ and of $t\mapsto\frac{\rho_{CS}(t)}{\rho_C(t)}$.}
\label{figancienmu}
\end{figure}

On Figure \ref{figancienmu}, the population of tumour cells is Gaussian-shaped, decreases exponentially to $0$ while its center is being shifted to the right: it means that tumour cells become more and more resistant as time goes by. This is in agreement with the fact that cells acquire resistance to treatment when drugs are given constantly. However, although the proportion of sensitive cells $t\mapsto\frac{\rho_{CS}(t)}{\rho_C(t)}$ is quickly decreasing, the drugs are still efficient at killing the cells. This is not realistic, as it does not match the clinically observed saturation phenomenon. Most cancer cells have acquired resistance and any immediate further treatment should have no effect.

In the simulation above, there is no saturation because the function $\mu_C$ is continuous and positive over the whole interval $[0,1]$ and is not small enough close to $1$.
In order to model this saturation phenomenon, we choose to modify the model used in~\cite{Lorz2013a}, by modifying slightly the function $\mu_C$. The new function $\mu_C$ that will throughout be considered is defined by
$$
\mu_C (x) = \max\left( \frac{0.9}{0.7^2+0.6x^2}-1 , 0\right) .
$$
On Figure \ref{figmuC}, the former function $\mu_C$ is in blue, and the new one is in red.

\begin{figure}[H]
\centerline{\includegraphics[width=6.5cm]{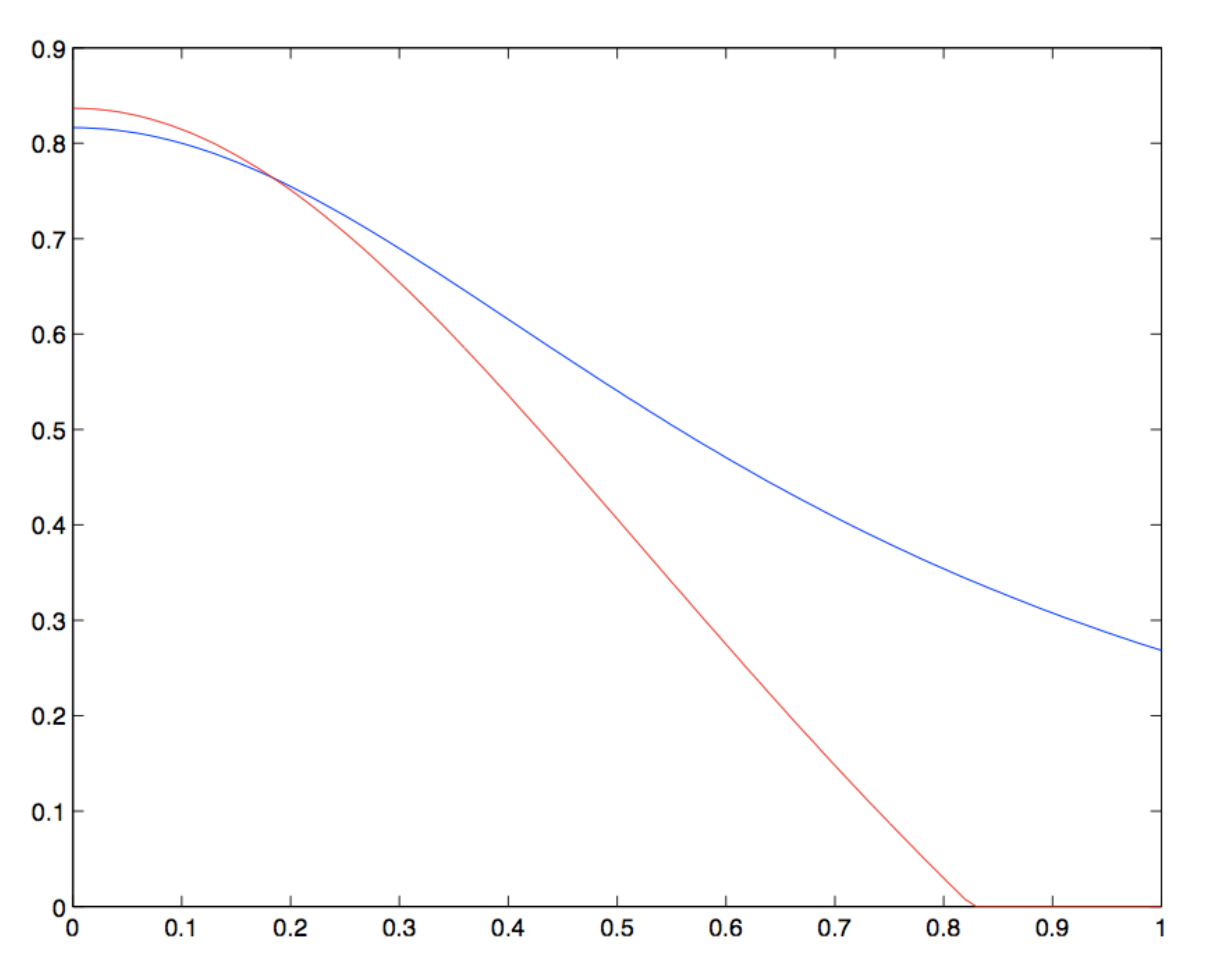}}
\caption{Former function $\mu_C$ in blue, and new function $\mu_C$ in red.}\label{figmuC}
\end{figure}

This new function $\mu_C$ is nonnegative and decreasing on $[0,1]$, and vanishes identically on a subinterval containing $x=1$.

With this new function, the simulation of Figure \ref{figancienmu}, with $u_1=\textrm{Cst}=3.5$ and $u_2=\textrm{Cst}=2$, is completely modified, as can be seen on Figure \ref{fignouveaumu}. Indeed, this time, the strategy consisting of taking constant controls $u_1(t)=\textrm{Cst}=3.5$ and $u_2(t)=\textrm{Cst}=2$ is not efficient anymore and does not allow for (almost) total eradication of the tumour. In sharp contrast, we observe on Figure \ref{figancienmu} that the tumour cells are growing again, moreover concentrating around some resistant phenotype.

\begin{figure}[H]
\centerline{
\includegraphics[width=10.5cm]{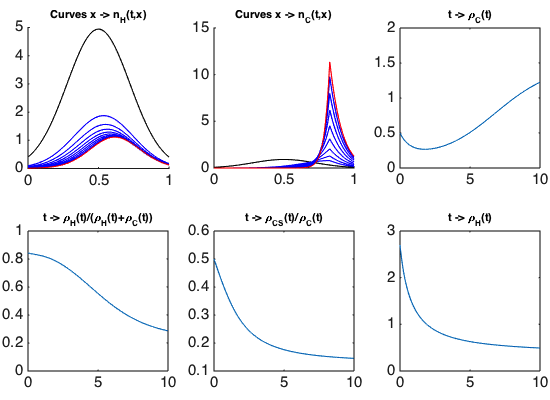}}
\caption{Simulation with $u_1(t)=\textrm{Cst}=3.5$ and $u_2(t)=\textrm{Cst}=2$, in time $T=10$, with the new function $\mu_C$.}\label{fignouveaumu}
\end{figure}
\paragraph{Conclusion on constant controls.} 
The simulations show that choosing  constant doses too high leads to the selection of resistant cells, and then, to regrowth of the cancer cell population if these cells can become  insensitive to the treatment. With the notations of Theorem \ref{thm_asympt}, it is because among constant controls, $(u_1^{max}, u_2^{max})$ does not minimise $\rho_C^\infty$. However, it is quite clear that choosing the optimal constant dose $(\bar u_1,\bar u_2)$ to minimise $\rho_C^\infty$ leaves room for improvement, as the choice $(u_1^{max}, u_2^{max})$ is still the optimal one for sensitive cells. Therefore, it makes sense to allow $u_1$ and $u_2$ to be any functions satisfying \re{contsyst} as in ({\bf OCP}), which we will study both from the theoretical and numerical points of view in the next two sections.

\section{Theoretical analysis of ({\bf OCP})}
\label{Section3}

Before analysing ({\bf OCP}), let us first consider a much simpler ODE model for which we can find the solution explicitly in order to develop some intuition.

\subsection{Simplified optimal control problems}
We consider the ODE 
\beq
\begin{split} 
\dfrac{d\rho}{dt} & =(r-d \rho(t)-\mu u(t)) \rho(t), \la{e.I} \\
\rho(0)& =\rho_0>0.
\end{split}
\eeq

\begin{enumerate}[label=(C\arabic*), start=1]
\item \la{c.1_max} Optimal control problem: minimise $\rho(T)$ over all possible solutions of (\ref{e.I}) with a $L^1$-constraint on $u$, {\it i.e.},
\beq
\int_0^T u(t)\,dt \le u^{1,max}.\la{e.1_max}
\eeq
\end{enumerate}

\begin{lemma}\label{L1}
The optimal solution for problem \ref{c.1_max} is 
$$u_{opt}=u^{1,max} \de_{t=T}.$$
\end{lemma}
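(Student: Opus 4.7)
The plan is to exploit the scalar, multiplicative structure of \eqref{e.I}. Since $\rho(t)>0$ for all $t\geq 0$, dividing \eqref{e.I} by $\rho$ and integrating from $0$ to $T$ yields the closed-form identity
$$
\ln\!\left(\frac{\rho(T)}{\rho_0}\right) \;=\; rT \;-\; d\int_0^T \rho(t)\,dt \;-\; \mu \int_0^T u(t)\,dt.
$$
Minimising $\rho(T)$ is thus equivalent to maximising $d\int_0^T \rho + \mu\int_0^T u$. Under the $L^1$-constraint \eqref{e.1_max} the second term is bounded above by $\mu\,u^{1,\max}$, so this constraint will be saturated at the optimum.

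For the first term, I would invoke a scalar comparison argument: since $u\geq 0$ and $\rho\geq 0$, one has $\dot\rho=(r-d\rho-\mu u)\rho \leq (r-d\rho)\rho$, so that $\rho(t)\leq\rho_f(t)$ on $[0,T]$, where $\rho_f$ denotes the drug-free solution starting from $\rho_0$. Hence $\int_0^T \rho \leq \int_0^T \rho_f$, and combining both bounds yields the \emph{universal lower bound}
$$
\rho(T) \;\geq\; \rho_0 \exp\!\left(rT - d\int_0^T \rho_f(t)\,dt - \mu\,u^{1,\max}\right),
$$
which is independent of the chosen admissible $u$.

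To show this lower bound is asymptotically reached, I would test the minimising sequence $u_\varepsilon := (u^{1,\max}/\varepsilon)\,\mathbf{1}_{[T-\varepsilon,T]}$, which is admissible and saturates the $L^1$-constraint. On $[0,T-\varepsilon]$ the corresponding trajectory $\rho_\varepsilon$ coincides with $\rho_f$, and on the short interval $[T-\varepsilon,T]$ integrating $\dot\rho_\varepsilon/\rho_\varepsilon$ together with the boundedness of $\rho_\varepsilon$ gives $\rho_\varepsilon(T)=\rho_\varepsilon(T-\varepsilon)\exp(O(\varepsilon)-\mu\,u^{1,\max})$, so that $\rho_\varepsilon(T)\to \rho_f(T)\,e^{-\mu u^{1,\max}}$ as $\varepsilon\to 0$, matching the lower bound. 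The main subtlety is thus interpretive rather than computational: the infimum is not attained inside $L^\infty(0,T)$, and the statement $u_{opt}=u^{1,\max}\delta_{t=T}$ must be understood as the weak-$\ast$ limit of this minimising sequence of measures. Intuitively, the multiplicative damping $-\mu u\rho$ is most effective precisely when $\rho$ is large, which is why postponing the drug until the very end is optimal.
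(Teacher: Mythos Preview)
Your proof is correct and follows essentially the same route as the paper: both arguments combine the comparison $\rho\leq\rho_f$ with the drug-free solution, the saturation of the $L^1$-constraint, and the same minimising sequence $u_\varepsilon=(u^{1,\max}/\varepsilon)\mathbf{1}_{[T-\varepsilon,T]}$ to show that the infimum equals $\rho_f(T)\,e^{-\mu u^{1,\max}}$ and is not attained. The only cosmetic difference is that you start from the integrated log-identity, whereas the paper writes the same inequalities directly in exponential form.
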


\begin{remark}
\label{Relaxed}
The statement can be misleading: we actually prove that there is no optimal solution, but rather that the problem leads to an infimum. Still, writing $u_{opt}=u^{1,max} \de_{t=T}$ makes sense as a way to obtain the infimum is to take a family $(u_\epsilon)_{\epsilon>0}$ in $L^1$ which converges to $u_{opt}$, for example $u_\epsilon:=\frac{1}{\epsilon} u^{1,max} \mathds{1}_{[T-\epsilon,T]}$.
\end{remark}

Adding another the constraint, we have a second optimal control problem
\begin{enumerate}[label=(C\arabic*), start=2]
\item \la{c.1_max_infty_max} minimise $\rho(T)$ over all possible solutions of (\ref{e.I}) with the  $L^1$-constraint \re{e.1_max} and a $L^\infty$-constraint 
\beq
 u \le u^{\infty,max}.\la{e.infty_max}
\eeq
\end{enumerate}
We assume $u^{\infty,max} T > u^{1,max}$, since otherwise it is clear that the optimal strategy is $u^{\infty,max} \ind{[0,T]}$.
\begin{lemma}\label{Linfty}
We define $T_1(T):=T-\frac{u^{1,max}}{u^{\infty,max}}$.
The optimal solution for problem \ref{c.1_max_infty_max}  is 
\beq
u_{op}=u^{\infty,max} \ind{[T_1,T]}. \la{e.u_opt}
\eeq
\end{lemma}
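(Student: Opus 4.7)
The plan is to linearise the dynamics by setting $y := 1/\rho$, which turns (\ref{e.I}) into the linear scalar ODE $y' + (r - \mu u) y = d$ with $y(0) = 1/\rho_0$. Introducing $V(t) := \int_0^t u(s)\,ds$, the integrating factor $e^{rt - \mu V(t)}$ yields, after integration from $0$ to $T$ and the change of variable $\tau = T-s$,
\[
\f{1}{\rho(T)} \; = \; \f{e^{\mu V(T) - rT}}{\rho_0} \; + \; d \int_0^T e^{-r\tau}\, e^{\mu(V(T) - V(T-\tau))}\,d\tau.
\]
Minimising $\rho(T)$ thus amounts to \emph{maximising} the right-hand side of this identity over all admissible controls.

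Next I would observe that each of the quantities $V(T)$ and $V(T) - V(T-\tau)$ entering the formula admits an upper bound that is uniform over the admissible set: \re{e.1_max} gives $V(T) \le u^{1,max}$, while \re{e.1_max} together with \re{e.infty_max} implies, for every $\tau \in [0,T]$,
\[
V(T) - V(T-\tau) \; = \; \int_{T-\tau}^T u(s)\, ds \; \leq \; \min\bigl(u^{\infty,max}\, \tau,\; u^{1,max}\bigr).
\]
The crux of the argument is then the direct verification that the candidate $u_{op} = u^{\infty,max}\, \ind{[T_1,T]}$ with $T_1 = T - u^{1,max}/u^{\infty,max}$ achieves equality in both bounds simultaneously for every $\tau$: indeed $V_{op}(T) = u^{\infty,max}(T - T_1) = u^{1,max}$, while a short case split gives $V_{op}(T) - V_{op}(T-\tau) = u^{\infty,max}\tau$ for $\tau \leq T - T_1$ and $V_{op}(T) - V_{op}(T-\tau) = u^{1,max}$ for $\tau \geq T - T_1$.

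Since the integrand $e^{-r\tau}\, e^{\mu(V(T) - V(T-\tau))}$ and the prefactor $e^{\mu V(T) - rT}$ are both monotonically increasing in the quantities just bounded, this pointwise saturation immediately gives $1/\rho(T) \leq 1/\rho_{op}(T)$, hence $\rho_{op}(T) \leq \rho(T)$, for every admissible $u$. I do not anticipate any serious obstacle: the trick $y = 1/\rho$ linearises the Bernoulli equation and converts what looks like a non-trivial optimal control problem into a pointwise monotonicity argument on the primitive $V$. The only delicate piece of bookkeeping is the two-case analysis in $\tau$ that certifies the saturation of $V_{op}(T) - V_{op}(T-\tau)$ across the whole interval; strict monotonicity in these bounds further yields uniqueness of the optimal control, in contrast with the situation of Lemma \ref{L1} where the candidate had to be understood as a relaxed limit.
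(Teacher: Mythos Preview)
Your proof is correct and takes a genuinely different route from the paper's. The paper proceeds via the Pontryagin maximum principle: it augments the state with $y'=u$ so that the $L^1$ constraint becomes a terminal constraint $y(T)\le u^{1,max}$, writes the Hamiltonian, derives the adjoint equations and transversality conditions, and shows that the switching function $\phi=p_y-\mu p_\rho\rho$ is strictly increasing, forcing a single bang--bang switch whose location is then fixed by saturation of the $L^1$ constraint.

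Your argument bypasses the PMP entirely. The Bernoulli substitution $y=1/\rho$ linearises the dynamics and yields the explicit representation
\[
\frac{1}{\rho(T)}=\frac{e^{\mu V(T)-rT}}{\rho_0}+d\int_0^T e^{-r\tau}\,e^{\mu(V(T)-V(T-\tau))}\,d\tau,
\]
after which the problem reduces to a pointwise monotonicity argument: every quantity appearing on the right is bounded above using only \re{e.1_max} and \re{e.infty_max}, and the candidate $u_{op}$ saturates all bounds simultaneously. This is more elementary and gives existence, optimality, and uniqueness (via strict monotonicity of the exponential and the fact that $d>0$ makes the integral term sensitive to $V(T)-V(T-\tau)$ for a.e.\ $\tau$) in one stroke, without invoking necessary conditions or worrying about normality of the extremal. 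The PMP approach, on the other hand, is the natural template for the more complicated constrained problems treated later in Section~\ref{Section3}, where no such explicit representation is available; in that sense the paper's proof serves as a warm-up for Proposition~\ref{SecondPhase}, whereas yours is a cleaner self-contained argument for this particular lemma.
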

The proofs of these two results can be found in Appendix \ref{AppB}.

\medskip
\begin{remark}
The previous lemmas on simplified equations give some insight on two important features: \par
$\bullet$ for large times, constant controls lead to concentration, as evidenced by Theorem \ref{thm_asympt}. As explained more rigorously further below in Lemma \ref{Concentrated}, when the populations are concentrated on some single phenotypes, the integro-differential equations boil down to ODEs, for which the last results and standard techniques from optimal control theory apply. \par
$\bullet$ for ODE models, it is optimal to use the maximal amount of drug at the end of the time-window if there is a $L^1$ constraint on the control. Avoiding the emergence of resistance will indirectly act as some $L^1$ constraint, which is why this result also provides some interesting intuition on the optimal control problem ({\bf OCP}).
 \end{remark}

\subsection{Assumptions and further remarks}

Let us start by mentioning a possible alternative state constraint for ({\bf OCP}).
\begin{remark}
Alternatively to (\ref{cont_HC}), we might want to directly control the number of cancer cells and replace (\ref{cont_HC}) by 
\begin{equation}\label{cont_rhoC}
\rho_C(t) \leq C^{max}
\end{equation}
for some $C^{max}>0$. The set of constraints (\ref{cont_HC}), (\ref{cont_H}) on the one hand, and (\ref{cont_rhoC}), (\ref{cont_H}) on the other hand, are similar. Although we focus on the first one in the sequel, our analysis applies to the other set of constraints.
\end{remark}

We now make several important additional assumptions which will be used throughout this section, all relying on the notations of Theorem \ref{thm_asympt}. Our first assumption is that
\begin{equation}
\label{OneDirac}
\textit{for any $0\leq \bar u_1 \leq u_1^{max}$, $0\leq \bar u_2 \leq u_2^{max}$, $A_H$ and $A_C$ are reduced to singletons.}
\end{equation}

\noindent
In this case, recall that Theorem \ref{thm_asympt} provides a mapping $(\bar u_1,\bar u_2) \mapsto (x_H^\infty,x_C^\infty, \rho_H^\infty, \rho_C^\infty)$, and with a slight abuse of notation, we will omit the dependence in $(\bar u_1,\bar u_2)$ in the following final assumptions: \par

\begin{quote}
\textit{
whenever $\bar u_1$, $\bar u_2$ are admissible (i.e., such that neither the constraint (\ref{cont_HC}) nor the constraint (\ref{cont_H}) is violated), we require that the solution of the ODE system 
\begin{equation}\label{ODEMTD}
\begin{split}
\frac{d\rho_H}{dt}&= 
R_H\left(x_H^\infty, \rho_H, \rho_C, u_1^{max}, u_2^{max}\right) \rho_H, \\
\frac{d\rho_C}{d t}&= 
R_C\left(x_C^\infty, \rho_C, \rho_H, u_1^{max}, u_2^{max}\right)\rho_C, \\
\end{split}
\end{equation}
with initial data $(\rho_C^{\infty}, \rho_H^{\infty})$, has the following properties:}
\begin{equation}
\label{Decreasing}
\ddt \rho_C<0, \; \ddt \rho_H<0 \; \, \textit{and} \; \, \ddt \f{\rho_C}{ \rho_H}<0.
\end{equation}

\end{quote}
The assumption (\ref{Decreasing}) means that both populations of cells decrease but that the treatment is more efficient on cancer cells. In some sense, this is a curability assumption and it will be crucial in the sequel.
\smallskip

We now motivate the choice of restricting our attention to the class $\mathcal{B}_T$ by giving two results.
\subsection{Optimality of a concentrated initial population for a small time}

Here, we assume that for any $0 \leq \rho_C \leq \rho_C^{max}$, $0 \leq \rho_H \leq \rho_H^{max}$, $0 \leq u_1 \leq u_1^{max}$ and $0 \leq u_2 \leq u_2^{max}$,
\begin{equation}
\label{UniqueMinimum}
\text{$x \mapsto R_C(x, \rho_C, \rho_H, u_1, u_2)$ has a unique minimum.}
\end{equation}

For a given initial amount of cancer cells $\rho_C^0>0$, we define: 
\[ A_{\rho_C^0}:=\left\{ n_C^0\in{\cal{M}}^1(0,1) \; \text{such that} \int_0^1 n_C^0(x) \, dx=\rho_C^0 \right\}. \] 
For $n_C^0 \in{A_{\rho_C^0}}$, and given $n_H^0$ in $\cal{M}$$^1(0,1)$, final time $t_f > 0$, and controls $u_1$, $u_2$ in $BV(0,t_f)$ satisfying (\ref{u_max}), we consider the associated trajectory $(n_H(\cdot,x), n_C(\cdot,x))$ on $[0,t_f]$ solution to the system \re{contsyst} starting from $(n_H^0, n_C^0)$.\par
We consider the following minimisation problem 
\begin{equation}
\label{OCPDirac}
\inf_{\substack{0\leq u_1(t)\leq u_1^{max} \\0 \leq u_2(t) \leq u_2^{max}}} \inf_{n_C^0 \in A_{\rho_{ C^0}}} \rho_C(t_f).
\end{equation}
In other words, for a fixed initial tumour size, we aim at tackling the following question: 
\begin{quote}
\textit{what is the cancer cells' best possible repartition in phenotype}?
\end{quote}
A simpler (and instantaneous) version of the previous optimisation problem for $t_f$ small is 
\begin{equation}
\label{OCPDirac2}
\inf_{\substack{0\leq u_1\leq u_1^{max} \\0 \leq u_2 \leq u_2^{max}}} \inf_{n_C^0 \in A_{\rho_{ C^0}}} \frac{d\rho_C}{dt}(0),
\end{equation}
for which the solution is easily obtained, and given in the following proposition.
\medskip
\begin{proposition}
\label{DiracStart}
Let $g:= R_C\left(\cdot, \rho_C^0, \rho_H^0, u_1^{max}, u_2^{max}\right)$.
We define $x_C$ by $\{x_C\}:=\arg \min g$ and $\tilde{n}_C^0:= \rho_C^0 \delta_{x_C}$. The optimal solution for the optimisation problem (\ref{OCPDirac2}) is given by 
\begin{equation}
\label{SolveDiracStart}
\left(u_1^{max}, u_2^{max}, \rho_C^0 \delta_{x_C}\right).
\end{equation}
\end{proposition}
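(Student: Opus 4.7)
The approach is to exchange the two infima: for fixed $(u_1,u_2)$ I would first solve the inner problem in $n_C^0$, which is a linear programme over positive measures of fixed total mass, and then optimise the resulting scalar function over $(u_1,u_2)$ using a simple monotonicity argument.

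\emph{Step 1 (rewriting the cost).} Integrating the $n_C$--equation in $x$ at time $t=0$ yields
\[
\ddt \rho_C(0) \;=\; \int_0^1 R_C\bigl(x, \rho_C^0, \rho_H^0, u_1, u_2\bigr)\, n_C^0(x)\, dx.
\]
The quantities $\rho_C^0$ and $\rho_H^0$ are fixed data, so $x\mapsto R_C(x,\rho_C^0,\rho_H^0,u_1,u_2)$ is a continuous function on $[0,1]$ depending on the two scalar parameters $u_1,u_2$ only.

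\emph{Step 2 (inner infimum over $n_C^0\in A_{\rho_C^0}$).} For fixed $(u_1,u_2)$ the expression above is a linear functional of $n_C^0$. For any positive Radon measure of total mass $\rho_C^0$ one has
\[
\int_0^1 R_C\bigl(x,\rho_C^0,\rho_H^0,u_1,u_2\bigr)\, n_C^0(x)\, dx \;\geq\; \rho_C^0 \, \min_{x\in[0,1]} R_C\bigl(x,\rho_C^0,\rho_H^0,u_1,u_2\bigr),
\]
with equality if and only if $n_C^0$ is supported in the set of minimisers. Assumption \eqref{UniqueMinimum} guarantees that this set is a singleton $\{x^\star(u_1,u_2)\}$, so the inner optimum is attained uniquely by the Dirac mass $\rho_C^0\,\delta_{x^\star(u_1,u_2)}$.

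\emph{Step 3 (outer infimum over $(u_1,u_2)$).} From
\[
R_C(x,\rho_C^0,\rho_H^0,u_1,u_2) \;=\; \f{r_C(x)}{1+\alpha_C u_2} - d_C(x)\, I_C^0 - u_1 \mu_C(x),
\]
with $I_C^0 := a_{CH}\rho_H^0 + a_{CC}\rho_C^0$ independent of $(u_1,u_2)$, and using $r_C>0$, $\mu_C\geq 0$, $\alpha_C>0$, the partial derivatives $\p_{u_1} R_C$ and $\p_{u_2} R_C$ are both nonpositive pointwise in $x$. Consequently the scalar function $(u_1,u_2)\mapsto \min_x R_C(x,\rho_C^0,\rho_H^0,u_1,u_2)$ is nonincreasing in each variable on $[0,u_1^{\max}]\times [0,u_2^{\max}]$, and therefore attains its infimum at $(u_1^{\max},u_2^{\max})$. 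Substituting this choice back into Step 2 and recalling the definition of $g$ and $x_C$ gives precisely \eqref{SolveDiracStart}.

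I do not anticipate any serious difficulty. The only care needed is in accepting a Dirac mass as an admissible initial datum, in line with Remark \ref{Relaxed}; once this is granted, the proof relies only on the linearity of the cost in $n_C^0$ and the pointwise monotonicity of $R_C$ in $(u_1,u_2)$, while \eqref{UniqueMinimum} ensures that the optimum is attained at a single point $x_C$.
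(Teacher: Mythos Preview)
Your argument is correct and uses the same two ingredients as the paper: the linearity of the cost in $n_C^0$ (so the inner optimum is a Dirac mass at the minimiser) and the pointwise monotonicity of $R_C$ in $(u_1,u_2)$ (so the outer optimum is at the maximal doses). The only difference is the order in which the two infima are taken: the paper first bounds $R_C(\cdot,\rho_C^0,\rho_H^0,u_1,u_2)\geq g$ pointwise and then minimises $\int g\, n_C^0$ over measures, whereas you first reduce to $\rho_C^0\min_x R_C$ and then minimise over $(u_1,u_2)$; both orderings give the same value and the same optimiser.
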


\begin{proof}
For any $0 \leq u_1 \leq u_1^{max}$, $0 \leq u_2 \leq u_2^{max}$, $n_C^0 \in  A_{\rho_{ C^0}}$, 
\begin{align*}
\dfrac{d \rho_C}{d t} (0)  =\int_0^1 R_C\left(x, \rho_C^0, \rho_H^0, u_1, u_2 \right) n_C^0(x) \, dx \geq  \int_0^1 g(x) \, n_C^0(x) \, dx 
\end{align*}
with equality if and only if $u_1=u_1^{max}$, $u_2=u_2^{max}$. 
\par
We also have $\int_0^1 g(x) \, n_C^0(x) \, dx \geq \int_0^1 g(x_C) \, n_C^0(x) \, dx = g(x_C) \rho_C^0$ and it remains to prove that there is equality if and only if $n_C^0 = \rho_C^0 \delta_{x_C}$. If $n_C^0 \neq \rho_C^0 \delta_{x_C}$there exists $a\in{\text{supp}  \left(n_C^0\right)}$, $a \neq x_C$: it is therefore possible to find $\epsilon>0$ such that both $x_C \not \in [a-\epsilon, a+\epsilon]$ and $\int_{[a-\epsilon, a+\epsilon]} n_C^0(x) \, dx > 0$.
\par 
This implies 
\begin{align*}
\int_0^1 \left( g(x) - g(x_C) \right) n_C^0(x) \, dx \geq 
\int_{[a-\epsilon, a+\epsilon]} \left(g(x) - g(x_C)\right)n_C^0(x) \, dx > 0, 
\end{align*}
which concludes the proof.
\end{proof}

For ({\bf OCP}), the previous Proposition means that, very close to $T$, the best shape of the cancer cell density $n_C(t,\cdot)$ is a Dirac mass. As it was proved in Theorem \ref{thm_asympt}, it is possible (in arbitrarily large time) to reach Dirac masses with constant controls. The combination of these two results is our motivation for the restriction to the set $\mathcal{B}_T$.

\subsection{Reduction of IDEs to ODEs at the end of the long first phase}
Because of the previous result, it makes sense to steer the cancer dell density as close as possible to a Dirac mass. As it was proved in Theorem \ref{thm_asympt}, it is possible (in large time limit) to reach Dirac masses with constant controls. Our aim is now to prove that if we give constant controls $(\bar u_1, \bar u_2)$ for a long time, the dynamics of the total number of cells $(\rho_H, \rho_C)$ are arbitrarily close to being driven by a system of ODEs, a result which comes from the concentration of the IDE on $(x_H^\infty, x_C^\infty)$. The rigorous statement is given hereafter:

\begin{lemma}
\label{Concentrated}
We fix $T_2>0$, $0 \leq \bar u_1\leq u_1^{max}$ and $0\leq  \bar u_2\leq u_2^{max}$. We consider any controls $(u_1,u_2)$ defined on $[0, T_1 + T_2]$ as follows: they are constant equal to $(\bar u_1, \bar u_2)$ on $[0,T_1]$, and any $BV$ functions on $[T_1, T_1 + T_2]$ which satisfy (\ref{u_max}). 
Let $(n_H, n_C)$ be the solution of (\ref{contsyst}) on $[0, T_1 + T_2]$, with corresponding $(\rho_H, \rho_C)$. \par
Then 
\[
\lim_{T_1 \rightarrow +\infty} \sup_{[T_1, T_1 + T_2]} \max\left(|\rho_H - \tilde{\rho}_H|, |\rho_C - \tilde{\rho}_C|\right) = 0,
\] 
where $(\tilde{\rho}_H, \tilde{\rho}_C)$ solves the controlled ODE system
\begin{equation}
\begin{split}
\frac{d\tilde{\rho}_H}{dt}&= 
R_H\left(x_H^\infty, \tilde{\rho}_H, \tilde{\rho}_C, u_1, u_2\right) \tilde{\rho}_H, \\
\frac{d\tilde{\rho}_C}{d t}&= 
R_C\left(x_C^\infty, \tilde{\rho}_C,\tilde{\rho}_H, u_1, u_2\right) \tilde{\rho}_C, \\
\end{split}
\end{equation}
defined on $[T_1, T_1 +T_2]$, starting at $T_1$ from $(\rho_H(T_1), \rho_C(T_1))$.
\end{lemma}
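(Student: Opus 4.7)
The plan is to rewrite the equation for $(\rho_H, \rho_C)$ as a perturbation of the target ODE system and then apply Gronwall's lemma. Integrating \eqref{contsyst} in $x$ gives
\begin{equation*}
\dfrac{d\rho_H}{dt} = R_H(x_H^\infty, \rho_H, \rho_C, u_1, u_2)\, \rho_H + D_H(t),
\end{equation*}
where
\begin{equation*}
D_H(t) := \int_0^1 \bigl[R_H(x, \rho_H, \rho_C, u_1, u_2) - R_H(x_H^\infty, \rho_H, \rho_C, u_1, u_2)\bigr] n_H(t,x) \, dx,
\end{equation*}
and similarly for $\rho_C$. Since $(\tilde{\rho}_H, \tilde{\rho}_C)$ solves the same system with $D_H \equiv D_C \equiv 0$ and shares the initial condition $(\rho_H(T_1), \rho_C(T_1))$ at time $T_1$, and since the right-hand side of the limiting ODE is Lipschitz in $(\rho_H, \rho_C)$ on the bounded region swept by the solutions, a standard Gronwall estimate gives
\begin{equation*}
\sup_{[T_1, T_1+T_2]}\bigl(|\rho_H - \tilde{\rho}_H| + |\rho_C - \tilde{\rho}_C|\bigr) \leq C(T_2) \sup_{[T_1, T_1 + T_2]}\bigl(|D_H| + |D_C|\bigr),
\end{equation*}
with $C(T_2)$ independent of $T_1$. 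It therefore suffices to prove that $\sup|D_H|$ and $\sup|D_C|$ on $[T_1, T_1 + T_2]$ tend to $0$ as $T_1 \to +\infty$.

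To bound $D_H$, I would fix a small $\eta > 0$ and split the integration domain into $I_\eta := (x_H^\infty - \eta, x_H^\infty + \eta)$ and its complement. By Lipschitz continuity of $R_H$ in $x$ (uniformly in the other arguments, which range over bounded sets), the contribution from $I_\eta$ is bounded by a constant times $\eta$ times $\rho_H(t)$, which can be made arbitrarily small by choosing $\eta$ small. The complement is handled by propagating the concentration provided by the first phase: since the IDE is, pointwise in $x$, a scalar linear ODE in $t$ with uniformly bounded coefficient, one has the immediate estimate $n_H(t,x) \leq e^{KT_2} n_H(T_1,x)$ for $t \in [T_1, T_1 + T_2]$, whence
\begin{equation*}
\int_{[0,1]\setminus I_\eta} n_H(t,x)\, dx \leq e^{KT_2} \int_{[0,1]\setminus I_\eta} n_H(T_1, x)\, dx.
\end{equation*}
The right-hand side vanishes as $T_1 \to +\infty$ by the corollary following Theorem \ref{thm_asympt}, applicable because the controls are constant on $[0, T_1]$ and $A_H = \{x_H^\infty\}$ by assumption \eqref{OneDirac}; the rate is $\mathrm{O}(\ln(T_1)/T_1)$. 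The treatment of $D_C$ is identical.

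The conclusion follows from a standard $\varepsilon/2$ argument: given $\varepsilon > 0$, first choose $\eta$ small enough to kill the inner contribution uniformly in $t$, then pick $T_1$ large enough so that the tail contribution (multiplied by the fixed factor $e^{KT_2}$) is also small. I expect the only (and mild) subtlety to be the bookkeeping of these two independent error sources; this is harmless precisely because $T_2$ is held fixed, so that the Gronwall factor and the propagation factor $e^{KT_2}$ remain bounded as $T_1 \to +\infty$. Beyond that, the argument reduces to the linear-ODE estimate used to control the tail and the concentration information already produced by the Lyapunov analysis of Theorem \ref{thm_asympt}.
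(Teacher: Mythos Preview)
Your proposal is correct and follows essentially the same route as the paper: write $\rho_H'$ as $R_H(x_H^\infty,\rho_H,\rho_C,u_1,u_2)\rho_H$ plus a concentration defect, control the defect via the pointwise bound $n_H(t,x)\leq C\,n_H(T_1,x)$ on $[T_1,T_1+T_2]$, and close with Gronwall (the paper writes the differential inequality for $w=\max(|\rho_H-\tilde\rho_H|,|\rho_C-\tilde\rho_C|)$ explicitly, but the content is the same). The only minor variation is in how the defect is shown to vanish: the paper tests $n_H(T_1,\cdot)$ against the continuous function $x\mapsto|\phi(x)-\phi(x_H^\infty)|$ and invokes weak convergence to $\rho_H^\infty\delta_{x_H^\infty}$ directly, whereas you split the domain into $I_\eta$ and its complement and use the corollary on the tail; both arguments are equivalent and equally short.
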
 

\begin{proof}
Let $\epsilon>0$.
We focus on the equation on $n_H$ which we integrate in $x$ for any $t \in{[T_1, T_1+T_2]}$: 
\begin{align*}
\frac{d\rho_H}{dt} & = \int_0^1 R_H(x, \rho_H, \rho_C, u_1,u_2) n_H(t,x) \,dx \\
& = R_H\left(x_H^\infty, \rho_H, \rho_C, u_1, u_2\right) \rho_H   \\
&\hspace{3.1cm}+ \int_0^1 \left(R_H(x, \rho_H, \rho_C, u_1,u_2)- 
R_H\left(x_H^\infty, \rho_H, \rho_C, u_1, u_2\right)\right) n_H(t,x) \, dx 
\end{align*}

For the first term, we write 
\begin{align*}
R_H (x_H^\infty, \rho_H,  \rho_C, u_1, u_2)\rho_H &= R_H(x_H^\infty, \rho_H,\rho_C, u_1, u_2) \tilde{\rho}_H  
+ R_H(x_H^\infty, \rho_H, \rho_C, u_1,u_2) (\rho_H- \tilde{\rho}_H) \\
& = \frac{d\tilde{\rho}_H}{dt} + \tilde{\rho}_H d_H(x_H^\infty) \left(-a_{HH} (\rho_H- \tilde{\rho}_H)-a_{HC} (\rho_C- \tilde{\rho}_C) \right) \\
&\hspace{4.5cm}+  R_H\left(x_H^\infty, \rho_H, \rho_C, u_1, u_2\right) (\rho_H - \tilde{\rho}_H)
\end{align*}
This means we end up with
\begin{align*}
\frac{d}{dt}(\rho_H - \tilde{\rho}_H) & = \tilde{\rho}_H d_H(x_H^\infty) \left(-a_{HH} (\rho_H- \tilde{\rho}_H)-a_{HC} (\rho_C- \tilde{\rho}_C)\right)   \\
&+ R_H\left(x_H^\infty, \rho_H, \rho_C, u_1, u_2\right) (\rho_H - \tilde{\rho}_H) \\
&\hspace{1.8cm} + \int_0^1 \left(R_H(x, \rho_H, \rho_C, u_1,u_2)- 
R_H\left(x_H^\infty, \rho_H, \rho_C, u_1, u_2\right)\right) n_H(t,x) \, dx.
\end{align*}

We look at the last term separately: the first two ones are linked to the discrepancy between $\rho$ and $\tilde{\rho}$, while the last one will be small because $n_H$ is concentrated if $T_1$ is large enough. Setting $w:= \max\left(|\rho_H - \tilde{\rho}_H|, |\rho_C - \tilde{\rho}_C|\right)$, we have the differential inequality 
\begin{equation}
\label{Max}
\frac{d}{dt} |\rho_H - \tilde{\rho}_H| \leq C w + \int_0^1 \left(R_H(x, \rho_H, \rho_C, u_1,u_2)- 
R_H\left(x_H^\infty, \rho_H, \rho_C, u_1, u_2\right)\right) n_H(t,x) \, dx 
\end{equation}
for some constant $C>0$.
The last term can be decomposed as 
\begin{align*}
\frac{1}{1+\alpha_H u_2}&  \int_0^1\left(r_H(x) - r_H(x_H^\infty)\right) n_H(t,x) \,dx   \\
&- u_1   \int_0^1\left(\mu_H(x) - \mu_H(x_H^\infty)\right) n_H(t,x) \,dx - I_H  \int_0^1\left(d_H(x) - d_H(x_H^\infty)\right) n_H(t,x) \,dx.
\end{align*}
Note that $u_1$, $\frac{1}{1+\alpha_H u_2}$ and $I_H$ are all bounded on $[T_1, T_1+T_2]$. Thus, if for any generic function $\phi$, $\int_0^1\left(\phi_H(x) - \phi_H(x_H^\infty)\right) n_H(t,x) \,dx$ is arbitrarily small, so is the last quantity. To that end, we write the solution of the IDE in exponential form 
\[
n_H(t,x) = n_H(T_1,x) \exp \left(\int_{T_1}^t R_H\left(x, \rho_H(s), \rho_C(s), u_1(s), u_2(s)\right) \, ds\right),
\]
where the exponential is uniformly bounded on $[0,1]\times [T_1, T_1+T_2]$, which means that $\left| \int_0^1\left(\phi_H(x) - \phi_H(x_H^\infty)\right) n_H(t,x) \,dx \right| \leq C \int_0^1 \left|\phi_H(x) - \phi_H(x_H^\infty) \right| n_H(T_1,x) \,dx $. Since \\$n_H(T_1,\cdot)$ converges to $\rho_H^\infty \delta_{x_H^\infty}$ in $\cal{M}$$^1(0,1)$ as $T_1$ goes to $+ \infty$, this quantity is arbitrarily small. Plugging this estimate into (\ref{Max}) and writing a similar inequality for the equations on the cancer cells, we obtain for $T_1$ large enough $\frac{dw}{dt}  \leq C w + \epsilon$.
We conclude by applying the Gronwall lemma, together with the fact that $w(T_1)=0$. 
\end{proof}

\subsection{Analysis of the second phase}

According to the previous results, for large $T$ and admissible constant controls $(\bar u_1, \bar u_2)$, we arrive at concentrated populations whose dynamics are driven by a system of ODEs. This naturally leads to considering the following optimal control problem, on the resulting ODE concentrated in $\left(x_H^\infty, x_C^\infty\right)$, starting from $\left(\rho_H^\infty, \rho_C^\infty\right)$ at $t=0$. 
For readability, we write $g_H$ for $g_H(x_H^\infty)$ (resp., $g_C$ for $g(x_C^\infty))$ for any function $g_H$ (resp., $g_C$), and we stress that all assumptions made in this subsection are made for all possible admissible constant controls $(\bar u_1, \bar u_2)$. \par 
The ODE system of equations now reads
\begin{equation}
\dfrac{d \rho_H}{dt} = 
\bigg(\underbrace{\frac{r_H}{1+\alpha_H u_2} - d_H I_H- u_1 \mu_H}_{R_H}\bigg) \rho_H, \;
\dfrac{d \rho_C}{dt} = 
\bigg(\underbrace{\frac{r_C}{1+\alpha_C u_2} - d_C I_C- u_1 \mu_C}_{R_C}\bigg) \rho_C.
\end{equation}
For a given $T_2^M>0$, we investigate the optimal problem of minimising $\rho_C(t_f)$ for $t_f \leq T_2^M$ and controls $(u_1, u_2)$ which satisfy (\ref{u_max}), as well as the constraints (\ref{cont_H}) and (\ref{cont_HC}). The constraint (\ref{cont_HC}) rewrites $\frac{\rho_C}{\rho_H} \leq \gamma$ with \[\gamma:= \frac{1-\theta_{HC}}{\theta_{HC}}.\]
Assume that there exists an optimal solution which is the concatenation of free and constrained arcs (either on the constraint (\ref{cont_H}) or (\ref{cont_HC})), with associated times $(t_i)_{1 \leq i \leq M}$. In particular, we thus assume without loss of generality that the parameters are such that 
\begin{equation}
\label{OneOnly}
\textit{both constraints do not saturate simultaneously on an optimal arc.}
\end{equation} 
Then, by the Pontryagin maximum principle for an optimal control problem with state constraints (see~\cite{Vinter2000}), there exists a bounded variation adjoint vector $p=(p_H, p_C)$ defined on $[0,t_f]$, a scalar $p^0 \leq 0$, non-negative functions $\eta_1$ and $\eta_2$ and non-negative scalars $\nu_i$, $i=1,\ldots,M$ such that if we define the Hamiltonian function by
\begin{align*}
H(\rho_H&,\rho_C, p_H, p_C, u_1,u_2) \\
:=& \; p_H R_H \rho_H + p_C R_C \rho_C + \eta_1 (\theta_H \rho_H^0 - \rho_H) + \eta_2 (\rho_C - \gamma \rho_H)\\
= & \; - p_H d_H I_H \rho_H - p_C d_C I_C \rho_C +  \left(\frac{r_H p_H \rho_H}{1+\alpha_H u_2} +  \frac{r_C p_C \rho_C}{1+\alpha_C u_2}\right)    \\
&\hspace{3.2cm} - \left(\mu_H p_H \rho_H + \mu_C p_C \rho_C\right) u_1 + \eta_1 (\theta_H \rho_H^0 - \rho_H) + \eta_2 (\rho_C - \gamma \rho_H),
\end{align*}
we have \par  
\textbf{1.} $p$, $p^0$, $\eta_1$, $\eta_2$ and the $\left(\nu_i\right)_{i=1,\ldots, M}$ are not all zero.  

\textbf{2.} The adjoint vector satisfies 
\begin{equation}
\label{Adjoint}
\begin{split} 
\dfrac{dp_H}{dt} & = - \frac{\partial H}{\partial \rho_H} = - p_H \left(-a_{HH} d_H \rho_H + R_H\right) + a_{CH} d_C p_C \rho_C + \eta_1 + \gamma \eta_2, \\
\dfrac{dp_C}{dt} & =  - \frac{\partial H}{\partial \rho_C} = - p_C \left(-a_{CC} d_C \rho_C +R_C\right) + a_{HC} d_H p_H \rho_H - \eta_2, \\
\end{split}
\end{equation}
with $p_H(t_f) =0$, $p_C(t_f)=p^0$. 

\textbf{3.} $t \longmapsto \eta_1(t)$ (resp. $t \longmapsto \eta_2(t)$) is continuous along (\ref{cont_H}) (resp. (\ref{cont_HC})), and is such that $\eta_1 (\theta_H \rho_H^0 - \rho_H)=0$ (resp. $\eta_2 (\rho_C - \gamma \rho_H)=0$) on $[0,t_f]$. 

\textbf{4.} For any $i=1,\ldots, M$, the Hamiltonian is continuous at $t_i$. If $t_i$ is a junction or contact\footnote{The starting and ending points of a boundary arc are called junction points if they are distinct, and contact points if they coincide (\textit{i.e.}, if the arc is reduced to a singleton).} point with the boundary (\ref{cont_H}) (resp. with the boundary (\ref{cont_HC})), $p_H(t_i^+) = p_H(t_i^-) + \nu_i$, $p_C(t_i^+) = p_C(t_i^-)$ (resp. $p_H(t_i^+) = p_H(t_i^-) + \gamma \nu_i$, $p_C(t_i^+) = p_C(t_i^-) - \nu_i$). 

\textbf{5.} The controls $u_1$, $u_2$ maximise the Hamiltonian almost everywhere. 

We now make several technical assumptions (for all admissible constant controls $(\bar u_1, \bar u_2)$) by requiring
\begin{equation}
\label{Hyp1}
\gamma < \frac{\mu_H}{\mu_C} \frac{\mu_C a_{HH} d_H - \mu_H a_{CH} d_C}{\mu_H a_{CC} d_C - \mu_C a_{HC} d_H}
\end{equation}
(assuming first $\mu_C a_{HH} d_H > \mu_H a_{CH} d_C$, $a_{CC} \mu_H  d_C >a_{HC} \mu_C  d_H$), 
\begin{equation}
\label{NoFullResistance}
\mu_H, \; \mu_C>0,
\end{equation}
\begin{equation}
\label{Hyp3}
 \alpha_H \mu_C r_H< \alpha_C \mu_H r_C, \; \alpha_H \mu_H r_C < \alpha_C \mu_C r_H
\end{equation}
\begin{equation}
\label{Hyp4}
(\alpha _C r_H \mu_C - \alpha_H r_C \mu_H) \left(u_2^{max}\right)^2 
+2 (r_H \mu_C - r_C \mu_H)u_2^{max}+\frac{\alpha_H r_H \mu_C - \alpha_C r_C \mu_H}{\alpha_H \alpha_C} < 0.  
\end{equation}
Note that the two last assumptions are satisfied as soon as $\frac{\alpha_H}{\alpha_C}$ is very small, at least compared to $\frac{\mu_H}{\mu_C}$. This amounts to saying that cytostatic drugs specifically target the cancer cells better than cytotoxic drugs do.

This last necessary condition motivates the definitions
\[
\phi_1:= \mu_H p_H \rho_H + \mu_C p_C \rho_C,
\] 
and (abusively, since this quantity also depends on $t$)
\[
\psi(u_2):= \frac{r_H p_H \rho_H}{1+\alpha_H u_2} +  \frac{r_C p_C \rho_C}{1+\alpha_C u_2}.
\] 

Let us first analyse a constrained arc on (\ref{cont_H}), whenever it is not reduced to a singleton.

\textit{Arc on the constraint (\ref{cont_H})}. \par
First note that $\rho_C =\frac{\rho_C}{\rho_H} \rho_H = \frac{\rho_C}{\rho_H}   \theta_H \rho_H^0$ is bounded from above by $\gamma \, \theta_H \rho_H^0$.
If we differentiate the constraint, we find that $u_1$ and $u_2$ are determined by
\begin{equation}
\label{Feedback1}
\frac{r_H}{1+\alpha_H u_2} - d_H(a_{HH} \theta_H \rho_H^0+ a_{HC} \rho_C) - u_1 \mu_H=0,
\end{equation}
together with the fact that
\begin{align*}
(u_1,u_2)  \in& \amax  \left(\frac{r_H p_H \rho_H}{1+\alpha_H u_2} +  \frac{r_C p_C \rho_C}{1+\alpha_C u_2} - \left(\mu_H p_H \rho_H + \mu_C p_C \rho_C\right) u_1 \right)\\
 = & \amax \frac{p_C \rho_C}{\mu_H} \left(\frac{r_C \mu_H}{1+\alpha_C u_2} -  \frac{r_H \mu_C }{1+\alpha_H u_2}\right) 
\end{align*}
One can check that (\ref{Hyp3}) and (\ref{Hyp4}) are sufficient conditions to have decrease of the function $u_2 \mapsto \frac{r_C \mu_H}{1+\alpha_C u_2} -  \frac{r_H \mu_C }{1+\alpha_H u_2}$. In particular, $u_2=u_2^{max}$ if $p_C<0$, $u_2 = 0$ if $p_C>0$. Thus, the maximisation condition is equivalent to maximising $-\phi_1 u_1$ if $p_C$ does not vanish on the arc. Hence, $\phi_1=0$ when this condition on $p_C$ is fulfilled. We also obtain $u_1$ in feedback form along the arc, and when $p_C$ does not vanish it is given by:
\[ u_1^{b,v} := \frac{1}{\mu_H} \left(\frac{r_H}{1+\alpha_H v} - d_H(a_{HH} \theta_H \rho_H^0 + a_{HC} \rho_C) \right)  \]
where $v=0$ or $v=u_2^{max}$ depending on the sign of $p_C$.
We assume that this is an admissible control, \textit{i.e.}, that it satisfies 
\begin{equation}
\label{Admissible1}
0<u_1^{b,v}<u_1^{max}
\end{equation}
for $v=0$ and $v=u_2^{max}$, and any $0 \leq \rho_C \leq \gamma \theta_H \rho_H^0$.
If $p_C>0$ and $u_2=0$, the dynamics of $\rho_C$ on the arc (\ref{cont_H}) are given by 
\begin{equation}
\label{LogisticODE}
\frac{d \rho_C}{dt} = \frac{1}{\mu_H} \left(r_b - d_b \rho_C\right) \rho_C
\end{equation}
with 
\[r_d:= \left(r_C \mu_H - r_H \mu_C\right)+ \left(a_{HH} d_H \mu_C - \mu_H a_{CH} d_C\right)\theta_H \rho_H^0,  \; d_b:= \left(a_{CC} \mu_H d_C - a_{HC} \mu_C d_H \right), \] which we assume to be positive. 
This autonomous ODE leads to a monotonic behaviour of $\rho_C$. In order to ensure that the boundary control $u_1=u_1^{b,0}$ is not enough to prevent the increase of $\rho_C$ we assume the following 
\begin{equation}
\label{BadMonotony}
\gamma \, \theta_H \rho_H^0< \frac{r_d}{b_d}.
\end{equation}
The previous hypothesis implies that $\rho_C$ will increase on an arc on (\ref{cont_H}) when $p_C>0$. \par
\textit{Arc on the constraint (\ref{cont_HC})}.  \par
If we differentiate the constraint, we find that $R_H = R_C$, \textit{i.e.}, $u_1$ and $u_2$ are related to one another by
\[  \frac{r_H}{1+\alpha_H u_2} - d_H \rho_H (a_{HH} + \gamma a_{HC}) - u_1 \mu_H=\frac{r_C}{1+\alpha_C u_2} - d_C \rho_H (\gamma a_{CC} + a_{CH}) - u_1 \mu_C.\]

We are now set to prove the result:
\medskip
\begin{proposition}
\label{SecondPhase}
Assume (\ref{AssumptionAlpha}), (\ref{OneDirac}), (\ref{Decreasing}), (\ref{OneOnly}), (\ref{Hyp1}), (\ref{NoFullResistance}), (\ref{Hyp3}), (\ref{Hyp4}), (\ref{Admissible1}), (\ref{BadMonotony}) and that there exists an optimal solution which is the concatenation of free and constrained arcs (either on the constraint (\ref{cont_H}) or (\ref{cont_HC})), with associated times $(t_i)_{1 \leq i \leq M}$. \par
Then, the last three possible arcs are: \par
$\bullet$ a boundary arc along the constraint (\ref{cont_HC}). \par
$\bullet$ a free arc with controls $u_1= u_1^{max}$ and $u_2= u_2^{max}$, \par 
$\bullet$ a boundary arc along the constraint (\ref{cont_H}) with $u_2=u_2^{max}$.
\end{proposition}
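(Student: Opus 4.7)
My approach is to apply the Pontryagin maximum principle stated just above and analyse the optimal trajectory backwards from $t=t_f$, starting from the transversality data $p_H(t_f)=0$, $p_C(t_f)=p^0\leq 0$. I first argue that the extremal is normal: if $p^0=0$ then $\phi_1(t_f)=0$ and $\psi(\cdot)|_{t_f}\equiv 0$, and combined with the non-triviality of $(p,\eta_1,\eta_2,(\nu_i))$ and the adjoint equations on the terminal arc this is incompatible with a nonzero set of multipliers. Normalising $p^0=-1$, one gets $\phi_1(t_f)=\mu_C p^0 \rho_C(t_f) <0$, and the $u_2$-dependence of $H$ at $t_f$ collapses to $\psi(u_2)|_{t_f}=\frac{r_C p^0 \rho_C(t_f)}{1+\alpha_C u_2}$, strictly increasing in $u_2$. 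By continuity of the adjoint on any free sub-arc ending at $t_f$, the pointwise maximisation immediately gives $u_1=u_1^{max}$ and $u_2=u_2^{max}$ there.

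\textbf{The final arc.} Under these MTD controls, assumption (\ref{Decreasing}) forces $\rho_H$ to decrease strictly, so either $[0,t_f]$ is entirely a free MTD arc (when $t_f$ is too small for the constraint (\ref{cont_H}) to activate), or the trajectory hits (\ref{cont_H}) and stays on it until $t_f$. On such a terminal boundary arc, $p_C$ is continuous across interior junctions (only $p_H$ jumps by $\nu_i\geq 0$), so $p_C(t_f)=p^0<0$ propagates backwards and gives $p_C<0$ along the whole arc. Eliminating $u_1=u_1^{b,u_2}$ via $R_H=0$, the reduced Hamiltonian depends on $u_2$ only through the factor $\frac{r_C\mu_H}{1+\alpha_C u_2}-\frac{r_H\mu_C}{1+\alpha_H u_2}$, which assumptions (\ref{Hyp3})--(\ref{Hyp4}) make strictly decreasing on $[0,u_2^{max}]$; multiplied by $p_C<0$, this forces $u_2=u_2^{max}$. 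The opposite sign $p_C>0$ would, via (\ref{LogisticODE}) and assumption (\ref{BadMonotony}), make $\rho_C$ strictly increase along this arc, contradicting optimality. Finally, a terminal boundary arc on (\ref{cont_HC}) is excluded because the saturation $R_H=R_C$ forced on it, together with the sign $p_C(t_f)<0$ and assumption (\ref{Decreasing}), admits no feasible choice of $(u_1,u_2)$.

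\textbf{The penultimate and earlier arcs.} Immediately to the left of the final (\ref{cont_H})-boundary arc, no other boundary arc can sit by (\ref{OneOnly}), so a free arc must lie there. Continuity of $p_C$ at the junction keeps $p_C<0$; the non-negative jump $p_H(t_i^+)-p_H(t_i^-)=\nu_i$ at entry into (\ref{cont_H}) together with the adjoint equations shows $\phi_1<0$ persists on a neighbourhood of the junction. Maximisation then yields $u_1=u_1^{max}$ and, by the same monotonicity of $\psi$, $u_2=u_2^{max}$: this is the free MTD arc. Moving once more backwards, the only admissible predecessor is a boundary arc on (\ref{cont_HC}), the alternatives being excluded by (\ref{Hyp1}), (\ref{Admissible1}) and (\ref{OneOnly}). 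The main technical difficulty throughout is the careful junction analysis—tracking the jumps $\nu_i$ on $p_H$ at entries into (\ref{cont_H}) and the coupled jumps on both adjoints at contacts with (\ref{cont_HC})—and the use of the quantitative assumptions (\ref{Hyp1})--(\ref{Hyp4}) to exclude any intermediate arc with non-extremal controls between the three identified pieces.
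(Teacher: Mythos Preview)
Your overall backward-from-$t_f$ strategy matches the paper's, and the analysis of the terminal boundary arc on (\ref{cont_H}) is essentially correct. But there are two genuine gaps that the paper handles with substantially more work.

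\textbf{The free MTD arc: the key computations are missing.} You claim that on the penultimate free arc, ``$\phi_1<0$ persists on a neighbourhood of the junction'' and then that maximisation yields $u_1=u_1^{\max}$ on the whole arc. A neighbourhood is not enough: you must show $\phi_1<0$ on the \emph{entire} free arc $(t_{M-2},t_{M-1})$. The paper does this by computing $\phi_1'$ at any hypothetical zero of $\phi_1$ and showing, using (\ref{Hyp1}) in the precise quantitative form (\ref{NoSwitch}), that $\phi_1'(t_0)>0$ whenever $\phi_1(t_0)=0$; combined with $\phi_1(t_{M-1}^-)\leq 0$ this excludes interior zeros. This is where (\ref{Hyp1}) is actually used, and you do not invoke it. More seriously, your claim that $u_2=u_2^{\max}$ follows ``by the same monotonicity of $\psi$'' is incorrect: at $t_f$ one has $p_H(t_f)=0$, so $\psi(u_2)=\frac{r_C p^0\rho_C}{1+\alpha_C u_2}$ is indeed monotone in $u_2$. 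On the free arc, however, $p_H>0$ and $p_C<0$, so the two terms in $\psi$ have opposite signs and $\psi$ is \emph{not} monotone in general. The paper instead studies the sign of $\psi'(u_2)$ via a quadratic $P(u_2)$, splits into cases on the sign of $\phi_{(\alpha_C r_H,\,\alpha_H r_C)}$, and uses (\ref{Hyp3}) and (\ref{Hyp4}) to force $P(u_2^{\max})<0$; this is exactly what those two assumptions are for, and they never enter your argument on this arc.

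\textbf{Normality.} Your sentence ``if $p^0=0$ then $\phi_1(t_f)=0$ and $\psi(\cdot)|_{t_f}\equiv 0$, and combined with the non-triviality \ldots\ this is incompatible with a nonzero set of multipliers'' is not a proof. With $p^0=0$ one still has the freedom of the jumps $\nu_i$, the multipliers $\eta_1,\eta_2$, and the adjoint on earlier arcs. The paper devotes a separate lemma to this: assuming $p^0=0$ it shows, arc by arc backwards and using (\ref{BadMonotony}) and the continuity of the Hamiltonian at junctions, that $p_H$, $p_C$, $\eta_1$ and all $\nu_i$ vanish identically, contradicting non-triviality. That backward induction is the substance of the normality argument and is absent from your proposal.
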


The proof is technical and can be found in Appendix \ref{AppC}.

\subsection{Solution of ({\bf OCP}) in $\cal{B}$$_T$ for large $T$: proof of Theorem \ref{SolveOCP}}

Recall that we want to solve ({\bf OCP}) for controls $(u_1,u_2) \in{\cal{B}}$$_T$ for large $T$ and small $T_2^{M}$, a choice motivated by the previous results.
For a given $T$, we denote $\left(\bar u_1^{(T)}, \bar u_2^{(T)}\right)$ a choice of optimal values for the constant controls during the first phase.

\begin{theorem}
\label{SolveOCP}
Assume the hypotheses of Proposition \ref{SecondPhase}.
Then asymptotically in $T$ and for $T_2^{M}$ small, there exists at least one solution to ({\bf OCP}) in $\mathcal{B}_T$. More precisely, there exists $\left(\bar u_1^{opt}, \bar u_2^{opt},T_2^{ode}\right)$, $\left(u_1^{ode},u_2^{ode}\right) \in BV\left(0,T_2^{ode}\right)$ such that if we define  the control $(u_1,u_2)$ by
 \begin{equation*}
(u_1,u_2)(t) = \left\{ \begin{array}{lcl}
 \left(\bar u_1^{opt}, \bar u_2^{opt}\right)& \textrm{on} &  \left(0,T-T_2^{ode}\right),\\
\left(u_1^{ode}(t-T+T_2^{ode}),u_2^{ode}(t-T+T_2^{ode})\right)& \textrm{on} & \left(T-T_2^{ode},T\right)
\end{array}\right.
\end{equation*}
then up to a subsequence we have
\[\lim_{T \rightarrow +\infty} \bigg(C_T(u_1,u_2)  - \inf_{(u_1, u_2)\in{\mathcal{B}_T}} C_T(u_1,u_2) \bigg)= 0,\]
meaning that $(u_1,u_2)$ is quasi-optimal if $T$ is large enough.
Furthermore, on $\left(T-T_2^{ode},T\right)$ the optimal trajectory trajectory obtained with $(u_1,u_2)$ is the concatenation of at most three arcs:

$\bullet$ a quasi-boundary arc along the constraint (\ref{cont_HC}), \par
$\bullet$ a free arc with controls $u_1= u_1^{max}$ and $u_2= u_2^{max}$, \par 
$\bullet$ a quasi-boundary arc along the constraint (\ref{cont_H}), with $u_2=u_2^{max}$.
\smallskip
\begin{remark}
By \textit{quasi-boundary arc}, we mean that the quasi-optimal control is such that $(\rho_H, \rho_C)$ almost saturates the constraints, \textit{i.e.}, up to an error vanishing as $T$ goes to $+\infty$.
\end{remark}
\end{theorem}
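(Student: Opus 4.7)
The strategy is to use Theorem~\ref{thm_asympt} and Lemma~\ref{Concentrated} to reduce the problem on $\mathcal{B}_T$ to a finite-dimensional ODE optimal control problem, and then to apply Proposition~\ref{SecondPhase} to obtain the three-arc structure on the short second phase.

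Any $(u_1,u_2)\in\mathcal{B}_T$ is parametrised by a triple $(\bar u_1,\bar u_2,T_2)\in[0,u_1^{max}]\times[0,u_2^{max}]\times[0,T_2^M]$ together with BV controls on $[T_1,T]$, where $T_1=T-T_2$. By Theorem~\ref{thm_asympt}, the constant controls $(\bar u_1,\bar u_2)$ on $[0,T_1]$ force $\rho_i(T_1)\to\rho_i^\infty(\bar u_1,\bar u_2)$ and make $n_i(T_1,\cdot)$ concentrate on $x_i^\infty(\bar u_1,\bar u_2)$ as $T_1\to+\infty$ (invoking assumption (\ref{OneDirac})). A careful inspection of the Gronwall argument in the proof of Lemma~\ref{Concentrated} shows that the bound
\[\sup_{[T_1,T]}\bigl(|\rho_H-\tilde\rho_H|+|\rho_C-\tilde\rho_C|\bigr)\;\xrightarrow[T_1\to+\infty]{}\;0\]
is uniform over all admissible BV controls on $[T_1,T]$, since its constants depend only on $u_1^{max}$, $u_2^{max}$ and on the concentration rate of $n_i(T_1,\cdot)$. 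Here $(\tilde\rho_H,\tilde\rho_C)$ solves the ODE at $(x_H^\infty,x_C^\infty)(\bar u_1,\bar u_2)$ starting from $(\rho_H^\infty,\rho_C^\infty)(\bar u_1,\bar u_2)$. Consequently
\[\inf_{\mathcal{B}_T} C_T \;\xrightarrow[T\to+\infty]{}\; \inf\bigl\{\tilde\rho_C(T_2):(\bar u_1,\bar u_2,T_2,u_1,u_2)\text{ admissible in the reduced ODE problem}\bigr\}.\]

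The reduced problem admits a minimiser: the outer variables $(\bar u_1,\bar u_2,T_2)$ range over a compact set, the map $(\bar u_1,\bar u_2)\mapsto(\rho_H^\infty,\rho_C^\infty,x_H^\infty,x_C^\infty)$ is continuous by the explicit formulas of Theorem~\ref{thm_asympt}, and for each outer triple the inner ODE optimal control problem is assumed (as in the statement of Proposition~\ref{SecondPhase}) to admit a minimiser whose trajectory is a concatenation of free and boundary arcs. Selecting an outer minimiser $(\bar u_1^{opt},\bar u_2^{opt},T_2^{ode})$ (a subsequence in $T$ is extracted because uniqueness need not hold) and an associated inner minimiser $(u_1^{ode},u_2^{ode})$, Proposition~\ref{SecondPhase} identifies the last three possible arcs as stated. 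The concatenated control of the statement then satisfies $C_T(u_1,u_2)\to\inf\tilde\rho_C(T_2^{ode})$ by a final application of the uniform reduction, which coincides with the limit of $\inf_{\mathcal{B}_T}C_T$; this is quasi-optimality. The \emph{quasi-boundary} qualification in the statement simply reflects the fact that the Gronwall error prevents exact saturation of (\ref{cont_HC}) or (\ref{cont_H}) at the IDE level, whereas the corresponding ODE trajectory does saturate these constraints.

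The main obstacle is the uniformity claim in the reduction step: without it one would obtain only $\limsup_T\inf_{\mathcal{B}_T}C_T\leq \inf_{\mathrm{ODE}}\tilde\rho_C(T_2)$ and could not conclude that the constructed $(u_1,u_2)$ is quasi-optimal. A secondary technical difficulty is that IDE-admissibility and ODE-admissibility differ by the Gronwall error, so one must either slightly enlarge the ODE feasible set and prove continuity of its value function with respect to such $T$-dependent perturbations, or else slightly shrink the constraints at the IDE level; both can be carried out under the non-degeneracy assumptions (\ref{OneDirac}), (\ref{Decreasing}) and (\ref{Hyp1})--(\ref{BadMonotony}), which guarantee that the optimal ODE structure is stable under small perturbations of the parameters and of the initial data.
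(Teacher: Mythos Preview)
Your proposal is correct in its overall strategy and is close to the paper's argument: reduce via Theorem~\ref{thm_asympt} and Lemma~\ref{Concentrated} to an ODE optimal control problem, then invoke Proposition~\ref{SecondPhase} for the three-arc structure. The paper's organisation differs in one point worth noting. You define $(\bar u_1^{opt},\bar u_2^{opt})$ \emph{a priori} as a minimiser of the limiting ODE problem and then build the quasi-optimal control from it; the paper instead takes, for each $T$, the optimal first-phase constants $(\bar u_1^{(T)},\bar u_2^{(T)})$ for the IDE problem in $\mathcal{B}_T$ and extracts a convergent subsequence by compactness of $[0,u_1^{max}]\times[0,u_2^{max}]$, defining $(\bar u_1^{opt},\bar u_2^{opt})$ as its limit. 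This is the genuine reason for the ``up to a subsequence'' in the statement, not the possible non-uniqueness of ODE minimisers you invoke. The paper's route has the advantage that IDE-admissibility of the first phase comes for free (the $(\bar u_1^{(T)},\bar u_2^{(T)})$ are admissible by construction), at the cost of needing a refinement of Lemma~\ref{Concentrated} since the first-phase controls now vary with $T$; the paper handles this by observing that $(x_H^{(T)},x_C^{(T)})\to(x_H^{opt},x_C^{opt})$ via the explicit formulas of Theorem~\ref{thm_asympt}. Your route avoids that refinement but must instead argue, as you acknowledge, that ODE-optimal parameters are also IDE-admissible up to a vanishing error. Both approaches work; the paper's is slightly more direct for the quasi-optimality conclusion, while yours is more explicit about the uniformity of the Gronwall bound and the admissibility gap---points the paper leaves implicit.
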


\begin{proof}
$ $\newline 
Up to a subsequence, still denoted $T$, we can find $\left(\bar u_1^{opt}, \bar u_2^{opt}\right)$ such that $\left(\bar u_1^{(T)}, \bar u_2^{(T)}\right)$ converges to $\left(\bar u_1^{opt}, \bar u_2^{opt}\right)$ as $T \rightarrow+\infty$. These values for the constant controls yield asymptotic phenotypes $\left(x_H^{opt}, x_C^{opt}\right)$ thanks to Theorem \ref{thm_asympt}.
Then, for any choice of time $T_2 \leq T_2^M$ and $BV$ controls $(u_1,u_2)$ on $(T-T_2,T)$,
\begin{equation}
\label{Concen}
\lim_{T \rightarrow +\infty} \sup_{\left[T-T_2,T\right]}\max\left(|\rho_H - \tilde{\rho}_H|, |\rho_C - \tilde{\rho}_C|\right) = 0,
\end{equation} 
with the notations of Lemma \ref{Concentrated}: $\rho$ is obtained from the IDE system, while $\tilde{\rho}$ is obtained from the ODE concentrated on $\left(x_H^{opt}, x_C^{opt}\right)$.
This is a consequence of a slight refinement of Lemma \ref{Concentrated}. Indeed, for $T$ large, the IDE is almost concentrated on some $\left(x_H^{(T)},x_C^{(T)}\right)$ associated to $\left(\bar u_1^{(T)}, \bar u_2^{(T)}\right)$. The formulae for these quantities given by Theorem \ref{thm_asympt} show that $\left(x_H^{(T)},x_C^{(T)}\right)$ converges to $\left(x_H^{opt}, x_C^{opt}\right)$, hence the concentration of the IDE on $\left(x_H^{opt}, x_C^{opt}\right)$ and the result (\ref{Concen}).

As a consequence, the optimal strategy for the ODE, obtained by Proposition \ref{SecondPhase} is also optimal for the IDE, up to an error vanishing as $T$ goes to infinity. We denote $T_2^{ode} \leq T_2^{M}$, $\left(u_1^{ode},u_2^{ode}\right) \in BV\left(0,T_2^{ode}\right)$ the solutions of this optimal control problem. The last statements of the theorem are then a direct consequence of Proposition \ref{SecondPhase} and the assumption that $T_2^M$ is small, since the IDE and ODE trajectories are arbitrarily close. 
\end{proof}

\section{Numerical simulations}
\label{Section4}
In this section, we solve ({\bf OCP}) numerically in the full class $\mathcal{A}_T$. We will compare the results with the previous section, and check that alternative strategies to the one given in Theorem \ref{SolveOCP} are indeed sub-optimal when $T$ is large.
\subsection{Numerical simulations of the solution to ({\bf OCP})}
For a survey on numerical methods in optimal control of ODEs, we refer to~\cite{Trelat2012}. \par 
Here, we use \textit{direct} methods which consist in discretising the whole problem and reducing it to a "standard" constrained optimisation problem.
The IDE structure is dealt with a discretisation in phenotype, which adds to the discretisation in time. The dimension of the resulting optimisation problem becomes larger as the discretisation becomes finer. This method is hence computationally demanding and its numerical implementation requires some care. It relies on combining automatic differentiation and the modelling language \texttt{AMPL} (see~\cite{Fourer2002}) with the expert optimisation routine \texttt{IpOpt} (see~\cite{Waechter2006}). Several different numerical tricks (hot start, numerical refinement, etc) were also needed. \par 

For the simulations, we take $\theta_{HC} = 0.4, \; \theta_H = 0.6, \; \epsilon=0.1.$ We let $T$ take the values $T=30$ and $T=60$. The results are reported on Figures \ref{fig_T30_100_60} and \ref{fig_T60_150_20} respectively. \par 

\begin{figure}[H]
\centering{
\includegraphics[height=9.7cm]{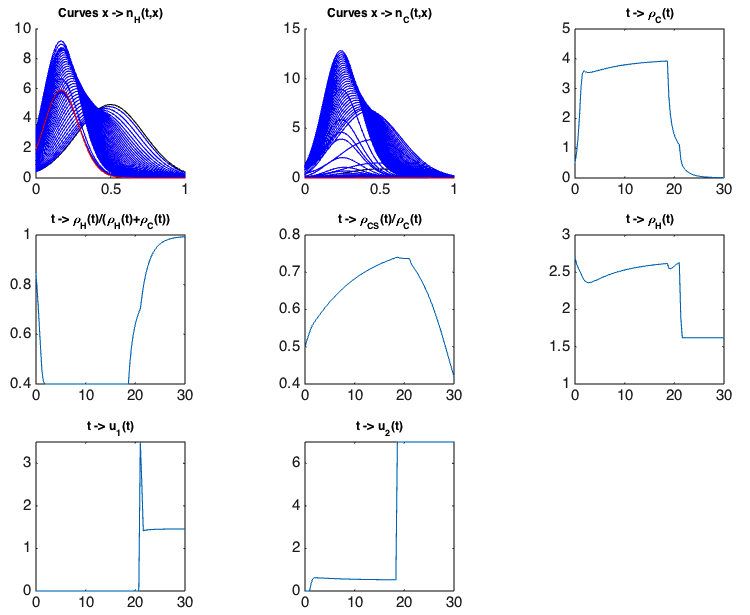}
}
\caption{Simulation of ({\bf OCP}) for $T=30$.} 
\label{fig_T30_100_60}
\end{figure}

\begin{figure}[H]
\centering{
\includegraphics[height=9.7cm]{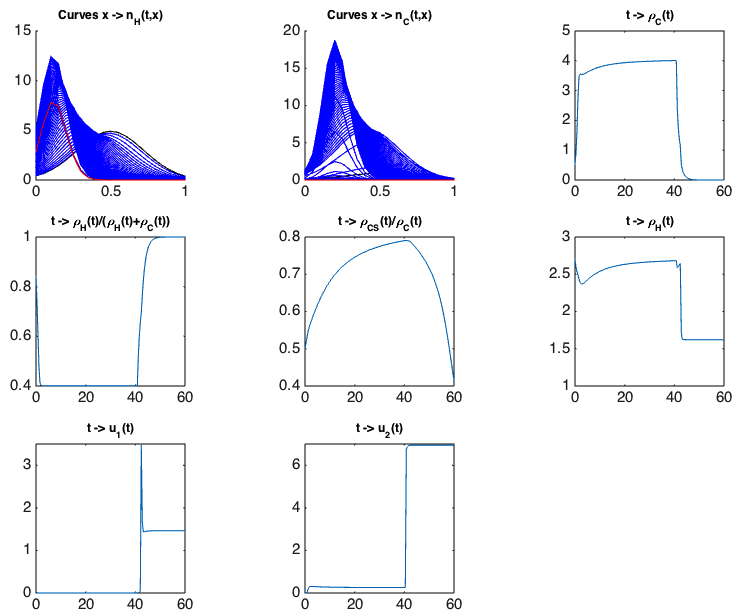}
}
\caption{Simulation of ({\bf OCP}) for $T=60$.}  
\label{fig_T60_150_20}
\end{figure}

These simulations clearly indicate that for the chosen numerical data, if $T$ is large enough, then the optimal controls are such that:
\begin{itemize}
\item the optimal control $u_1$ is first equal to $0$ on a long arc. Then, on a short-time arc, $u_1=u_1^{\max}$ and then to a value such that the constraint (\ref{cont_H}) saturates;
\item the optimal control $u_2$ has a three-part structure, with a long-time starting arc which is a \textit{boundary arc}, that is, an arc along which the state constraint \eqref{cont_HC} is (very quickly) saturated. It corresponds to an almost constant value for the control $u_2$. The last short-time arc coincides with that of $u_1$, and along this arc $u_2=u_2^{max}$.
\end{itemize}

We denote by $t_s(T)$ the switching time, defined by largest time such that 
$u_1(t) = 0$ for all $t<t_s(T)$.

According to the numerical simulations, as $T$ tends to $+\infty$, both $x\mapsto n_C(t_s(T),x)$, $x\mapsto n_H(t_s(T),x)$ converge to (weighted) Dirac masses. Since the controls $u_1$ and $u_2$ are almost constant on $(0,t_s(T))$, this is in accordance with Theorem \ref{thm_asympt}. The cancer cell population is then concentrated on a phenotype on which the drugs are very efficient.

More precisely, as $T$ tends to $+\infty$, the optimal strategy seems tends to a two-piece trajectory, consisting of:
\begin{itemize}
\item a first long-time arc, along the boundary $\frac{\rho_H(t)}{\rho_H(t)+\rho_C(t)}=\delta_{CH}$, with $u_1(t)=0$ and with a constant control $u_2$, at the end of which the populations of healthy and of cancer cells have concentrated on some given sensitive phenotype;
\item a second short-time arc along which the populations of healthy and cancer cells are very quickly decreasing. 
\end{itemize}
We also find that the mapping $T\longmapsto \rho_C(T)$ (where $\rho_C(T)$ is the value obtained by solving ({\bf OCP}) on $[0,T]$) is decreasing. This is because our parameters are such that, once concentrated on a sensitive phenotype, the cancer cell population satisfies a controlled ODE for which there exists a strategy letting $\rho_C$ converge to $0$. Because our model is exponential, we cannot reach $0$ exactly but for very small values of $\rho_C$, one can consider that the tumour has been eradicated.
\medskip

\begin{remark}
In order to avoid additional lengthy hypotheses, we did not give conditions under which the strategy established in Theorem \ref{SolveOCP} can further be identified. However, the numerical solutions show that, for generic parameters, it can be expected that: \par
$\bullet$ the constant controls on the first phase are such that at the end of the first phase, we have saturation of (\ref{cont_HC}), \par
$\bullet$ the second phase is of time duration $T_2^M$ and starts with a constrained arc along (\ref{cont_HC}). \par

\end{remark}

\subsection{Comparison with clinical settings} 
As explained before, our results advocate for a first long phase which must be all the more long for an initially heterogeneous tumour (with respect to resistance). They also apply to 'born to be bad' tumours~\cite{Sottoriva2015}, with high initial heterogeneity with respect to genes or phenotypes in general. Indeed, the heterogeneity or homogeneity we address here is related to one phenotype defined by resistance towards one category of cytotoxic drug. In this sense, our use of the term heterogeneity is unambiguous, functionally defined, and cannot be superimposed on other more classical uses, defined by the accumulation of mutations, such as in~\cite{Ding2012, Gerlinger2012, Sottoriva2015}. 
\par 
This being said, we are ultimately concerned with the application of our optimal control methods to the improvement of classical therapeutic regimens in which repeated courses of chemotherapy are delivered to patients with cancer. To this end, we keep the previous parameters, that are in particular relevant to represent an initially heterogeneous tumour, and we propose for possible implementation in the clinic a quasi-periodic strategy such as in the example defined below:
\begin{itemize}
\item
As long as $\frac{\rho_H}{\rho_H+\rho_C}\geq \theta_{HC}$, we follow the drug-holiday strategy by choosing $u_1=\bar u_1= 0$, $u_2=\bar u_2 = 0.5$ obtained in the previous numerical simulations.
\item Then, as long as $\rho_H > \theta_H \rho_H(0)$, we use the maximal amount of drugs. As soon as $\rho_H = \theta_H \rho_H(0)$, go back to the drug-holiday strategy.
\end{itemize}

The implementation is straightforward, Figure \ref{QuasiPeriodic} shows an example for $T=60$. This strategy allows to maintain the tumour size below some upper value and to prevent resistant cells from taking over the whole population. However, the tumour is not eradicated and this strategy is far from being optimal: $\rho_C(T)$ is slightly below $1$, to be compared to the value obtained with $T=60$ (see Figure\ref{fig_T60_150_20}) with the optimal strategy, which is around $1.10^{-5}$. It is another proof of the importance of a long first phase. It also shows that, at least with our parameters, the last arc on the constraint (\ref{cont_H}) obtained in the previous simulations is instrumental in view of significantly decreasing the tumour size. 
\begin{figure}[H]
\centering{
\includegraphics[height=10cm]{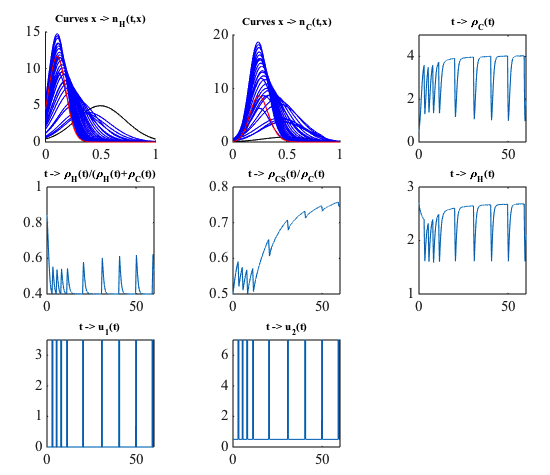}
}
\caption{Quasi-periodic strategy, for $T=60$.}
\label{QuasiPeriodic}
\end{figure}

To assess the importance of the saturation of the constraint $\rho_H = \theta_H \rho_H(0)$, we complement the previous strategy with an arc on this constraint, with $u_2=u_2^{max}$, and adequately chosen feedback control $u_1$ obtained from the equality $\frac{d\rho_H}{dt}=0$. We go back to the drug-holiday strategy as soon as $\rho_C$ starts increasing again, since it is a sign that the tumour has become too resistant. We choose $T=100$ to have enough cycles; the corresponding results are reported on Figure \ref{QuasiPeriodic2} below. They tend to show that $\rho_C$ can be brought arbitrarily close to $0$ after enough cycles, meaning that there is a chance for total eradication of the tumour.
\begin{figure}[H]
\centering{
\includegraphics[height=10cm]{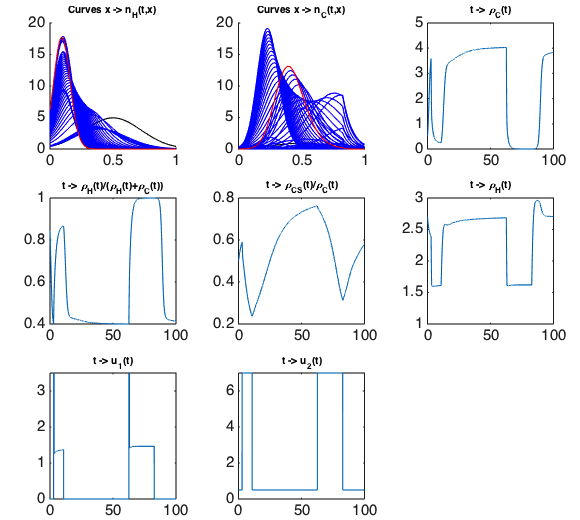}
}
\caption{Second quasi-periodic strategy, for $T=100$.}
\label{QuasiPeriodic2}
\end{figure}

\section{Conclusion}
\label{Section5}

\subsection{Summary of the results} 

By analysing a controlled integro-differential system of cancer and healthy cells structured by a resistance phenotype, we have mathematically investigated the effect of combined chemotherapeutic (cytotoxic and cytostatic) drugs on a tumour. Since we chose a biologically grounded modelling for the resistance phenomenon and took the healthy tissue into account, our approach is tailored for understanding and circumventing the two main pitfalls in cancer therapy: resistance to drugs and toxicity to healthy tissue. The goal of our analysis was indeed twofold: check that our model can reproduce the possible deleterious effect of chemotherapy when MTDs are used (the standard clinical strategy), and propose alternative (optimised) infusion protocols. \par 
Since MTD can first strongly reduce the size of the tumour which then starts growing again, we addressed the first question through an asymptotic analysis of the model. This was performed in Theorem \ref{thm_asympt}, which showed that both populations converge, while the cells concentrate on some phenotypes. This theorem extends results that so far were established only in the case of a single integro-differential equation, although the ideas are radically different because the usual technique (integration w.r.t. the phenotype to show convergence, and then relying on the exponential nature of convergence, to show concentration) does not work in our context. The proof of convergence and concentration, presented in Section \ref{Section2}, relies instead on a suitably defined Lyapunov function, whose analysis gives the speed of convergence and concentration. Interestingly, the approach could incorporate controls which are not only constant, but also asymptotically constant. \par
The rest of the article was then devoted to addressing the second question, by considering the optimal control problem ({\bf OCP}) of minimising the number of cancer cells on a given time interval $[0,T]$, keeping the tumour size in check and limiting damage to the healthy tissue. In Section \ref{Section3}, we gave several rigorous mathematical arguments to explain why, when $T$ is large, a good strategy is to first steer the cancer cell population on an appropriate phenotype by first giving constant doses for a long time. These arguments justified a restriction to a smaller class of controls for which we managed to identify an asymptotically optimal strategy in large time, presented in Theorem \ref{SolveOCP}. \par
In Section \ref{Section4}, we showed through numerical simulations that, when $T$ increases, the optimal solution is indeed increasingly close to a two-phase trajectory. The first very long phase consists in giving low doses of drugs in order to let the cancer cell population concentrate on a given sensitive phenotype. The doses are chosen as low as the constraint on the relative tumour size allows it. Our results advocate for a first long phase which must be all the more long for an initially heterogeneous tumour (with respect to resistance). During the second phase, we numerically recover the expected trajectory, given by Theorem \ref{SolveOCP}: high doses are given (MTD as long as the constraint on the healthy tissue does not saturate) and the cancer cell population quickly decreases. \par

\subsection{Possible generalisations} 
We have focused on a $1$ dimensional phenotype $x \in [0,1]$. In applications, however, it might be suitable to consider multi-dimensional phenotypes in order to account for the complexity of resistance. This is for example what is done in~\cite{Chisholm2015} where the relevant phenotype is $2$ dimensional and decided to be the combination of proliferation and survival potentials. With some technical adaptations, the results established in this paper generalise to any compact subset of $\mathbb{R}^d$, $d\geq1$. For the application one would need to specify how the functions depend (monotonically or not) on the various components of the phenotype. \par
A possible generalisation for our model is to take into account the fact that cells can change phenotype, for instance through (random) genetic \textit{mutations}, \textit{i.e.}, modifications of the DNA. These are irreversible and are passed from one cell to its daughter cells through division. However, it is now widely believed that such mutations are very rare with respect to the typical timescales that are of medical interest (which in the case of drug resistance phenomena are much shorter than the timescale of a human life, a time during which mutations certainly exist and can explain the development of diseases, see, e.g.~\cite{hirsch2016} about acute myeloid leukaemia), and thus they can be disregarded at least as a first approximation. In contrast, \textit{epimutations} (\textit{i.e.}, mechanisms which alter gene expression but not the DNA sequence base pairs themselves) are thought to be much more frequent~\cite{brown2014, shen2013, you2012}. \par
If epimutations are exclusively random, they can be modelled by a diffusion term, in which case (\ref{contsyst}) would be modified as follows:
\begin{equation}\label{contsyst2}
\begin{split}
\dfrac{\partial n_H}{\partial t} (t,x) &= R_H\left(x, \rho_H(t), \rho_C(t),u_1(t),u_2(t)\right) n_H(t,x) + \beta_H \dfrac{\partial^2 n_H}{\partial x^2} (t,x),
\\ \vspace{.8em}
\dfrac{\partial n_C}{\partial t} (t,x)& = 
R_C\left(x, \rho_C(t), \rho_H(t), u_1(t), u_2(t)\right) n_C(t,x) + \beta_C \dfrac{\partial^2 n_C}{\partial x^2} (t,x), 
\end{split}
\end{equation}
together with Neumann boudary conditions in $x=0$ and $x=1$. Here, $\beta_H$ and $\beta_C$ stand for the random epimutations rates of the healthy and cancer cell populations, respectively. \par


The Darwinian idea that the fittest individuals are selected exclusively because of random events affecting the genome or its expression has been recently challenged as observations on genomic evolution cannot be accounted for by sheer Darwinian mechanisms~\cite{Ling2015}, maybe also as ideas from Lamarck are regaining popularity. Such theories advocate the existence of adaptive behaviours: individuals actively adapt to their environment, seeking for phenotype changes that make them fitter. These can be seen as \textit{stress-induced epimutations} and can be mathematically modelled by an advection term, as in~\cite{Chisholm2015, Lorenzi2015}. This would lead to a model of the form 
\begin{equation}\label{contsyst3}
\begin{split}
\dfrac{\partial n_H}{\partial t} (t,x)  + \dfrac{\partial }{\partial x} (v_H\left(x,u_1(t),u_2(t))\, n_H(t,x)\right) &  \\
 = R_H\, (x,\rho_H(t)&, \rho_C(t),u_1(t),u_2(t)) \,n_H(t,x) + \beta_H \dfrac{\partial^2 n_H}{\partial x^2} (t,x), \\
\\ 
\dfrac{\partial n_C}{\partial t} (t,x)  + \dfrac{\partial }{\partial x} (v_C\left(x,u_1(t),u_2(t))\, n_C(t,x)\right) &  \\
 = R_C\, (x,\rho_C(t)&, \rho_H(t),u_1(t),u_2(t)) \,n_C(t,x) + \beta_C \dfrac{\partial^2 n_C}{\partial x^2} (t,x), 
\end{split}
\end{equation}
together with Neumann boudary conditions in $x=0$ and $x=1$. Here, $v_H$ and $v_C$ are the velocities with which healthy and cancer cells adapt to their environment, respectively, which are assumed to vanish in $x=0$ and $x=1$. Because we assume that the adaptation of cells is induced by the stress created by the drugs, $v_H$ and $v_C$ depend on $u_1$ and $u_2$. \par
Another extension could involve a mixed deterministic/stochastic framework, namely using a piecewise deterministic Markov process (PDMP~\cite{davis1984}, see~\cite{Renault2016} for the optimal control of this class of equations). In these models, mutations are stochastic jumps between deterministic (and phenotypically reversible) models, each jump becoming less and less rare in the course of phenotypic evolution in the deterministic processes. Furthermore, in these models, the probability of jump would depend exclusively on (and as an increasing function of) the phenotype structure variable, that would thus bear a quantitative meaning of malignancy, or phenotype plasticity entraining genetic instability (this last point is discussed with references in~\cite{Chisholm2016}). \par
A final extension should stem from the fact that tumours are also very heterogeneous in space (for example, because cells at the outer rim and cells at the centre of a tumour spheroid encompass very different metabolic conditions; more genenerally, high heterogeneity depending on space has been experimentally shown in solid tumours~\cite{Gerlinger2012, Sottoriva2015}, which should lead to also structure the populations of cells according to an added space variable. Another modelling advantage of such representation is that the interaction between the tumour and the healthy tissue is itself spatial, since part of it essentially happens at the boundary of the tumour, through direct contact. For possible cancer models taking both phenotype and space into account, we refer to~\cite{Jabin2016, Lorz2015, Mirrahimi2015}.

\subsection{Open problems}

{\bf Asymptotic analysis.} 
In this paper, we have extended well-known results for a single IDE to systems of IDEs. However, in applications it could be interesting to consider the case of general competitive of the form
\begin{equation}
\label{2x2Extended}
     \dfrac{\partial n_H}{\partial t} (t,x) = R_H \left(x, \rho_H, \rho_C\right) n_H(t,x),   \;
     \dfrac{\partial n_C}{\partial t} (t,x) = R_C \left(x, \rho_C, \rho_H\right) n_C(t,x),
\end{equation}
with $R_H$ decreasing in $\rho_C$, $R_C$ decreasing with $\rho_H$. Proving convergence and concentration for such systems is completely open. Indeed, our Lyapunov function is specifically suited to the specific linear setting of the model (\ref{contsyst}): it cannot be applied to any general competitive system. 
Note that some numerical simulations indicate that no oscillations occur, which may mean that $\rho_H$ and $\rho_C$ converge. \par
Similarly, characterising the asymptotic behaviour of PDE systems like (\ref{contsyst2}) and (\ref{contsyst3}) (even without the control terms) is an interesting and open problem, even for a single equation. Let us mention that when the rate of mutations is small, much can be found in the literature on the asymptotics of these models when this small parameter goes to $0$, after a proper rescaling of time~\cite{Barles2009, Lorz2011}. If this parameter is fixed, classical asymptotics for $t$ going to infinity have up to our knowledge not been carried out. 
\medskip

\noindent
{\bf Optimal control.}
In order to try and solve the optimal control problem ({\bf OCP}), we had to treat the question of the optimal control of IDEs. Although a PMP exists for such equations, we took another path because the resulting equations were too intricate. \par
A key idea to justify the restriction to a class of controls which are first constant on a long phase is to prove that the tumour (among all possible tumours of given size) which can be treated the most efficiently is homogeneous in phenotype, \textit{i.e.}, a Dirac mass in mathematical terms: the first long phase then aims at approaching this 'ideal' situation to start the second phase. We established the optimality of Dirac masses for a short time, but not for any time.

\par

If we want to analyse similar optimal control problems for (\ref{contsyst2}) or (\ref{contsyst3}), we shall have to deal with optimal control of PDEs for which techniques are very different~\cite{Coron2007,Tucsnak2009}. A first approach would be to focus on (\ref{contsyst2}), and see whether and how the optimal controls converge to the ones obtained in this paper as the rates of epimutations go to $0$.

\appendix 
\section{Proof of Lemma \ref{lem1}} 
\label{AppA}
\begin{proof}
We are going to prove that $\rho_C$ is a $BV$ function. To that end, let us prove that $\rho_C$ is bounded from above, and that it has integrable negative part. \par
\medskip
\textit{First step: upper bound for $\rho_C$.} \par
\noindent
The existence of such a bound comes from integrating the equation with respect to $x$: 
$$
\rho_C'(t) = \int_0^1 R_C\left(x,\rho_C, 0 \bar{u}_1, \bar{u}_2 \right) n_C(t,x) \, dx.
$$
If $\rho_C$ is too large, the right hand side is negative, forcing $\rho_C$ to decrease. It proves the claim on the upper bound for $\rho_C$. \par
Similarly, because of assumption (\ref{Positive}), $\rho_C$ increases if $\rho_C$ is too close to $0$: $\rho_C$ is bounded from below by some $\rho_C^{min}>0$. \par
\medskip
\textit{Second step: estimate on the negative part of $\rho_C$.} \par
\noindent
We define $q_C:=\dfrac{d\rho_C}{dt}$ and wish to prove that $(q_C)_{-}\in{L^1(0,+\infty)}$.
We differentiate $\dfrac{d\rho_C}{dt}=\int_0^1 n_C R_C$ to obtain:
\begin{equation*}
\dfrac{dq_C}{dt}=\int_0^1 n_C R_C^2 +\left( \int_0^1  n_C \frac{\partial R_C}{\partial \rho_C} \right) q_C
\end{equation*}
It provides an upper bound for the negative part of $q_C$:
\begin{align*}
\dfrac{d(q_C)_{-}}{dt}   \leq \left( \int_0^1 n_C \frac{\partial R_C}{\partial \rho_C} \right) (q_C)_{-}  \leq -a_{CC} d_C^{min} \rho_C^{min} (q_C)_{-} 
\end{align*}
where $0<d_C^{min}\leq d_C$ on $[0,1]$. 
We conclude that the negative part of $q_C$ vanishes exponentially (and consequently, is integrable over the half-line). Therefore, $\rho_C$ converges to some $\rho_C^\infty>0$.\par
\medskip
\textit{Third step: identification of $\rho_C^\infty$.} \par
\noindent
Now, we have
$$
n_C(t,x) = n_C^0(x)\exp \left( \left( \frac{r_C(x)}{1+\alpha_C\bar u_2} - \bar u_1 \mu_C(x) \right) t - d_C(x)a_{CC}\int_0^t\rho_C(s)\, ds \right)
$$
For large $t$, we have $\int_0^t\rho_C(s)\, ds\sim \rho_C^\infty t$ and hence the asymptotic behaviour depends on the function $b_C$ defined on $[0,1]$ by
$$
b_C(x) = \frac{r_C(x)}{1+\alpha_C\bar u_2} - \bar u_1 \mu_C(x) - d_C(x)a_{CC}\rho_C^\infty.
$$ 
Let $B_C\subset [0,1]$ be the set of points at which the function $b_C$ reaches its maximum. 

Let us prove that $b_C(x)=0$, for every $x\in B_C$.
We argue by contradiction. If $b_C(x)>0$ for some $x\in B_C$, then there exists a nontrivial interval $I\subset[0,1]$ along which $b_C$ is positive, and therefore $n_C(t,\cdot)$ takes larger and larger values along $I$ as $t$ increases. This contradicts the fact that $\rho_C(t)$ converges to $\rho_C^\infty$.
Similarly, if $b_C<0$ globally, $\rho_C$ converges to $0$, a contradiction. 

The function $b_C$ is thus nonpositive on $[0,1]$, and vanishes at any point of $B_C$.
The lemma follows easily.
\end{proof}

\section{Proofs for the simplified optimal control problems} 
\label{AppB}

\subsection{Proof of Lemma \ref{L1}}

\begin{proof}
Using the family $u_\epsilon$ defined in Remark \ref{Relaxed}, we obtain the corresponding $\rho_\epsilon(T)$, which can be computed exactly, as well as its limit. It is given by $$\rho_{opt}(T):= \rho_{opt}(T^{-}) \, \text{exp}(-\mu u^{1,max})$$ where $\rho_{opt}$ is the function obtained through $\ddt \rho_{opt}(t)=(r-d \rho_{opt}(t)) \rho_{opt}(t)$ for $t<T$, and $\rho_{opt}(0)=\rho_0$.\\
Now, let any $u$ satisfy \re{e.1_max}. The solution of (\ref{e.I}) with $u$ is thus a subsolution of that satisfied by $\rho_{opt}$, leading to $\rho \leq \rho_{opt}$ on $[0,T)$. Using $u\geq 0$, we also have $$\rho(T) \geq \rho_0 \, \text{exp} \left(\int_0^T (r - d \rho(s)) \, ds \right) \,  \text{exp}(-\mu u^{1,max}). $$
Since $\rho_{opt} (T^{-}) = \rho_0 \, \text{exp} \left(\int_0^T (r - d \rho_{opt}(s)) \, ds \right)$ and $\rho \leq \rho_{opt}$, this implies $\rho(T) \geq \rho_{opt}(T)$. \\
Let us now investigate the possible case of equality to prove that the infimum is not attained: the foregoing equality implies that there is equality if and only if $\int_0^T u\,ds = u^{1,max}$ (the contraint is saturated) and $\text{exp} \left(\int_0^T (r - d \rho(s)) \, ds\right)=\text{exp} \left(\int_0^T (r - d \rho_{opt}(s)) \, ds\right)$, whence $\rho \equiv \rho_{opt}$ on $[0,T)$. As $\rho$ is continuous, $\rho(T)$ would be given by taking $u\equiv 0$, which is not optimal.
\end{proof}

\subsection{Proof of Lemma \ref{Linfty}}

\begin{proof}
To account for the $L^1$ constraint \ref{c.1_max}, we augment the system by defining another state variable $y$, whose dynamics are given by $\frac{dy}{dt} =u$, leading to:
\begin{equation}
\label{AugmentedSystem}
\begin{split} 
\dfrac{d\rho}{dt} &  =(r-d \rho-\mu u) \rho, \; 
\dfrac{dy}{dt}  =u,\\
\rho(0) & =\rho_0,  \; \; \;\; \; \;\; \; \;\; \; \;\; \; \;\; \; \; \; 
y(0) =0.
\end{split}
\end{equation}
The constraint \ref{c.1_max} thus rewrites $y(T) \leq u^{1,max}$. \par

\noindent 
According to the Pontryagin maximum principle (see~\cite{Pontryagin1964}), there exist absolutely continuous adjoint variables $p_\rho$ and $p_y$ on $[0,T]$, and $p^0\leq 0$, such that: 
\begin{equation}
\label{Costate}
\dfrac{dp_\rho}{dt} = - \frac{\partial H}{\partial \rho} = - (r- 2 d \rho-\mu u) \, p_\rho, \;  
\dfrac{dp_y}{dt} = - \frac{\partial H}{\partial y} =0 
\end{equation}
where the Hamiltonian is $$H(\rho,y,p_\rho, p_y, u):= p_\rho \left(r-d\rho - \mu u\right) \rho + p_y u = \left(r - d\rho\right)p_\rho + u \left(p_y - \mu p_\rho \rho \right).$$  
Thus, $p_y$ is some constant, and $p_\rho$ does not change sign on $[0,T]$. \\
The maximisation of the Hamiltonian leads to defining the switching function $\phi:= p_y - \mu p_\rho \rho$. $u$ is thus equal to $u^{\infty,max}$ whenever $\phi>0$, equal to $0$ whenever $\phi<0$. \par
The transversality condition is that the vector
 $\begin{pmatrix}
  p_\rho \\
  p_y \\
 \end{pmatrix}(T)$
 $- p_0$ 
$\begin{pmatrix}
 1 \\
 0 \\
 \end{pmatrix}$ 
must be orthogonal to the tangent space of $\{ (p,y) \in{\mathbb{R}^2}, y \leq u_{1,max} \}$ at the point $\left( \rho(T), y(T) \right)$. \par

\textit{First case.} \par
If $y(T) < u^{1,max}$, then the transversality conditions imply $p_\rho(T)=p_0$ and $p_y \equiv 0$. $p^0\neq 0$ since otherwise we would have $(p_\rho, p_y, p^0) \equiv 0$. Thus, in this case, $p_\rho(T) < 0$ and $p_\rho$ is negative on the interval $[0,T]$. The switching function $\phi$ is therefore positive on the whole $[0,T]$, which would imply $u\equiv u^{\infty, max}$. This is a contradiction since a consequence is $\int_0^T u(s) \, ds = u^{\infty, max} T > u^{1,max}$.  \par

\textit{Second case.} \par
If $y(T) = u^{1,max}$, we still have $p_\rho(T)=p_0$. As in the first case, we cannot have $p_y=0$.\\
\noindent
Let us first remark that $\phi$ cannot be positive nor negative on the whole interval, since otherwise $u\equiv u^{\infty,max}$, a contradiction, or $u\equiv 0$, which is clearly not optimal. 
If $p_0=0$, $p_\rho \equiv 0$, so that $\phi$ has the sign of $p_y\neq 0$, a contradiction. Therefore, $p_\rho < 0$ on $[0,T]$ as before, and this implies $p_y<0$ to ensure that $\phi$ changes sign. 

\noindent
The derivative of $\phi$ is given by $\frac{d \phi}{d t} = -\mu d p_\rho \rho^2 >0$. Thus, $\phi$ is increasing and $u$ is bang-bang with one switching only. The fact that $y(T)=\int_0^Tu(s) \, ds= u^{1,max}$ imposes that this switching happens at $T_1(T)$ as announced, which ends the proof.
\end{proof}

\section{Proof of Proposition \ref{SecondPhase}}
\label{AppC}

\begin{proof}
If the constraint (\ref{cont_HC}) does not saturate on the whole $[0,t_f]$, we distinguish on whether the last arc is a free arc or a boundary arc on (\ref{cont_H}). 

\textit{First case: the last arc is a boundary arc on (\ref{cont_H}), not reduced to a singleton.}

In this case, $t_M = t_f$ and there can be a jump on the adjoint vector at $t_f$.

Let us start by proving the following: 
\begin{lemma}
$p^0<0$.
\end{lemma}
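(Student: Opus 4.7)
I argue by contradiction and suppose $p^0 = 0$, so that the transversality conditions yield $p_H(t_f) = p_C(t_f) = 0$. On the last arc, which is a nontrivial boundary arc along \eqref{cont_H}, assumption \eqref{OneOnly} forces $\eta_2 \equiv 0$, and assumption \eqref{Admissible1} ensures that the feedback control $u_1 = u_1^{b,v}$ lies strictly inside $(0, u_1^{max})$. Since a maximizer of $H$ in the interior of the admissible set must be a critical point, one obtains $\partial H / \partial u_1 \equiv 0$ along the arc, i.e.\ $\phi_1 = \mu_H p_H \rho_H + \mu_C p_C \rho_C \equiv 0$.

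Next, I use $\phi_1 = 0$ to eliminate $p_H = -\frac{\mu_C \rho_C}{\mu_H \rho_H}\, p_C$ and substitute into the adjoint equation \eqref{Adjoint} for $p_C$ (with $\eta_2 \equiv 0$). A short computation turns it into a scalar linear homogeneous ODE $\dot p_C = \alpha(t)\, p_C$ with a bounded coefficient depending only on the state. Combined with the terminal condition $p_C(t_f) = 0$, Cauchy uniqueness forces $p_C \equiv 0$ on the arc; hence $p_H \equiv 0$ by $\phi_1 = 0$, and then $\eta_1 \equiv 0$ because $R_H \equiv 0$ on the boundary arc reduces the adjoint equation for $p_H$ to $0 = \eta_1$. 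In particular $H \equiv 0$ on the last arc.

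To close the contradiction with the non-triviality of the PMP multipliers, I propagate this vanishing backward in time. Because the problem is autonomous with fixed $t_f$ and the state constraints are of first order, $H$ is constant on each arc and continuous across junction points, so $H \equiv 0$ throughout $[0, t_f]$. At the junction $t_{M-1}$ the continuity of $p_C$ gives $p_C(t_{M-1}^-) = 0$ and the jump relation gives $p_H(t_{M-1}^-) = -\nu_{M-1}$; substituting into $H(t_{M-1}^-) = 0$ yields $-\nu_{M-1}\, R_H(t_{M-1}^-)\, \rho_H = 0$, so that a non-tangential entry $R_H(t_{M-1}^-) \ne 0$ forces $\nu_{M-1} = 0$ and hence $p(t_{M-1}^-) = 0$. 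On the preceding arc (either free, or a boundary arc along \eqref{cont_HC} on which an analogous tangential-maximization argument eliminates $\eta_2$), the adjoint system becomes homogeneous linear, and Cauchy uniqueness propagates $p \equiv 0$ to that arc as well. Iterating through all junctions, $p \equiv 0$, $\eta_1 \equiv \eta_2 \equiv 0$, and every $\nu_i = 0$, which together with $p^0 = 0$ contradicts the non-triviality of the PMP multipliers.

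The main obstacle is precisely this global back-propagation: the local vanishing on the last arc is a direct consequence of $\phi_1 = 0$ and Cauchy uniqueness, but closing the contradiction requires a careful case analysis at each junction and the verification that entries to boundary arcs are non-tangential under the standing assumptions, so that Hamiltonian continuity is a genuine (rather than vacuous) constraint on the jump multipliers $\nu_i$.
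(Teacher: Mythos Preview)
Your plan is essentially the paper's proof: argue by contradiction with $p^0=0$, use $\phi_1\equiv 0$ on the last boundary arc (since $u_1^{b,v}$ is interior by \eqref{Admissible1}) to reduce the adjoint system to a homogeneous linear ODE for $p_C$ with $p_C(t_f^-)=0$, conclude $p\equiv 0$ and $\eta_1\equiv 0$ there, then propagate backward via Hamiltonian continuity and the jump conditions to contradict non-triviality. Two small remarks: the final time $t_f\le T_2^M$ is actually \emph{free}, and the paper uses this together with \eqref{BadMonotony} as an independent way to rule out the terminal jump $\nu_M>0$ (your $\phi_1=0$ argument already forces $p_H(t_f^-)=0$ since $p_C$ is continuous at junctions with \eqref{cont_H}, so this does not hurt you); and the non-tangentiality $R_H(t_{M-1}^-)\neq 0$ you flag as the main obstacle is obtained in the paper by assuming $\nu_{M-1}>0$, which forces $u_1=u_1^{\max}$ and $u_2=u_2^{\max}$ just left of $t_{M-1}$ and hence $R_H(t_{M-1}^-)<0$ by \eqref{Decreasing}, yielding the contradiction.
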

\begin{proof}
We argue by contradiction and assume $p^0=0$. 
We first look at the interval $[t_{M-1},t_f]$, and assume, also by contradiction, that $\nu_M >0$.
Then $p_H(t_f^-) = -\nu_M <0$, hence $p'_C(t_f^-) <0$, leading to $p_C>0$ in a right neighbourhood of $t_f$. From assumption \eqref{BadMonotony}, this means that $\rho_C$ decreases locally around $t_f$, a contradiction since $t_f$ is free (a better strategy would be to stop before $\rho_C$ starts increasing): $\nu_M = 0$. \par
Now, let us prove that $p_H$, $p_C$ and $\eta_1$ vanish identically on $[t_{M-1},t_f]$. If we have $p_C(t_0)>0$ (resp., $p_C(t_0)<0$) for some $t_0 \in [t_{M-1},t_f)$, we define the maximal interval $[t_0, t^\star)$ on which $p_C>0$ (resp., $p_C<0$), with $p_C(t^\star)=0$. In this case, we know that the switching function $\phi_1$ vanishes on $[t_0, t^\star]$, hence $p_H$ factorises with $p_C$. Coming back to the equation on $p_C$, we have $p'_C = \beta_C p_C$  on $(t_0,t^\star)$, for some function $\beta_C$. Since $p_C(t^\star) =0$, this imposes $p_C\equiv 0$ on the interval, a contradiction. Thus $p_C$ is identically $0$ on the whole $(t_{M-1}, t_f)$, and so are $p_H$ (from the equation on $p_C$) and $\eta_1$ (from the equation on $p_H$). \par
We now analyse the arc $[t_{M-2},t_{M-1}]$. From the previous step, we know that $\phi_1(t_{M-1})=0$. If $\nu_{M-1}>0$, then $\phi_1(t_{M-1}^{-})< \phi_1(t_{M-1}) = 0$, thus $u_1=u_1^{max}$ locally on the left of $t_{M-1}$. Similarly, maximising $\psi(u_2)$ imposes $u_2=u_2^{max}$.
Also, $H\left(t_{M-1}\right)=0$, and $H\left(t_{M-1}^{-} \right)= - \nu_{M-1} R_H\left(t_{M-1}^-\right) \rho_H(t_{M-1})$. By continuity of the Hamiltonian, we get $R_H\left(t_{M-1}^-\right)=0$. At the left of $t_{M-1}$, $u_1$ and $u_2$ saturate at their maximal values. At the right of $t_{M-1}$, $R_H=0$ but this imposes $u_1<u_1^{max}$ or $u_2<u_2^{max}$ since, owing to \eqref{Decreasing}, $\rho_H$ decreases for the maximal values. Thus, $0=R_H\left(t_{M-1}^-\right) < R_H(t_{M-1})=0$, a contradiction. Finally, we have proved $\nu_{M-1}=0$. \par
Standard Cauchy-Lispchitz arguments, together with the result $p_H(t_{M-1}^-)=p_C(t_{M-1}^-)=0$ yield that $p_H$ and $p_C$ are also identically null on the interval $[t_{M-2},t_{M-1}]$. 
Repeating these arguments on the whole $[0,t_f]$, we find that $p$, $p^0$, $\eta_1$, $\eta_2$ and the $\left(\nu_i\right)_{i=1,\ldots, M}$ are all zero, in contradiction with condition $1$ given by the PMP (see Section \ref{Section3}). 
\end{proof}
Thus $p^0<0$ and we set $p^0=-1$. This normalisation is allowed because the final adjoint vector $(p(t_f),p^0)$ is defined up to scaling.
Again, we start by analysing the PMP on $[t_{M-1},t_f]$.
From $p_C(t_f)<0$, we know that $u_2=u_2^{max}$ and $\phi_1=0$ locally around $t_f$. This implies $p_H>0$ also locally around $t_f$. In particular, $\nu_{M}=0$. Using the same reasoning as before with $p'_C = \beta_C p_C$, we get this time that $p_C$ and $p_H$ have constant sign on $(t_{M-1},t_f)$: $p_C<0$ and $p_H>0$. \par 
Let us now first assume $\nu_{M-1}>0$. Then $\phi_1(t_{M-1}^-)<0$, leading to $u_1=u_1^{max}$ close to $t_{M-1}$. If $\nu_{M-1}$ is such that $p_H \left(t_{M-1}^-\right) \leq 0$, then clearly the maximisation of $\psi(u_2)$ leads to $u_2 = u_2^{max}$. At $t_{M-1}$, we would thus have continuity of $u_2$ and not $u_1$ since $u_1<u_1^{max}$ on $[t_{M-1},t_f]$ from assumption \eqref{Admissible1}. In such a case, it holds true that there can be no jump on the adjoint vector (see for instance~\cite{Bonnard2003}), which is contradictory unless $\nu_{M-1}$ is such that $p_H \left(t_{M-1}^-\right) > 0$, which we assume from now on. \par
Let us now analyse the interval $[t_{M-2} , t_{M-1}]$, on which we will prove that $u_1=u_1^{max}$, $u_2=u_2^{max}$.
Because $\eta_1$ and $\eta_2$ vanish on such an interval, it is easy to prove from standard Cauchy-Lipschitz uniqueness arguments that $p_C<0$ and $p_H>0$ on $[t_{M-2} , t_{M-1}]$. 
Also, because of (\ref{Hyp1}) the inequality 
\begin{equation}
\label{NoSwitch}
\dfrac{\rho_C}{\rho_H} < \frac{\mu_H}{\mu_C} \frac{\mu_C a_{HH} d_H - \mu_H a_{CH} d_C}{\mu_H a_{CC} d_C - \mu_C a_{HC} d_H}
\end{equation}
is satisfied on $[0,t_f]$.
Let us prove that this implies $\phi_1<0$ on $(t_{M-2} , t_{M-1})$. For that purpose, we will prove that whenever $\phi_1(t_0)=0$, its derivative satisfies $\phi'_1(t_0)>0$. Note that we already know that $\phi_1(t_{M-1}^-) \leq \phi_1(t_{M-1}) = 0$.
For such a time $t_0$ we indeed obtain
\begin{align*}
 \phi'_1(t_{0}) = -\frac{\left(p_C \rho_C\right)(t_{0})}{\mu_H}  \Big[ \mu_H\big(  \mu_C a_{HH} d_H & - \mu_H a_{CH} d_C \big) \rho_H(t_{0})  \\
& - \mu_C \big( \mu_H a_{CC} d_C- \mu_C a_{HC} d_H \big) \rho_C(t_{0}) \Big].
\end{align*}
Combined with (\ref{NoSwitch}), this yields $\phi'_1(t_0)>0$, as announced. Thus $u_1=u_1^{max}$ on the whole $[t_{M-2} , t_{M-1}]$. 

For $u_2$, the proof is a bit more involved because the dependence is not linear. In what follows, we generically denote $\phi_{(\lambda_H, \, \lambda_C)} = \lambda_H p_H \rho_H + \lambda_C p_C \rho_C$ for positive constants $\lambda_H$, $\lambda_C$. With this notation the previous established result writes $\phi_{(\mu_H, \, \mu_C)} <0$ on $(t_{M-2} , t_{M-1})$.

We need to maximise $\psi(u_2)=\frac{r_H p_H \rho_H}{1+\alpha_H u_2} +  \frac{r_C p_C \rho_C}{1+\alpha_C u_2}$ as a function of $u_2$, whose derivative has the opposite sign of $P(u_2)$, where
\begin{align*}
P(u) := \alpha_H \alpha_C \phi_{(\alpha_C r_H,\,  \alpha_H r_C)} \, u^2 +2 (\alpha_H \alpha_C) \phi_{(r_H, \, r_C)} \,u
+ \phi_{(\alpha_H r_H, \, \alpha_C r_C)},
\end{align*}
which has discriminant $\Delta = - \alpha_H \alpha_C r_H p_H \rho_H r_C p_C \rho_C (\alpha_C - \alpha_H) ^2 > 0$ on $(0,t_f)$.
We consider two cases, depending on the sign of $\phi_{(\alpha_C r_H, \, \alpha_H r_C)}$. 
Note that (\ref{AssumptionAlpha}) implies the order $\phi_{(\alpha_H r_H, \, \alpha_C r_C)}<\phi_{(r_H, \, r_C)}< \phi_{(\alpha_C r_H, \, \alpha_H r_C)}$. From (\ref{Hyp3}) and $\phi_1<0$, $P(0) = \phi_{(\alpha_H r_H, \, \alpha_C r_C)}<0$.

Let us first assume $\phi_{(\alpha_C r_H, \, \alpha_H r_C)}<0$, in which case all the coefficients of the polynomial are negative. Let us denote $u_{+}$ the greater root of this polynomial. Since the coefficient in front of $u^2$ is negative, the function $\psi$ is increasing with $u_2$ on $(u_{+}, +\infty)$. We cannot have $u_{+}\geq 0$ because of the signs of the coefficients: $u_2^{max}$ maximises the function of interest. If $\phi_{(\alpha_C r_H, \, \alpha_H r_C)}=0$, it is easy to see that the same result holds. \par

%
%
%
%
Now, let us assume that $\phi_{(\alpha_C r_H, \, \alpha_H r_C)}>0$. Because $P(0)<0$, $P(u_2^{max})<0$ is a sufficient condition for $u_2^{max}$ to maximise $\psi(u_2)$. 
For any $\lambda_H>0$, $\lambda_C>0$, $\phi_1<0$ leads to $\phi_{(\lambda_H, \, \lambda_C)} < (\lambda_H \mu_C - \lambda_C \mu_H) \frac{p_H \rho_H}{\mu_C}$. Applying this to $P(u_2^{max})$, we find 
\begin{align*}
P(u_2^{max}) < \frac{p_H \rho_H}{\mu_C} \big( \alpha_H \alpha_C (\alpha _C r_H \mu_C - \alpha_H r_C \mu_H) \, \left(u_2^{max}\right)^2  & \hspace{1.8cm} \\
\hspace{1.8cm} +2 (\alpha_H \alpha_C) (r_H \mu_C - r_C \mu_H)\,u_2^{max}+&(\alpha_H r_H \mu_C - \alpha_C r_C \mu_H)  \big).
\end{align*}
We conclude that $P(u_2^{max})<0$ thanks to (\ref{Hyp4}). 
 
Thus, we have proved that, on $(t_{M-2}, t_{M-1})$, $u_1=u_1^{max}$ and $u_2=u_2^{max}$. Note that the result actually implies $\nu_{M-1} =0$. However the same reasoning with $\nu_{M-1}=0$ works and we obtain $u_1=u_1^{max}$ and $u_2=u_2^{max}$. From assumption (\ref{Decreasing}), $\frac{\rho_C}{\rho_H}$ increases backwards.
If this ratio reaches the value $\gamma$, \textit{i.e.}, if the system saturates the constraint (\ref{cont_HC}) (if not, $t_{M-2} = 0$), then we have a potential boundary arc on (\ref{cont_HC}) on $(t_{M-3}, t_{M-2})$. 
%
%

\textit{Second case: the last arc is a boundary arc on (\ref{cont_H}), reduced to a singleton.}

Note that, again, $t_M = t_f$. This case is handled as the previous one: $p^0$ cannot be $0$ and $\phi_1(t_f^-)\leq 0$. Because of this result, the whole reasoning made above in the previous case applies: there is an unconstrained arc with $u_1=u_1^{max}$ and $u_2^{max}$. If there is a previous arc, it is a constrained arc on (\ref{cont_HC}). 

\textit{Third case: the last arc is a free arc.} 

Again, the same kind of arguments are enough to prove that $p^0 < 0$, and $u_1=u_1^{max}$ and $u_2^{max}$ on this arc.
If there is a previous arc, it is a constrained arc on (\ref{cont_HC}). 
\end{proof}

{\footnotesize
\bibliography{GeneralCitation}
\bibliographystyle{acm}}

\end{document}